\newcommand{\inlineitem}[1][]{%
	\ifnum\enit@type=\tw@
	{\descriptionlabel{#1}}
	\hspace{\labelsep}%
	\else
	\ifnum\enit@type=\z@
	\refstepcounter{\@listctr}\fi
	\quad\@itemlabel\hspace{\labelsep}%
	\fi}
\theoremstyle{plain}
\newtheorem{theorem}{Theorem}[section]
\newtheorem{corollary}[theorem]{Corollary}
\newtheorem{proposition}[theorem]{Proposition}
\newtheorem{lemma}[theorem]{Lemma}
\theoremstyle{definition}
\newtheorem{definition}[theorem]{Definition}
\newtheorem{remark}[theorem]{Remark}
\newtheorem{example}[theorem]{Example}
\newcommand{\cT}{\mathcal T}
\newcommand{\bx}{\mathbf x}
\newcommand{\by}{\mathbf y}
\newcommand{\bu}{\mathbf u}
\newcommand{\be}{\mathbf e}
\newcommand{\ba}{\mathbf a}
\newcommand{\bv}{\mathbf v}
\newcommand{\RR}{\mathbb{R}}
\title[ ]{Strictly monotone sequences of lower and upper bounds on Perron values and their combinatorial applications}
\date{\today}
\author{Sooyeong Kim}
\address{Department of Energy, Systems, Territory and Construction Engineering, Universit\'{a} di Pisa, Pisa,
Italy}
\email{kswim2502@gmail.com}
\author{Minho Song}
\address{Applied Algebra and Optimization Research Center, Sungkyunkwan University, Suwon,
  South Korea}
\email{smh3227@skku.edu}
\begin{document}
\begin{abstract}
	In this paper, we present monotone sequences of lower and upper bounds on the Perron value of a nonngeative matrix, and we study their strict monotonicity. Using those sequences, we provide two combinatorial applications. One is to improve bounds on Perron values of rooted trees in combinatorial settings, in order to find characteristic sets of trees. The other is to generate log-concave and log-convex sequences through the monotone sequences.
\end{abstract}


\maketitle
\tableofcontents

\section{Introduction and preliminaries}

The \textit{Perron value} $\rho(A)$ of a square nonnegative matrix $A$, which is the spectral radius of $A$, together with a \textit{Perron vector}, which is an eigenvector of $A$ associated with $\rho(A)$, has been exploited and played important roles in many applications \cite{berman1994nonnegative, horn2012matrix,seneta2006non}. In particular, iterative analysis has contributed to approximating Perron values, how fast iterative methods converge, and so on \cite{sewell2014computational, hogben2013handbook, varga1962iterative}. In this article, rather than dealing with such generic questions in numerical analysis for Perron values, we concentrate our attention on sequences of lower and upper bounds on Perron values that are induced from particular iterative methods, which are presented in this section; and we investigate under what circumstances those sequences are strictly monotone. Using the sequences, we improve bounds on Perron values of bottleneck matrices for trees that are presented in \cite{andrade2017combinatorial, molitierno2018tight}. Furthermore, we show that the sequences with some extra conditions generate log-concave and log-convex sequences, so this can be used as a tool to see if some sequence is log-concave or log-convex.

To elaborate our aim of this article, we begin with some notation and terminologies, and then we present the sequences of lower and upper bounds on Perron values. Any bold-faced letter denotes a column vector, and all matrices are assumed to be real and square throughout this paper. A matrix is \textit{nonnegative} (resp. \textit{positive}) if all the entries are nonnegative (resp. positive). Analogous definitions for a nonnegative vector and a positive vector follow. Let $\RR^n_+$ be the set of all nonnegative vectors in $\RR^n$, and $\RR^n_{++}$ be the set of all positive vectors in $\RR^n$. We denote by $\mathbf{1}_k$ (resp. \( \mathbf{0}_k \)) the all ones vector (resp. the zero vector) of size $k$, and by $J_{p,q}$ (resp. \( \mathbf{O}_{p,q} \)) the all ones matrix (resp. the zero matrix) of size $p\times q$. If $k=p=q$, we write $J_{p,q}$ and $\mathbf{O}_{p,q}$ as $J_k$ and $\mathbf{O}_{k}$. The subscripts $k$ and a pair of $p$ and $q$ are omitted if their sizes are clear from the context. We denote by $\be_i$ the column vector whose component in $i^\text{th}$ position is $1$ and zeros elsewhere. For a vector \( \bx \), \( (\bx)_i \) denotes the \( i^{th} \) component of \( \bx \). A matrix $A$ is said to be \textit{reducible} if there exists a permutation matrix $P$ such that $PAP^T$ is a block upper triangular matrix. If $A$ is not reducible, then we say that $A$ is \textit{irreducible}. We say that a nonnegative matrix $A$ is \textit{primitive} if there exists a positive integer $N$ such that $A^N$ is positive. A symmetric matrix $A$ is said to be \textit{positive definite} (resp. \textit{positive semidefinite}) if all eigenvalues of $A$ are positive (resp. nonnegative).

We state two well-known results for bounds on Perron values. In particular, we adapt the Rayleigh--Ritz theorem as per our purpose. 

\begin{theorem}[The Collatz--Wielandt formula \cite{hogben2013handbook}]\label{CollatzWielandt}
	Let $A$ be an $n\times n$ irreducible nonnegative matrix. Then,
	\begin{align*}
		\rho(A)=\max_{\bx\in \mathbb{R}_{+}^n\backslash\{\mathbf{0}\}}\min_{\{i | (\bx)_i>0\}}\frac{\left(A\bx\right)_i}{\left(\bx\right)_i}=\min_{\by\in\mathbb{R}_{++}^n}\max_{i}\frac{\left(A\by\right)_i}{\left(\by\right)_i},
	\end{align*}
	This implies that for $\bx\in \mathbb{R}_{+}^n\backslash\{\mathbf{0}\}$ and $\by\in \mathbb{R}_{++}^n$, 
	\begin{align*}
		\min_{i}\frac{\left(A\bx\right)_i}{\left(\bx\right)_i}\leq \rho(A)\leq \max_{i}\frac{\left(A\by\right)_i}{\left(\by\right)_i}.
	\end{align*}
\end{theorem}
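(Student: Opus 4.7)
The plan is to reduce both equalities to the existence of a positive right Perron eigenvector $\bv$ satisfying $A\bv=\rho(A)\bv$ and a positive left Perron eigenvector $\mathbf{w}$ satisfying $\mathbf{w}^T A = \rho(A)\mathbf{w}^T$, both of which are guaranteed by the Perron--Frobenius theorem for irreducible nonnegative matrices. Since the two equalities are dual in structure, it suffices to describe the argument for the first one in detail and indicate the small adjustments needed for the second.

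For the first equality I would verify the ``$\ge$'' direction by plugging in $\bx=\bv$: each ratio $(A\bv)_i/(\bv)_i$ is exactly $\rho(A)$, so the inner minimum equals $\rho(A)$ and the outer maximum is at least $\rho(A)$. For the reverse direction, fix an arbitrary nonzero $\bx\in\RR^n_+$, set $r = \min_{\{i:(\bx)_i>0\}}(A\bx)_i/(\bx)_i$, and observe that $A\bx \ge r\bx$ componentwise: on coordinates with $(\bx)_i>0$ this is the definition of $r$, and on coordinates with $(\bx)_i=0$ it reduces to $(A\bx)_i\ge 0$, which holds because $A$ and $\bx$ are nonnegative. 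Pairing this inequality with the positive left eigenvector $\mathbf{w}$ gives
\[
\rho(A)\,\mathbf{w}^T\bx \;=\; \mathbf{w}^T A\bx \;\ge\; r\,\mathbf{w}^T\bx,
\]
and since $\mathbf{w}>\mathbf{0}$ and $\bx$ is nonzero in $\RR^n_+$, the scalar $\mathbf{w}^T\bx$ is strictly positive, so dividing yields $r\le\rho(A)$, as required.

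The second equality is handled symmetrically: the choice $\by=\bv$ gives $\max_i(A\bv)_i/(\bv)_i=\rho(A)$, while for an arbitrary $\by\in\RR^n_{++}$ setting $s := \max_i(A\by)_i/(\by)_i$ forces $A\by\le s\by$ componentwise, and pairing against $\mathbf{w}$ yields $\rho(A)\le s$. The ``This implies'' consequence in the statement is then immediate by specializing the outer max/min to a single test vector. The one genuine obstacle is the appeal to Perron--Frobenius to produce strictly positive left and right eigenvectors for irreducible $A$; once that is in hand, everything reduces to short bookkeeping with the two componentwise inequalities above.
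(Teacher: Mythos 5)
Your argument is correct. The paper itself does not prove this result; it is quoted from the handbook \cite{hogben2013handbook} as a classical theorem, so there is no in-text proof to compare against. Your route via the Perron--Frobenius theorem (using the strictly positive left eigenvector $\mathbf{w}$ to pass from the componentwise inequality $A\bx\ge r\bx$, respectively $A\by\le s\by$, to the scalar inequality $\rho(A)\ge r$, respectively $\rho(A)\le s$, after dividing by $\mathbf{w}^T\bx>0$) is a standard and complete proof of the Collatz--Wielandt characterization, and the ``this implies'' clause does follow by specializing the outer optimization to a single test vector.
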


\begin{theorem}[The Rayleigh--Ritz theorem \cite{hogben2013handbook}]\label{thm:min-max}
	Let \( A \) be an \( n\times n\) nonnegative symmetric matrix. Then,
	\begin{align*} 
	 \rho(A)=\max_{\bx\in\RR^n\backslash\{\mathbf{0}\}} \frac{\bx^TA\bx}{\bx^T\bx}.
	\end{align*}
\end{theorem}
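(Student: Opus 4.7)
The plan is to combine the spectral theorem for symmetric matrices with Perron--Frobenius theory. Since $A$ is symmetric, there is an orthonormal basis $\bv_1,\ldots,\bv_n$ of $\RR^n$ consisting of eigenvectors of $A$, with real eigenvalues $\lambda_1 \geq \cdots \geq \lambda_n$. First I would observe that because $A$ is additionally nonnegative, Perron--Frobenius theory guarantees that $\rho(A)$ is itself an eigenvalue of $A$; being a real, nonnegative eigenvalue of maximum modulus, it must coincide with $\lambda_1$.

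Next, for any nonzero $\bx \in \RR^n$, I would expand $\bx = \sum_{i=1}^n c_i \bv_i$ with $c_i = \bv_i^T \bx$. Using the orthonormality of the basis and the eigenvalue equations $A\bv_i = \lambda_i \bv_i$, a direct computation yields
\[
\frac{\bx^T A \bx}{\bx^T \bx} \;=\; \frac{\sum_{i=1}^n \lambda_i c_i^2}{\sum_{i=1}^n c_i^2},
\]
which is a convex combination of $\lambda_1,\ldots,\lambda_n$ and is therefore bounded above by $\lambda_1 = \rho(A)$. Equality is attained at $\bx = \bv_1$, so the supremum of the Rayleigh quotient equals $\rho(A)$ and is realized.

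There is no serious obstacle in this argument; the whole proof reduces to the spectral decomposition together with the Perron--Frobenius identification of $\rho(A)$ as the largest eigenvalue of a nonnegative symmetric matrix. The only point deserving a moment's thought is the role of the nonnegativity hypothesis: for a general symmetric matrix one has $\rho(A) = \max_i |\lambda_i|$, which can exceed $\max_i \lambda_i$ and hence the Rayleigh-quotient maximum. It is precisely the nonnegativity assumption that, via Perron--Frobenius, places $\rho(A)$ at the top of the real spectrum rather than potentially at the bottom, and so lets us write the statement as an equality between $\rho(A)$ and the maximum of the Rayleigh quotient.
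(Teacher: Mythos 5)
Your proof is correct. The paper states this as a cited background result (from the handbook reference) and gives no proof of its own, so there is no in-paper argument to compare against; but your argument is the standard and efficient one. You decompose the Rayleigh quotient via the orthonormal eigenbasis of the symmetric matrix $A$ to see that it is a convex combination of the real eigenvalues, hence bounded by $\lambda_1$, and you invoke Perron--Frobenius (in its form for general, possibly reducible, nonnegative matrices) to identify $\rho(A)$ with $\lambda_1$. Your closing remark is exactly the right thing to flag: without nonnegativity, $\rho(A)=\max(\lambda_1,-\lambda_n)$ could be $-\lambda_n$, and the stated equality would fail; nonnegativity is what forces $\rho(A)=\lambda_1$.

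One optional simplification worth noting: the Perron--Frobenius appeal can be replaced by an elementary estimate. If $\bu$ is a unit eigenvector for $\lambda_n$, then by the triangle inequality and $a_{ij}\ge 0$,
\[
|\lambda_n| = \bigl|\bu^T A \bu\bigr| \le |\bu|^T A\, |\bu| \le \lambda_1,
\]
where $|\bu|$ denotes the entrywise absolute value and the last inequality is the usual Rayleigh--Ritz bound for symmetric matrices. This shows $-\lambda_n\le\lambda_1$, hence $\rho(A)=\lambda_1$, without invoking Perron--Frobenius for reducible matrices. Either route is valid; yours is perfectly acceptable provided you allow the reducible form of Perron--Frobenius, as you implicitly do.
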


Now we introduce sequences of lower and uppper bounds on Perron values that we shall deal with in this article. Let $A$ be an $n\times n$ nonnegative matrix $A$ with $A\neq \mathbf{O}$. For integer $k\geq 1$, if $A$ is irreducible and $\bx\in\mathbb{R}_{++}^n$, we define $a_k(A,\bx):=\max_i\frac{(A^k\bx)_i}{(A^{k-1}\bx)_i}$ and $b_k(A,\bx):=\min_i\frac{(A^k\bx)_i}{(A^{k-1}\bx)_i}$; and if $A$ is positive semidefinite and $\bx\in\mathbb{R}_{+}^n$, then $c_k(A,\bx):=\frac{\bx^TA^k\bx}{\bx^TA^{k-1}\bx}$. Note that the conditions of \( A \) and \( \bx \) are different when defining \( a_k(A,\bx) \), \( b_k(A,\bx) \) and when defining \( c_k(A,\bx) \).

By \Cref{CollatzWielandt}, $(a_k(A,\bx))_{k\geq 1}$ and $(b_k(A,\bx))_{k\geq 1}$ are sequences of upper and lower bounds on $\rho(A)$, respectively; and we can find from \Cref{thm:min-max} that taking $\by=A^{\frac{k-1}{2}}\bx$ in $c_k(A,\bx)$, $(c_k(A,\bx))_{k\geq 1}$ is a sequence of lower bounds on $\rho(A)$. In addition to those sequences, we refer the reader to \cite{szyld1992sequence,tacscci1998sequence} for another sequences of lower and upper bounds on Perron values.

We review some known results regarding monotonicity and convergence for the three sequences.

\begin{theorem}\cite{varga1962iterative}\label{thm:known}
	Let $A$ be an $n\times n$ irreducible nonnegative matrix, and $\bx\in\mathbb{R}_{++}^n$. Then,
	$$b_1(A,\bx)\leq b_2(A,\bx)\leq\cdots\leq\rho(A)\leq\cdots\leq a_2(A,\bx)\leq a_1(A,\bx).$$
\end{theorem}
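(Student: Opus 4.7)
My plan is to derive both the bracket $b_k(A,\bx)\le\rho(A)\le a_k(A,\bx)$ and the monotonicity of the two sequences from a single observation: for every $k\ge 1$, the vector $A^{k-1}\bx$ lies in $\mathbb{R}_{++}^n$, so the Collatz--Wielandt formula (\Cref{CollatzWielandt}) can be applied to it, and the componentwise inequalities that encode $a_k$ and $b_k$ can be propagated one more step by left-multiplication by the nonnegative matrix $A$.

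First I would record the positivity of the iterates. Since $A$ is irreducible nonnegative, no row of $A$ is identically zero, so $A\by\in\mathbb{R}_{++}^n$ whenever $\by\in\mathbb{R}_{++}^n$; by induction $A^{k-1}\bx\in\mathbb{R}_{++}^n$ for every $k\ge 1$, which makes each ratio $(A^k\bx)_i/(A^{k-1}\bx)_i$ well-defined. Applying the two-sided Collatz--Wielandt bound of \Cref{CollatzWielandt} to the positive vector $A^{k-1}\bx$ immediately yields
\begin{equation*}
b_k(A,\bx)\;=\;\min_i\frac{(A\cdot A^{k-1}\bx)_i}{(A^{k-1}\bx)_i}\;\le\;\rho(A)\;\le\;\max_i\frac{(A\cdot A^{k-1}\bx)_i}{(A^{k-1}\bx)_i}\;=\;a_k(A,\bx),
\end{equation*}
so every term in the chain is sandwiched around $\rho(A)$.

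Next I would establish the monotonicity. The defining maximum gives the componentwise inequality $A^k\bx\le a_k(A,\bx)\,A^{k-1}\bx$; left-multiplying by the nonnegative matrix $A$ preserves the inequality, producing $A^{k+1}\bx\le a_k(A,\bx)\,A^k\bx$, and then dividing coordinatewise by the positive vector $A^k\bx$ and taking the maximum yields $a_{k+1}(A,\bx)\le a_k(A,\bx)$. The argument for $b_{k+1}(A,\bx)\ge b_k(A,\bx)$ is the mirror image: the minimum gives $A^k\bx\ge b_k(A,\bx)\,A^{k-1}\bx$, multiplying by $A$ preserves this, and taking the minimum of the resulting ratios gives the lower bound.

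There is no real obstacle here; the only thing to be careful about is the well-definedness of the ratios, which is why I would start by verifying $A^{k-1}\bx\in\mathbb{R}_{++}^n$. The rest is essentially a two-line application of Collatz--Wielandt combined with the elementary fact that nonnegative-matrix multiplication preserves coordinatewise inequalities between nonnegative vectors.
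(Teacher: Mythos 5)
The paper does not give its own proof of \Cref{thm:known}; it simply cites Varga's \emph{Iterative Analysis}. Your argument is correct and is the standard textbook proof: you verify that each iterate $A^{k-1}\bx$ stays in $\mathbb{R}_{++}^n$ (using that an irreducible nonnegative matrix has no zero row), apply the two-sided Collatz--Wielandt bound of \Cref{CollatzWielandt} to each such iterate to sandwich $\rho(A)$, and then obtain monotonicity by pushing the defining componentwise inequalities $b_k(A,\bx)\,A^{k-1}\bx \le A^k\bx \le a_k(A,\bx)\,A^{k-1}\bx$ forward through one more multiplication by the nonnegative matrix $A$. Both steps are sound, and the only potential pitfall---well-definedness of the ratios---is exactly the one you flag and dispose of at the outset. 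There is nothing to compare against in the paper, but your proof is complete and matches the classical argument one finds in Varga.
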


\begin{remark}\label{Remark:convergence}
	Let $A$ be irreducible and nonnegative. The sequences in Theorem \ref{thm:known} are not necessarily convergent to the Perron value. A sufficient condition for the convergence of \( a_k(A,\bx) \) and \( b_k(A,\bx) \) is that \( A \) is primitive. By the Perron--Frobenius theorem, if $A$ is primitive, then $\rho(A)$ is greater in absolute value than all other eigenvalues of $A$. Hence it follows from \cite[Theorem 3.5.1]{sewell2014computational} that $(a_k(A,\bx))_{k\geq 1}$ and $(b_k(A,\bx))_{k\geq 1}$ converge to the Perron value as $k\rightarrow\infty$.
\end{remark}

\begin{remark}\label{Remark:convergence2}
	Let $A$ be nonnegative and positive semidefinite. From the power method, one can find that $(c_k(A,\bx))_{k\geq 1}$ converges to $\rho(A)$ as $k\rightarrow\infty$.
\end{remark}

Here we describe the aim of this paper with its motivations. Our approach in \Cref{Sec2:sequences} is informed by the following motivation: in \cite{andrade2017combinatorial}, the so-called \textit{combinatorial Perron value}, which is a lower bound on ``the Perron value of a rooted (unweighted) tree'', may be used to estimate ``characteristic sets'' of trees, which will be explained in \Cref{subsec 1.1}. Not only the combinatorial Perron value, but also other bounds may be used for the estimation, and even sharper bounds improve the accuracy of where the characteristic set is. This leads in \Cref{Sec2:sequences} to explore the strict monotonicity of $(a_k(A,\bx))_{k\geq 1}$, $(b_k(A,\bx))_{k\geq 1}$, and $(c_k(A,\bx))_{k\geq 1}$ that produce sharper bounds than the combinatorial Perron value and other bounds in \cite{andrade2017combinatorial}, which will be shown in Subsection \ref{Subsec:3.1}. Furthermore, those strictly monotone sequences may be used for solving problems concerning bounds on Perron values of nonnegative matrices (especially, combinatorial matrices), only with small powers of the matrices. In Subsection \ref{Subsec:3.1}, we improve the bound in \cite{molitierno2018tight} in order to show its capability.

The other motivation for \Cref{Sec2:sequences} is that the monotonicity of $(a_k(A,\bx))_{k\geq 1}$ and $(b_k(A,\bx))_{k\geq 1}$ with some additional conditions and the monotonicity of $(c_k(A,\bx))_{k\geq 1}$ enable us to obtain log-concave and log-convex sequences, which will be elaborated in Subsection \ref{subsec 1.2}. This can be a tool of proving if a given sequence is log-concave or log-convex, by checking if the sequnce corresponds to one of the three sequences. In \Cref{Sec2:sequences}, we study under what circumstance $(a_k(A,\bx))_{k\geq 1}$ and $(b_k(A,\bx))_{k\geq 1}$ generate log-concave and log-convex sequences. Using our findings, we present combinatorial sequences in Subsection \ref{Subsec:3.2}.

The structure of this paper is as follows. Subsections \ref{subsec 1.1} and \ref{subsec 1.2} contain introduction and necessary background for combinatorial applications in Subsections \ref{Subsec:3.1} and \ref{Subsec:3.2}, respectively. \Cref{Sec2:sequences} provides the results stated above, which are used in Subsections \ref{Subsec:3.1} and \ref{Subsec:3.2}. For the readability of this article, we put in \Cref{appendix} parts of proofs for some results in Subsection \ref{Subsec:3.1} that contain tedious calculations and basic techniques.


\subsection{Combinatorial application I}\label{subsec 1.1}

In this subsection, we aim to understand why we shall study the strict monotonicity of $(a_k(A,\bx))_{k\geq 1}$, $(b_k(A,\bx))_{k\geq 1}$, and $(c_k(A,\bx))_{k\geq 1}$. We assume familiarity with basic material on graph theory. We refer the reader to \cite{chartrand1996graphs} for necessary background. All graphs are assumed to be simple and undirected.

Given a weighted, connected graph $G$ on vertices $1,\dots, n$, the \textit{Laplacian matrix} of $G$ is the $n\times n$ matrix given by $L(G)=[l_{i,j}]$, where $l_{i,j}$ is the weight on edge joining $i$ and $j$ if $i$ and $j$ are adjacent, $l_{i,i}$ is the degree of vertex $i$, and $l_{i,j}=0$ for the remaining entries. The \textit{algebraic connectivity} $a(G)$ of $G$ is the second smallest eigenvalue of $L(G)$, and its corresponding vector is called a \textit{Fiedler vector} of $G$. As the name suggests, this parameter is related to other parameters in terms of connectivity of graphs \cite{fiedler1973algebraic}. For a vertex $v$ of $G$, the \textit{bottleneck matrix $M$ at $v$} is the inverse of the matrix obtained from $L(G)$ by removing the row and column indexed by $v$. If $G$ is an unweighted tree, then the $(i,j)$-entry of $M$ is the number of edges which are simultaneously on the path from $i$ to $v$ and the path from $j$ to $v$; and if $G$ is a weighted tree or other case, we refer the interested reader to \cite{kirkland1996characteristic, kirkland1997distances} for the combinatorial interpretation of the entries of $M$. Suppose that $C_1,\dots,C_k$ are the components of the graph obtained from $G$ by removing $v$ and all incident edges, for some $k\geq 1$ (if $k\geq 2$, then $v$ is called a \textit{cut-vertex}). For $i=1,\dots,n$, we refer to the inverse of the principal submatrix of $L(G)$ corresponding to the vertices of $C_i$, as the \textit{bottleneck matrix for $C_i$}. It is known in \cite{fallat1998extremizing} that $M$ can be expressed as a block diagonal matrix in which the main diagonal blocks consist of the bottleneck matrices for $C_1,\dots,C_k$, which are symmetric and positive matrices. The \textit{Perron value of $C_i$} is defined as the Perron value of the bottleneck matrix for $C_i$. Then, the Peron value of $M$ is determined by the maximum among Perron values of $C_1,\dots,C_k$. We say that $C_i$ is a \textit{Perron component at $v$} if its Perron value is the maximum among Perron values of $C_1,\dots,C_k$. In the context of trees, regarding bottleneck matrices and their Perron values, the word ``component'' is conventionally replaced by ``branch'' so that the related terminologies above will be adapted appropriately; for instance, Perron components at $v$ in a tree are referred to as \textit{Perron branches} at $v$.

As described in the earlier part of this subsection, we now consider unweighted trees instead of weighted and connected graphs. (Some results in terms of weighted and connected graphs will be provided in \Cref{Subsec:3.1}.) In order to understand the characteristic set of a tree, we shall elaborate two characterizations of trees according to Fiedler vectors, and according to Perron branches at particular vertices. (Generalized characterizations for weighted and connected graphs can be found in \cite{fiedler1975property,kirkland1998perron,molitierno2016applications}.)

Let $\cT$ be a tree, and let $\bx$ be its Fiedler vector. It appears in \cite{fiedler1975property} that exactly one of the following cases occurs:
\begin{enumerate}[label=(\roman*)]
	\item\label{typeII} No entry of $\bx$ is zero. Then, there exist unique vertices $i$ and $j$ in $\cT$ such that $i$ and $j$ are adjacent with $x_i>0$ and $x_j<0$. Further, the entries of $\bx$ corresponding to vertices along any path in $\cT$ which starts at $i$ and does not contain $j$ are increasing, while the entries of $\bx$ corresponding to vertices along any path in $\cT$ which starts at $j$ and does not contain $i$ are decreasing. 
	\item\label{typeI} There is a zero entry in $\bx$. For this case, the subgraph induced by the set of vertices corresponding to $0$'s in $\bx$ is connected. Moreover, there is a unique vertex $i$ such that $x_i=0$ and $i$ is adjacent to at least one vertex $j$ with $x_j\neq 0$. The entries of $\bx$ corresponding to vertices along any path in $\cT$ which starts at $i$ are either increasing, decreasing, or identically $0$.
\end{enumerate}
Trees corresponding to \ref{typeI} are said to be \textit{Type I} and the vertex $i$ described in \ref{typeI} is called the \textit{characteristic vertex}; and trees corresponding to \ref{typeII} are said to be \textit{Type II} and the vertices $i$ and $j$ described in \ref{typeII} are called the \textit{characteristic vertices}. We say that the \textit{characteristic set} of a tree is the set of its characteristic vertices. As shown in \cite{merris1987characteristic}, the characteristic set of a tree is independent of the choice of a Fiedler vector. In \cite{andrade2017combinatorial,kirkland1998perron}, the authors regard the characteristic set as a notion of ``middle'' of a tree in the rough sense that the farther away a vertex is from the characteristic set, the larger its corresponding entry in a Fiedler vector is in absolute value. Indeed, the authors of \cite{abreu2017characteristic} studied characteristic set with other notions of middle of a tree---the distances between a centroid of a tree and its characteristic vertices, and between a centre, as a standard notion in graph theory, and its characteristic vertices; so they show that ratios of those maximum distances taken over all trees on $n$ vertices to $n$ are convergent as $n\rightarrow\infty$. For instance, the maximum distance between centroids and characteristic vertices taken over all trees on $n$ vertices asymptotically equals around $0.1129n$---that is, one may expect that the distance between a centroid of a tree on $n$ vertices and its characteristic vertex is less than $0.1129n$.

In order to find the characteristic set of a tree, one may use a Fielder vector by observing its sign patterns. As an alternative, one may use the following characterization in \cite{kirkland1996characteristic} that describes a connection between the algebraic connectivity and Perron branches at characteristic vertices. A tree $\cT$ is Type I if and only if there exists a unique vertex $v$ in $\cT$ such that there are two or more Perron branches $B_1,\dots,B_k$ at $v$ for some $k\geq 2$. For this case, 
\begin{align*}
	a(G)=\frac{1}{\rho(M_i)}
\end{align*}
where $M_i$ is the bottleneck matrix for $B_i$ for $i=1,\dots,k$. A tree $\cT$ is Type II if and only if there exist unique adjacent vertices $i$ and $j$ in $\cT$ such that the branch at $j$ containing $i$ is the unique Perron branch at $j$, and the branch at $i$ containing $j$ is the unique Perron branch at $i$. In this case, there exists $0< \gamma <1$ such that 
\begin{align*}
	a(G)=\frac{1}{\rho(M_1-\gamma J)}=\frac{1}{\rho(M_2-(1-\gamma) J)},
\end{align*}
where $M_1$ (resp. $M_2$) is the bottleneck matrix for the branch at $j$ containing $i$ (resp. at $i$ containing $j$). As suggested in \cite{abreu2017characteristic}, we may estimate characteristic sets of trees through bounds on Perron values of branches---for that, combinatorial Perron value was introduced in that paper. That is, if Perron values of branches at some vertex in a tree are understood well, then one may decide whether it belongs to the characteristic set from the characterization; further, if the algebraic connectivity is also known, then one may find at which vertices Perron branches have their Perron values close to the reciprocal of the algebraic connectivity. 

When it comes to finding the characteristic set through Perron branches and their Perron values, we need to understand Perron values of branches at some vertex $v$. Such branches may be identified as rooted trees, by considering the vertex of a branch adjacent to $v$ as the root. Henceforth, we shall focus on rooted trees instead of branches at some vertex in a tree. Given a rooted tree with root $x$, let us consider the tree $\cT$ formed by adding a new pendent vertex $v$ to $x$. We shall define the \textit{bottleneck matrix $M$ of the rooted tree} as the bottleneck matrix at $v$ in $\cT$, see Figure \ref{Fig:illustration} for an example. Since the rooted tree is unweighted, the $(i,j)$-entry of $M$ is the number of vertices (not edges) which are simultaneously on the path from $i$ to $x$ and on the path from $j$ to $x$. We also define the \textit{Perron value of the rooted tree} to be the Perron value of the bottleneck matrix of the rooted tree. This convention also appears in \cite{andrade2017combinatorial,ciardo2021perron}.
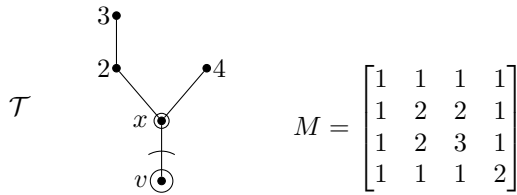
\begin{figure}
	\begin{center}
			\begin{tikzpicture}
				\tikzset{enclosed/.style={draw, circle, inner sep=0pt, minimum size=.10cm, fill=black}}
				
				\node[enclosed, label={left, yshift=0cm: $v$}] (v_0) at (0,1) {};
				\node[enclosed, label={left, yshift=0cm: $x$}] (v_1) at (0,1.8) {};
				\node[enclosed, label={left, xshift=0.1cm: $2$}] (v_2) at (-.6,2.5) {};
				\node[enclosed, label={right, xshift=-.1cm: $4$}] (v_3) at (.6,2.5) {};
				\node[enclosed, label={left, xshift=.1cm: $3$}] (v_4) at (-.6,3.2) {};
				\node[label={left: $\cT$}] (T) at (-1.5,2) {};
				\node[label={right: \( M=\begin{bmatrix}
					1 & 1 & 1 & 1 \\
					1 & 2 & 2 & 1\\
					1 & 2 & 3 & 1 \\
					1 & 1 & 1 & 2 
				\end{bmatrix}\)}] (N) at (1.5,1.7) {};

				\draw (v_0) -- (v_1);
				\draw (v_1) -- (v_2);
				\draw (v_1) -- (v_3);
				\draw (v_2) -- (v_4);

				\path[draw=black] (v_0) circle[radius=0.15];
				\path[draw=black] (0.175,1.35) arc (60:120:0.35);
				\path[draw=black] (v_1) circle[radius=0.1];
				\end{tikzpicture}
	\end{center}
	\caption{An illustration of the bottleneck matrix of a rooted tree. The matrix $M$ is the bottleneck matrix for the branch at $v$ in $\cT$, and $M$ is also the bottleneck matrix of a rooted tree with vertex set $\{x,2,3,4\}$ and root $x$.}\label{Fig:illustration}
\end{figure}

\subsection{Combinatorial application II}\label{subsec 1.2}

A sequence $(z_n)_{n\geq 1}$ is \textit{log-concave} (resp. \textit{log-convex}) if $z_{n-1}z_{n+1}\leq z_n^2$ (resp. $z_{n-1}z_{n+1}\geq z_n^2$) for $n\geq 2$; and if the inequality is strict, $(z_n)_{n\geq 1}$ is \textit{strictly log-concave} (resp. \textit{strictly log-convex}). We refer the reader to \cite{brenti1989unimodal,huh2018combinatorial,stanley1989log} for an introduction and applications. The paper \cite{gross2015log} deals with a conjecture arising in topological graph theory that the genus distribution of every graph is log-concave. In \cite{liu2007log}, one can find operations preserving log-convexity and conditions for a sequence to be log-concave concerning recurrence relations. For the log-concavity of symmetric functions, see \cite{sagan1992log}.

We shall provide a systematic way of generating (strictly) log-concave and (strictly) log-convex sequences from the sequences $(a_k(A,\bx))_{k\geq 1}$, $(b_k(A,\bx))_{k\geq 1}$, and $(c_k(A,\bx))_{k\geq 1}$.

\begin{definition}\label{def:log index}
	Let $A$ be an $n\times n$ irreducible nonnegative matrix and $\bx\in\RR_{++}^n$. For $i_0\in\{1,\dots,n\}$, we say that $i_0$ is a \textit{log-concavity (resp. log-convexity) index of $A$ associated with $\bx$}, or equivalently that $A$ has a \textit{log-concavity (resp. log-convexity) index $i_0$ associated with $\bx$}, if there exists a positive number $K$ such that for $k\geq K$,
	\begin{align*}
		\frac{(A^k\bx)_{i_0}}{(A^{k-1}\bx)_{i_0}}=\max_{i}\frac{(A^k\bx)_{i}}{(A^{k-1}\bx)_{i}},\; \left(\text{resp.} \frac{(A^k\bx)_{i_0}}{(A^{k-1}\bx)_{i_0}}=\min_{i}\frac{(A^k\bx)_{i}}{(A^{k-1}\bx)_{i}}\right).
	\end{align*}
\end{definition}

From the following proposition, we can see that the names ``log-concavity index'' and ``log-convexity index''  suggest log-concave and log-convex sequences, respectively. 


\begin{proposition}\label{Prop:logconcavity index}
	Let $A$ be an $n\times n$ irreducible nonnegative matrix and $\bx\in\RR_{++}^n$. Then, the following hold:
	\begin{enumerate}[label=(\roman*)]
		\item Suppose that $i_1$ is a log-concavity index of $A$ associated with $\bx$. Let $g_k=(A^k\bx)_{i_1}$ for $k\geq 0$. Then, there exists some $k_1>0$ such that $(g_k)_{k\geq k_1-1}$ is log-concave. Moreover, if the sequence $(a_k(A,\bx))_{k\geq k_1}$ is strictly decreasing, then $(g_k)_{k\geq k_1-1}$ is strictly log-concave.
  		
		\item Suppose that $i_2$ is a log-convexity index of $A$ associated with $\bx$. Let $h_k=(A^k\bx)_{i_2}$ for $k\geq 0$. Then, there exists some $k_2>0$ such that $(h_k)_{k\geq k_2-1}$ is log-convex. Moreover, if the sequence $(b_k(A,\bx))_{k\geq k_2}$ is strictly increasing, then $(h_k)_{k\geq k_2-1}$ is strictly log-convex.
	\end{enumerate}
\end{proposition}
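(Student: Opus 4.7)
The plan is to translate the log-concavity (resp.\ log-convexity) index condition into a direct identification of the ratios $g_k/g_{k-1}$ (resp.\ $h_k/h_{k-1}$) with the monotone sequence $a_k(A,\bx)$ (resp.\ $b_k(A,\bx)$) for $k$ sufficiently large, and then invoke \Cref{thm:known}.

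Before anything else, I would note that since $A$ is irreducible and nonnegative no row of $A$ can be identically zero (otherwise the corresponding row of every power $A^k$ would be zero, contradicting irreducibility), so every row sum is positive and $A\by \in \RR_{++}^n$ whenever $\by \in \RR_{++}^n$. Iterating, $A^k\bx \in \RR_{++}^n$ for every $k \ge 0$, hence each $g_k$ and $h_k$ is strictly positive. This makes all ratios well-defined and legitimises cross-multiplication without sign changes.

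For part (i), let $K$ be the threshold promised by \Cref{def:log index} applied to $i_1$ and set $k_1 := K$. For every $k \ge k_1$ both indices $k$ and $k+1$ are at least $K$, so the identities
$$\frac{g_k}{g_{k-1}} \;=\; a_k(A,\bx), \qquad \frac{g_{k+1}}{g_k} \;=\; a_{k+1}(A,\bx)$$
hold simultaneously. By \Cref{thm:known}, $a_{k+1}(A,\bx) \le a_k(A,\bx)$, and therefore $g_{k+1}/g_k \le g_k/g_{k-1}$, i.e.\ $g_{k-1}g_{k+1} \le g_k^2$ for every $k \ge k_1$. This is exactly log-concavity of $(g_k)_{k \ge k_1 - 1}$. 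If moreover $(a_k(A,\bx))_{k \ge k_1}$ is strictly decreasing, each of the above inequalities becomes strict, yielding strict log-concavity.

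Part (ii) proceeds in exactly the parallel way: with $k_2$ the analogous threshold attached to $i_2$, one obtains $h_k/h_{k-1} = b_k(A,\bx)$ and $h_{k+1}/h_k = b_{k+1}(A,\bx)$ for $k \ge k_2$, and the fact that $(b_k(A,\bx))$ is non-decreasing (resp.\ strictly increasing from index $k_2$) by \Cref{thm:known} gives $h_{k-1}h_{k+1} \ge h_k^2$ (resp.\ with strict inequality). I do not expect any genuine obstacle here; the entire argument is bookkeeping on top of \Cref{thm:known}, and the only subtlety worth flagging is that the threshold $k_1$ (resp.\ $k_2$) must be chosen large enough for \emph{both} ratios appearing in the log-concavity (resp.\ log-convexity) test to be simultaneously identifiable with the maximum (resp.\ minimum), which is precisely why one takes $k_1 = K$ (so that $k - 1 \ge K - 1$ still allows $g_k/g_{k-1} = a_k$ at $k = k_1$) rather than $K - 1$.
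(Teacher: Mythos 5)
Your proof is correct and follows essentially the same route as the paper: identify the ratios $g_k/g_{k-1}$ with $a_k(A,\bx)$ (resp.\ $h_k/h_{k-1}$ with $b_k(A,\bx)$) for $k \ge k_1$ and then invoke \Cref{thm:known}. Your preliminary remark on $A^k\bx \in \RR_{++}^n$ and your bookkeeping on the index shift ($(g_k)_{k\ge k_1-1}$ rather than $(g_k)_{k\ge k_1}$) are in fact slightly more careful than the paper's own writeup, which states the weaker range $(g_k)_{k\ge k_1}$; but these are cosmetic differences, not a new approach.
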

\begin{proof}
	Suppose that $i_1$ is a log-concavity index of $A$ associated with $\bx$. Then, there exists some $k_1>0$ such that $\frac{g_{k}}{g_{k-1}}=a_k(A,\bx)$ for $k\geq k_1$. By \Cref{thm:known}, $(a_k(A,\bx))_{k\geq 1}$ is decreasing, so $\frac{g_{k+1}}{g_{k}}\leq \frac{g_{k}}{g_{k-1}}$ for $k\geq k_1$. Hence, $(g_k)_{k\geq k_1}$ is log-concave. Moreover, if the sequence $(a_k(A,\bx))_{k\geq k_1}$ is strictly decreasing, then $(g_k)_{k\geq k_1}$ is strictly log-concave. From a similar argument, one can establish the remaining conclusions.
\end{proof}


\begin{proposition}\label{prop:log-concave from c_k}
	Let $A$ be an $n\times n$ nonnegative, positive semidefinite matrix and $\bx\in\RR_{+}^n$. Let $s_k=\bx^TA^k\bx$ for $k\geq 0$. If the sequence $(c_k(A,\bx))_{k\geq 1}$ is (strictly) increasing, then $(s_k)_{k\geq 0}$ is (strictly) log-convex.
\end{proposition}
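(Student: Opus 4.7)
The plan is to observe that $c_k(A,\bx)$ unpacks directly to the ratio $s_k/s_{k-1}$, so that monotonicity of $(c_k(A,\bx))_{k\geq 1}$ is literally the log-convexity inequality for $(s_k)_{k\geq 0}$. First I would rewrite the definition as $c_k(A,\bx) = \bx^T A^k \bx / \bx^T A^{k-1}\bx = s_k/s_{k-1}$, which is well-defined precisely when $s_{k-1}>0$. The hypothesis that $(c_k(A,\bx))_{k\geq 1}$ is (strictly) increasing then reads $s_k/s_{k-1} \leq s_{k+1}/s_k$ (strict) for every $k\geq 1$, and cross-multiplying gives $s_{k-1}s_{k+1} \geq s_k^2$ (strict), which is exactly the (strict) log-convexity of $(s_k)_{k\geq 0}$.

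The only point needing a brief comment is positivity, since cross-multiplying requires $s_{k-1},s_k>0$. Because $A$ is symmetric positive semidefinite and $\bx\in\RR_+^n$, diagonalizing $A$ and writing $s_k=\sum_i d_i^k y_i^2$ (with $d_i\geq 0$ the eigenvalues and $\by$ the coordinates of $\bx$ in the eigenbasis) shows $s_k\geq 0$ for all $k$, and moreover the condition $s_{k-1}>0$ is forced for every $k$ for which $c_k(A,\bx)$ is defined in the first place. In the degenerate case where some $s_k$ vanishes, one has $A\bx=\mathbf{0}$ and hence $s_j=0$ for all $j\geq 1$, so log-convexity reduces to $0\leq 0$ and the claim is trivial.

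There is no real obstacle: the statement is essentially a restatement of the definitions, with the only subtlety being the positivity of denominators, which is automatic from the hypothesis that the $c_k(A,\bx)$ are a well-defined sequence of real numbers.
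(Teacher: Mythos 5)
Your proof is correct and follows the only natural route: unwinding $c_k(A,\bx)=s_k/s_{k-1}$ so that monotonicity of $(c_k(A,\bx))_{k\geq 1}$ is precisely the cross-multiplied log-convexity inequality for $(s_k)_{k\geq 0}$. The paper states this proposition without a written proof, evidently regarding it as immediate by the same manipulation used for \Cref{Prop:logconcavity index}, and your extra remarks on the positivity of the $s_k$ (needed to cross-multiply) and on the degenerate case $A\bx=\mathbf{0}$ are sound and fill in details the paper leaves implicit.
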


We remark that given a sequence $(x_k)_{k\geq 1}$, if there exist some nonnegative matrix and nonnegative vector so that one of $(a_k(A,\bx))_{k\geq 1}$, $(b_k(A,\bx))_{k\geq 1}$, and $(c_k(A,\bx))_{k\geq 1}$ generates $(x_k)_{k\geq 1}$ by one of \Cref{Prop:logconcavity index,prop:log-concave from c_k}, then $(x_k)_{k\geq 1}$ is log-concave or log-convex. 

We examine under what circumstances the sufficient conditions of \Cref{Prop:logconcavity index,prop:log-concave from c_k} are satisfied in \Cref{Sec2:sequences}, and so related results are presented in \Cref{thm:logconcave seq,Thm:log-concave from c_k}.

\section{Strictly monotone sequences of lower and upper bounds on Perron values}\label{Sec2:sequences}

Our main goal of this section is to find a condition for an irreducible nonnegative matrix to have a log-concavity or log-convexity index associated with a positive vector (\Cref{prop:log index}), in order to improve \Cref{Prop:logconcavity index}, and to find conditions for sequences $(a_k(A,\bx))_{k\geq 1}$, $(b_k(A,\bx))_{k\geq 1}$ and $(c_k(A,\bx))_{k\geq 1}$ to be strictly monotone. Specifically speaking of the latter, we explore conditions on an $n\times n$ irreducible nonnegative matrix $A$ and $\bx\in\RR_{++}^n$ such that the corresponding sequences $(a_k(A,\bx))_{k\geq 1}$ and $(b_k(A,\bx))_{k\geq 1}$ are strictly monotone (Theorems \ref{thm:UpperMonocity} and \ref{thm:UpperMonocity2}), and we find conditions on an $n\times n$ nonnegative, positive semidefinite matrix $A$ and $\bx\in\RR_{+}^n$ such that the corresponding sequence $(c_k(A,\bx))_{k\geq 1}$ is strictly increasing (Theorem \ref{Cor:LowerMonocity}).

\subsection{Log-concavity and log-convexity indices}

We begin with the following proposition, which is used for obtaining \Cref{lem:index domnates others in two vectors}.

\begin{proposition}\label{prop:index domnates others in two vectors}
	Let $n\geq 2$. Let $\bx=(x_1,\dots,x_n)^T\in\RR_{++}^n$, and $\by=(y_1,\dots,y_n)^T\in\RR^n$. Suppose that either $\by\in\RR_{++}^n$ or $-\by\in\RR_{++}^n$. Then, 
	\begin{enumerate}[label=(\roman*)]
		\item\label{statement1} there exists $i_1\in\{1,\dots,n\}$ such that for all $j_1\in\{1,\dots, n\}$,
		\begin{align*}
		\det\begin{bmatrix}
		x_{i_1} & y_{i_1}\\
		x_{j_1} & y_{j_1}
		\end{bmatrix}\geq 0;
		\end{align*}
		\item\label{statement2} there exists $i_2\in\{1,\dots,n\}$ such that for all $j_2\in\{1,\dots, n\}$,
		\begin{align*}
		\det\begin{bmatrix}
		x_{i_2} & y_{i_2}\\
		x_{j_2} & y_{j_2}
		\end{bmatrix}\leq 0.
		\end{align*}
	\end{enumerate}
\end{proposition}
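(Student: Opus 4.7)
The plan is to reduce both parts to comparing the ratios $\lambda_i := y_i/x_i$. Since $\bx \in \RR_{++}^n$, each $x_i > 0$, so every $\lambda_i$ is well-defined. The basic identity I will exploit is
\begin{align*}
\det\begin{bmatrix} x_i & y_i \\ x_j & y_j \end{bmatrix} = x_i y_j - x_j y_i = x_i x_j\,(\lambda_j - \lambda_i),
\end{align*}
which makes the determinant's sign coincide with the sign of $\lambda_j - \lambda_i$, because the prefactor $x_i x_j$ is strictly positive.

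For part \ref{statement1} I would choose $i_1$ to be any index attaining $\min_i \lambda_i$. Then for every $j \in \{1,\dots,n\}$ we have $\lambda_j \geq \lambda_{i_1}$, and the identity above gives $x_{i_1}y_j - x_j y_{i_1} = x_{i_1} x_j (\lambda_j - \lambda_{i_1}) \geq 0$, as required. For part \ref{statement2} I would instead pick $i_2 \in \arg\max_i \lambda_i$ and run exactly the same argument with the inequality reversed.

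There is essentially no obstacle: the only input that matters is the positivity of $\bx$, which allows dividing the determinantal expression by $x_i x_j$. I would remark in passing that the hypothesis on $\by$ (that either $\by$ or $-\by$ lies in $\RR_{++}^n$) is not used in this existence statement; presumably it is imposed because the proposition is applied in later results where $\by$ arises naturally as a vector of one definite sign (e.g., a vector of the form $A^k \bx - \rho \, A^{k-1}\bx$ when comparing consecutive Collatz--Wielandt bounds), and keeping the hypothesis here streamlines the later invocations.
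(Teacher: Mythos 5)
Your proof is correct, and it takes a genuinely different and simpler route than the paper. The paper proves Proposition~\ref{prop:index domnates others in two vectors} by induction on $n$: it applies the induction hypothesis to the last $n-1$ coordinates to get a ``provisional winner'' $k_1$, then compares index $1$ against $k_1$ and uses the strict positivity of $\by$ (or of $-\by$) to multiply two determinant inequalities and cancel, essentially establishing a transitivity property of the relation $x_i y_j - y_i x_j \geq 0$. The sign hypothesis on $\by$ is therefore load-bearing in the paper's argument. Your argument bypasses all of this: dividing by $x_i x_j > 0$ reduces the determinant comparison to a total order on the ratios $\lambda_i = y_i/x_i$, and you simply take the extremal index. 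This needs no induction, no case analysis, and — as you correctly observe — no hypothesis on $\by$ at all. In fact your argument proves the more general Lemma~\ref{lem:index domnates others in two vectors} directly, which the paper instead deduces from Proposition~\ref{prop:index domnates others in two vectors} by a further three-way case split on the signs of the $y_i$; with your approach both the proposition and that subsequent derivation become unnecessary. The one small inaccuracy is your speculation about why the hypothesis on $\by$ appears: it is not there to streamline later invocations, but because the paper's own inductive proof uses it to keep the relevant products strictly positive.
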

\begin{proof}
	Suppose that $\by\in\RR_{++}^n$. We shall prove the statement \ref{statement1} by induction on $n$. Clearly, it holds for $n=2$. Let $n\geq 3$. Consider $x_2,\dots,x_n$ and $y_2,\dots,y_n$. By the induction hypothesis, there exists $k_1$ in $\{2,\dots,n\}$ such that $x_{k_1}y_{j_1}-y_{k_1}x_{j_1}\geq 0$ for all $j_1\in\{2,\dots, n\}$.
	
	Suppose that $x_1y_{k_1}-y_1x_{k_1}\geq0$. Since $x_1y_{k_1}\geq y_1x_{k_1}>0$ and $x_{k_1}y_{j_1}\geq y_{k_1}x_{j_1}>0$, we have $x_1y_{k_1}x_{k_1}y_{j_1}\geq y_1x_{k_1}y_{k_1}x_{j_1}$, and so $x_1y_{j_1}\geq y_1x_{j_1}$. Hence, $\det\begin{bmatrix}
	x_{1} & y_{1}\\
	x_{j} & y_{j}
	\end{bmatrix}\geq 0$ for $j=2,\dots,n$, and so $1$ is our desired index in \ref{statement1}. If $x_1y_{k_1}-y_1x_{k_1}\leq 0$, then $\det\begin{bmatrix}
	x_{k_1} & y_{k_1}\\
	x_{j} & y_{j}
	\end{bmatrix}\geq 0$ for all $j\in\{1,\dots, n\}$. Hence, by induction, the statement \ref{statement1} holds for $\by>0$. 
		
	Assuming $-\by\in\RR_{++}^n$, an analogous argument establishes \ref{statement1}.
	
	Note that for a square matrix, a change of the sign of a column switches the sign of the determinant of the matrix. 
	Therefore, by changing the sign of the vector \( \by \) above, the remaining conclusion follows.
\end{proof}

\begin{lemma}\label{lem:index domnates others in two vectors}
	Let $n\geq 2$. Let $\bx=(x_1,\dots,x_n)^T\in\RR_{++}^n$, and $\by=(y_1,\dots,y_n)^T\in\RR^n$. Then, the statements \ref{statement1} and \ref{statement2} in Proposition \ref{prop:index domnates others in two vectors} hold.
\end{lemma}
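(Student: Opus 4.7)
The plan is to reduce the lemma to Proposition~\ref{prop:index domnates others in two vectors} by a shift trick. The key observation is that for any real scalar $c$ and any indices $i,j$,
\begin{align*}
	\det\begin{bmatrix} x_i & y_i + c x_i \\ x_j & y_j + c x_j \end{bmatrix}
	= \det\begin{bmatrix} x_i & y_i \\ x_j & y_j \end{bmatrix},
\end{align*}
because adding $c$ times the first column to the second column leaves the determinant unchanged. Thus the sign pattern of the $2\times 2$ determinants is invariant under replacing $\by$ by $\by + c\bx$.

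Next, since $\bx\in\RR_{++}^n$ and $n$ is finite, we may choose $c > \max_{1\le i\le n}(-y_i/x_i)$, so that the shifted vector $\by' := \by + c\bx$ lies in $\RR_{++}^n$. Proposition~\ref{prop:index domnates others in two vectors}, applied to $\bx$ and $\by'$, produces indices $i_1$ and $i_2$ satisfying statements \ref{statement1} and \ref{statement2} for the pair $(\bx,\by')$. By the determinant invariance noted above, the same indices $i_1,i_2$ work for the pair $(\bx,\by)$, which is exactly the conclusion of the lemma.

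There is essentially no obstacle here: the entire content of the argument is the observation that one may freely add a multiple of $\bx$ to $\by$ without affecting any of the $2\times 2$ determinants, and that for $c$ large enough the shifted vector falls into the hypothesis of the already-established proposition. (Equivalently, one could bypass the proposition entirely and take $i_1 = \arg\max_i y_i/x_i$, $i_2 = \arg\min_i y_i/x_i$, but the shift reduction is more in line with how the authors structured the preceding proposition.)
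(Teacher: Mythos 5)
Your proof is correct, and it takes a genuinely different route from the paper. The paper partitions $\{1,\dots,n\}$ into the three blocks $R_1=\{i: y_i<0\}$, $R_2=\{i: y_i=0\}$, $R_3=\{i: y_i>0\}$, invokes Proposition~\ref{prop:index domnates others in two vectors} separately on $R_1$ and $R_3$ to extract candidate indices, and then checks the cross-block determinant signs by hand. Your shift trick --- replacing $\by$ by $\by+c\bx$ and using the column-operation invariance $\det[\bx\ \by] = \det[\bx\ \by+c\bx]$ to land in the hypothesis of the proposition --- accomplishes the same reduction in one step, and in fact sidesteps the edge cases ($R_1=\emptyset$ or $R_3=\emptyset$) that the paper's phrasing glosses over. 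It is cleaner and arguably how the lemma should have been deduced.

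One small slip in your parenthetical aside: the direct formulas are swapped. Since $\det\begin{bmatrix} x_{i} & y_{i} \\ x_{j} & y_{j}\end{bmatrix} = x_i y_j - x_j y_i \ge 0$ is equivalent (for positive $x$) to $y_j/x_j \ge y_i/x_i$, the index realizing statement~\ref{statement1} is $i_1 = \arg\min_i y_i/x_i$ and the one realizing statement~\ref{statement2} is $i_2 = \arg\max_i y_i/x_i$, not the other way around. This does not affect the main shift argument, which is what carries the proof.
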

\begin{proof}
	Let $R_1=\{1\leq i\leq n\,|\,y_i<0\}$, $R_2=\{1\leq i\leq n\,|\,y_i=0\}$, and $R_3=\{1\leq i\leq n\,|\,y_i>0\}$. By Proposition \ref{prop:index domnates others in two vectors}, there exist $k_1\in R_1$ and $k_3\in R_3$ such that $\mathrm{det}\begin{bmatrix}
	x_{k_1} & y_{k_1}\\
	x_{j_1} & y_{j_1}
	\end{bmatrix}\geq 0$ for $j_1\in R_1$ and $\mathrm{det}\begin{bmatrix}
	x_{k_3} & y_{k_3}\\
	x_{j_3} & y_{j_3}
	\end{bmatrix}\leq 0$ for $j_3\in R_3$. We can readily see that $\mathrm{det}\begin{bmatrix}
	x_{k_1} & y_{k_1}\\
	x_{j} & y_{j}
	\end{bmatrix}>0$ for $j\in R_2\cup R_3$; and $\mathrm{det}\begin{bmatrix}
	x_{k_3} & y_{k_3}\\
	x_{j} & y_{j}
	\end{bmatrix}<0$ for $j\in R_1\cup R_2$. Therefore, from the indices $k_1$ and $k_3$, our desired conclusion follows.
\end{proof}

Here is an interim result to deduce main results of this section (Theorems \ref{thm:logconcave seq} and \ref{thm:UpperMonocity2}).

\begin{proposition}\label{prop:log index}
	Let $A$ be an $n\times n$ irreducible, nonnegative, positive semidefinite matrix, and let $\bx\in\RR_{++}^n$. Then, there exists a log-concavity (resp. log-convexity) index of $A$ associated with $\bx$.
\end{proposition}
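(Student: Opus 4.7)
The plan is to establish the following uniform statement: for every pair $i,j\in\{1,\dots,n\}$, the quantity
\begin{align*}
D_k(i,j) := (A^k\bx)_i(A^{k-1}\bx)_j - (A^{k-1}\bx)_i(A^k\bx)_j
\end{align*}
has a sign that is eventually constant in $k$. Because $A$ is nonnegative irreducible with $\bx\in\RR_{++}^n$, each $A^k\bx$ is a positive vector, so $\mathrm{sign}(D_k(i,j))$ coincides with the sign of the difference of the two coordinate ratios appearing in \Cref{def:log index}; in particular, all those ratios are well defined.

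The central step is a spectral computation. Since $A$ is symmetric (being positive semidefinite), it decomposes as $A=\sum_s \lambda_s P_s$, where $\lambda_1>\lambda_2>\cdots>\lambda_p\geq 0$ are the distinct eigenvalues and each $P_s$ is the orthogonal projection onto the $\lambda_s$-eigenspace. Substituting $A^k\bx=\sum_s \lambda_s^k P_s\bx$ into the definition of $D_k(i,j)$ cancels the diagonal terms $s=t$, and pairing $(s,t)$ with $(t,s)$ yields
\begin{align*}
D_k(i,j) = \sum_{s<t}(\lambda_s\lambda_t)^{k-1}(\lambda_s-\lambda_t)\bigl[(P_s\bx)_i(P_t\bx)_j - (P_t\bx)_i(P_s\bx)_j\bigr].
\end{align*}
After regrouping according to the distinct values of the product $\lambda_s\lambda_t$, this presents $D_k(i,j)$ as a finite $\RR$-linear combination of the sequences $(r^{k-1})_k$ for distinct positive reals $r$. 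Dividing by the largest such $r$ with nonzero coefficient shows that either $D_k(i,j)\equiv 0$, or $\mathrm{sign}(D_k(i,j))$ is a fixed nonzero constant for all $k$ beyond some threshold $K_{i,j}$.

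Once this pairwise stabilization is in hand, the relation $i\succeq j$ defined by ``$D_k(i,j)\geq 0$ for every sufficiently large $k$'' is easily checked to be reflexive, transitive, and total on the finite set $\{1,\dots,n\}$. Selecting any maximum element $i_1$ and any minimum element $i_2$ of this preorder, and setting $K:=\max_{i,j}K_{i,j}$, we obtain that for $k\geq K$ the index $i_1$ attains $\max_i\frac{(A^k\bx)_i}{(A^{k-1}\bx)_i}$ and $i_2$ attains $\min_i\frac{(A^k\bx)_i}{(A^{k-1}\bx)_i}$, which is exactly the requirement of \Cref{def:log index}.

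The principal obstacle is the bookkeeping in the second step: the pairs $(s,t)$ that share the same product $\lambda_s\lambda_t$ must be consolidated before invoking linear independence of the exponential sequences $(r^{k-1})_k$, and the degenerate case in which $D_k(i,j)$ vanishes identically (so indices $i$ and $j$ remain perpetually tied, in which case either may serve in the preorder) must be handled gracefully. Irreducibility and positivity of $\bx$ enter only to keep denominators positive; the eventual ordering of the ratios is dictated entirely by the symmetric spectral structure of $A$.
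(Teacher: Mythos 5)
Your argument is correct, and it opens exactly as the paper does: after the spectral decomposition $A=\sum_s\lambda_sP_s$, the quantity $D_k(i,j)$ (the paper's $F(p,q;k)$) expands as $\sum_{s<t}(\lambda_s\lambda_t)^{k-1}(\lambda_s-\lambda_t)\det\left[\begin{smallmatrix}(P_s\bx)_i&(P_t\bx)_i\\(P_s\bx)_j&(P_t\bx)_j\end{smallmatrix}\right]$, which is precisely display \eqref{det condition}. The two proofs part ways in how they extract the maximizing index from this expansion. You group the exponentials $(\lambda_s\lambda_t)^{k-1}$, invoke linear independence of distinct exponentials to get eventual sign stability of each $D_k(i,j)$, and then finish abstractly by noting that the induced total preorder on a finite set has a maximum and a minimum. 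The paper instead pins down the dominant term concretely: it defines $m_0$ as the least index with a nonzero $(1,m_0)$ minor, argues that every other pair $(m_1,m_2)$ with $\mu_{m_1}\mu_{m_2}\ge\mu_1\mu_{m_0}$ has vanishing minor (a step that quietly uses the strict positivity of $E_1\bx$, i.e.\ that ``column $1$'' never vanishes --- the same fact that makes your $m_0$-type quantity well-defined), and then builds the extremal index $\hat p$ by repeated use of \Cref{lem:index domnates others in two vectors} through the nested sets $X_1\supseteq X_2\supseteq\cdots$. Your route bypasses \Cref{lem:index domnates others in two vectors} entirely and is shorter; what the paper's constructive route buys is \Cref{Remark:finding index}, which turns the proof into a procedure for locating the log-concavity/log-convexity index in the cases $|X_1|=1$ or $|X_1|>1$ with trivial $X_1$ minors, and this procedure is then exercised in the examples of Subsection~\ref{Subsec:3.2}. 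One small imprecision in your write-up: if the only surviving coefficients sit at the product $\lambda_s\lambda_t=0$ then $D_1(i,j)\neq0$ while $D_k(i,j)=0$ for all $k\ge2$, so ``eventually zero'' rather than ``identically zero''; your threshold $K_{i,j}$ already absorbs this and no harm is done.
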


\begin{proof}
	Let $F(p,q;k)=\left(\be_p^T A^k\bx\right)\left(\be_q^T A^{k-1}\bx\right)-\left(\be_q^T A^k\bx\right)\left(\be_p^T A^{k-1}\bx\right)$ for $k\geq 1$ and $p,q\in\{1,\dots,n\}$.  Then, $F(p,q;k)\ge0$ if and only if $\frac{(A^k\bx)_{p}}{(A^{k-1}\bx)_{p}}\ge\frac{(A^k\bx)_{q}}{(A^{k-1}\bx)_{q}}$. In order to show the existence of a log-concavity index of $A$ associated with $\bx$, we shall prove the following claim.
	\begin{center}
		\begin{enumerate}[label=(C\arabic*)]
		\item\label{claim 1} There exist $\hat{p}\in\{1,\dots,n\}$ and $K\geq 1$ such that for each $q\in\{1,\dots,n\}$, $F(\hat{p},q;k)\ge 0$ for all $k\geq K$. 
		\end{enumerate}
	\end{center}
	
	Let $\mu_1,\dots,\mu_\ell$ be the distinct eigenvalues of $A$ with $\mu_1>\dots>\mu_\ell\geq 0$ for some $\ell\geq 2$. Let $E_i$ be the orthogonal projection matrix onto the eigenspace of $A$ associated with $\mu_i$ for $i=1,\dots,\ell$. From the spectral decomposition, we have
	\begin{align*}
	A^k=\sum_{i=1}^{\ell}\mu_i^kE_i.
	\end{align*}
	Let $\by_i=E_i\bx$ for $i=1,\dots,n$, and let \( y_{s,t}=(\by_t)_s \). We can find that
	\begin{align}\nonumber
	F(p,q;k)=&\left(\sum_{i=1}^{\ell}\mu_i^ky_{p,i}\right)\left(\sum_{i=1}^{\ell}\mu_i^{k-1}y_{q,i}\right)-\left(\sum_{i=1}^{\ell}\mu_i^ky_{q,i}\right)\left(\sum_{i=1}^{\ell}\mu_i^{k-1}y_{p,i}\right)\\\nonumber
	=&\sum_{1\leq i<j\leq \ell}(\mu_i\mu_j)^{k-1}(\mu_i-\mu_j)y_{p,i}y_{q,j}-\sum_{1\leq i<j\leq \ell}(\mu_i\mu_j)^{k-1}(\mu_i-\mu_j)y_{q,i}y_{p,j}\\\label{det condition}
	=&\sum_{1\leq i<j\leq \ell}(\mu_i\mu_j)^{k-1}(\mu_i-\mu_j)\det\begin{bmatrix}
		y_{p,i} & y_{p,j}\\
		y_{q,i} & y_{q,j}
	\end{bmatrix}.
	\end{align}

	Before we consider the claim \ref{claim 1}, we shall find the dominant term of $F(p,q;k)$ for sufficiently large \( k \), which determines the sign of $F(p,q;k)$. For brevity, we write \( p\succ q \) (resp. $p\succeq q$) if \( F(p,q;k)>0 \) (resp. $F(p,q;k)\geq 0$) for sufficiently large \( k \). Let $p,q\in\{1,\dots,n\}$. Suppose that $F(p,q;k)\neq 0$ for some $k\geq 1$. Then,
	$$m_0=\min\left\{m=2,\dots,n\,\middle|\,\det\begin{bmatrix}
		y_{p,1} & y_{p,m}\\
		y_{q,1} & y_{q,m}
	\end{bmatrix}\neq 0\right\}$$ is well-defined. Suppose that $\mu_{m_1}\mu_{m_2}\geq \mu_1\mu_{m_0}$ for some $1\leq m_1<m_2\leq n$ with $(m_1,m_2)\neq (1,m_0)$. Then, $m_1$ and $m_2$ must be between $1$ and ${m_0}$. Since $\det\begin{bmatrix}
		y_{p,1} & y_{p,m_1}\\
		y_{q,1} & y_{q,m_1}
	\end{bmatrix}=\det\begin{bmatrix}
		y_{p,1} & y_{p,m_2}\\
		y_{q,1} & y_{q,m_2}
	\end{bmatrix}=0$, we have $\det\begin{bmatrix}
		y_{p,m_1} & y_{p,m_2}\\
		y_{q,m_1} & y_{q,m_2}
	\end{bmatrix}=0$. Hence, the dominant term of $F(p,q;k)$ is 
	\begin{equation}\label{eq:dominant term of F}
		(\mu_1-\mu_{m_0})\det\begin{bmatrix}
			y_{p,1} & y_{p,{m_0}}\\
			y_{q,1} & y_{q,{m_0}}
		\end{bmatrix}(\mu_1\mu_{m_0})^{k-1}.		
	\end{equation}
	Thus, if $\det\begin{bmatrix}
		y_{p,1} & y_{p,{m_0}}\\
		y_{q,1} & y_{q,{m_0}}
	\end{bmatrix}>0$, then \( p\succ q \). Note that $\det\begin{bmatrix}
		y_{p,1} & y_{p,{m_0}}\\
		y_{q,1} & y_{q,{m_0}}
	\end{bmatrix}=0$ does not imply that $F(p,q,k)=0$ for sufficiently large $k$.

	In order to establish the claim \ref{claim 1}, it suffices to show the following claim.
	\begin{center}
		\begin{enumerate}[label=(C2)]
			\item\label{claim 2} There exists $\hat{p}\in\{1,\dots,n\}$ such that $\hat{p}\succeq q$ for all $q\in\{1,\dots,n\}$.
		\end{enumerate} 	
	\end{center}
	 We note that if $A^{l_1}\bx$ is a Perron vector of $A$ for some $l_1\geq 0$, so is $A^{l_2}\bx$ for $l_2\geq l_1$. If there exists an integer $K\geq 1$ such that $F(p,q;k)=0$ for all $p,q\in\{1,\dots,n\}$ and $k\geq K$, \textit{i.e.}, $A^{k-1}\bx$ is a Perron vector of $A$ for each $k\geq K$, then we have a log-concavity index. We suppose that for each $k\geq 1$, $F(p,q;k)\neq 0$ for some $p,q\in\{1,\dots,n\}$, that is, $A^{k-1}\bx$ is not a Perron vector for each $k\geq 1$. Then, we may choose $$j_0=\min\left\{j=2,\dots,n\,\middle|\,\det\begin{bmatrix}
		y_{p,1} & y_{p,j}\\
		y_{q,1} & y_{q,j}
	\end{bmatrix}\neq 0\;\text{for some}\; 1\leq p,q\leq n\right\}.$$
	By Lemma \ref{lem:index domnates others in two vectors}, there exists $p_0\in\{1,\dots, n\}$ such that 
	$$\det\begin{bmatrix}
		y_{p_0,1} & y_{p_0,j_0}\\
		y_{q,1} & y_{q,j_0}
	\end{bmatrix}\geq 0$$ 
	for $q\in\{1,\dots, n\}$. Let $$X_1=\left\{q=1,\dots,n\,\middle|\,\det\begin{bmatrix}
		y_{p_0,1} & y_{p_0,j_0}\\
		y_{q,1} & y_{q,j_0}
	\end{bmatrix}=0\right\}.$$
	For each $q\in\{1,\dots, n\}\backslash X_1$, the dominant term of $F(p_0,q,k)$ is positive, and thus \( p_0\succ q \). If $X_1=\{p_0\}$, then $p_0$ is the log-concavity index.
	
	Suppose $|X_1|>1$. Then, to complete the proof, we need to find some $p_1\in X_1$ such that $p_1\succeq q$ for $q\in X_1$, which implies $p_1\succeq q$ for $q\in \{1,\dots,n\}$. Note that considering how $j_0$ is defined, we have $\det\begin{bmatrix}
		y_{p,1} & y_{p,j}\\
		y_{q,1} & y_{q,j}
	\end{bmatrix}=0$ for $p,q\in X_1$ and \( 2\leq  j\le j_0 \). If $\det\begin{bmatrix}
		y_{p,1} & y_{p,j}\\
		y_{q,1} & y_{q,j}
	\end{bmatrix}=0$ for all $p,q\in X_1$ and $j_0+1\leq j\leq n$, then we have $F(p,q;k)= 0$ for all $p,q\in X_1$ and $k\geq 1$, and it follows that all elements in $X_1$ are log-concavity indices. We now suppose that
	$$j_1=\min\left\{j=j_0+1,\dots,n\,\middle|\,\det\begin{bmatrix}
		y_{p,1} & y_{p,j}\\
		y_{q,1} & y_{q,j}
	\end{bmatrix}\neq 0\;\text{for some}\; p,q\in X_1\right\}$$
	is well-defined. Applying Lemma \ref{lem:index domnates others in two vectors}, there exists $p_1\in X_1$ (by abuse of notation) such that 
	$$\det\begin{bmatrix}
		y_{p_1,1} & y_{p_1,j_1}\\
		y_{q,1} & y_{q,j_1}
	\end{bmatrix}\geq 0$$ 
	for $q\in X_1$.  Let 
	$$X_2=\left\{q\in X_1\,\middle|\,\det\begin{bmatrix}
		y_{p_1,1} & y_{p_1,j_1}\\
		y_{q,1} & y_{q,j_1}
	\end{bmatrix}=0\right\}.$$
	If $X_2=\{p_1\}$, then it follows that $p_1$ is the log-concavity index. If $|X_2|>1$, one can apply a similar argument as done for the case $|X_1|>1$. Continuing this process with a finite number of steps, we may conclude that there exists a log-concavity index, as claimed.

	Regarding the existence of a log-convexity index of $A$ associated with $\bx$, applying an analogous argument above, one can prove that there exists an index $\hat{r}$ such that $q \succeq \hat{r}$ for all $q\in\{1,\dots,n\}$.
\end{proof}

\begin{remark}\label{Remark:finding index}
	Continuing Proposition \ref{prop:log index} with the same notation, examining the proof of that proposition, one can find log-concavity and log-convexity indices with a few steps, which will be demonstrated in \Cref{Subsec:3.2}, for the following particular cases: \begin{enumerate*}[label=(\alph*)]
	\item\label{case a} $|X_1|=1$, \item\label{case b} $|X_1|>1$ and $\det\begin{bmatrix}
		y_{p,1} & y_{p,j}\\
		y_{q,1} & y_{q,j}
	\end{bmatrix}=0$ for all $p,q\in X_1$ and $2\leq j\leq n$.
	\end{enumerate*}
	For the former, the element in $X_1$ is a log-concavity index of $A$. For the latter, all elements in $X_1$ are log-concavity indices of $A$. For both cases, if $A^{k-1}\bx$ is not a Perron vector for $k\geq 1$, then the set of log-concavity indices of $A$ is $X_1$. One can apply a similar argument for log-convexity index of $A$.
\end{remark}

\begin{theorem}\label{thm:logconcave seq}
	Let $A$ be an $n\times n$ irreducible, nonnegative, positive semidefinite matrix and $\bx\in\RR_{++}^n$. From Proposition \ref{prop:log index}, we may define $i_1$ and $i_2$ to be a log-concavity index and log-convexity index, respectively. We let $g_k=(A^{k-1}\bx)_{i_1}$ and $h_k=(A^{k-1}\bx)_{i_2}$ for $k\geq 1$. Then, $(g_k)_{k\geq k_1}$ is log-concave for some $k_1\geq 1$, and $(h_k)_{k\geq k_2}$ is log-convex for some $k_2\geq 1$.
\end{theorem}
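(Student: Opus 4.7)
The plan is that this theorem is essentially the composition of two results already in place: Proposition \ref{prop:log index} provides the existence of the indices $i_1,i_2$, and Proposition \ref{Prop:logconcavity index} converts the existence of such indices into the log-concavity/log-convexity statements. So the proof should be almost immediate once the indexing is matched carefully.

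First, I would invoke Proposition \ref{prop:log index}: since $A$ is an $n\times n$ irreducible, nonnegative, positive semidefinite matrix and $\bx\in\RR_{++}^n$, there exist indices $i_1,i_2\in\{1,\dots,n\}$ such that $i_1$ is a log-concavity index of $A$ associated with $\bx$ and $i_2$ is a log-convexity index of $A$ associated with $\bx$. By Definition \ref{def:log index}, this yields positive integers $K_1,K_2$ such that for $k\geq K_1$,
\[
\frac{(A^k\bx)_{i_1}}{(A^{k-1}\bx)_{i_1}}=\max_{i}\frac{(A^k\bx)_{i}}{(A^{k-1}\bx)_{i}}=a_k(A,\bx),
\]
and for $k\geq K_2$,
\[
\frac{(A^k\bx)_{i_2}}{(A^{k-1}\bx)_{i_2}}=\min_{i}\frac{(A^k\bx)_{i}}{(A^{k-1}\bx)_{i}}=b_k(A,\bx).
\]

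Next, I would translate this into the sequences $g_k=(A^{k-1}\bx)_{i_1}$ and $h_k=(A^{k-1}\bx)_{i_2}$. Note the indexing shift compared with Proposition \ref{Prop:logconcavity index}: writing $g_k = (A^{k-1}\bx)_{i_1}$ instead of $(A^{k}\bx)_{i_1}$ only shifts $k$ by $1$, so the log-concavity/log-convexity relation $g_{k-1}g_{k+1}\le g_k^2$ (resp.\ $\ge$) corresponds to exactly the ratio inequalities above. Concretely, setting $k_1=K_1+1$, for $k\geq k_1$ we have
\[
\frac{g_{k+1}}{g_{k}}=\frac{(A^{k}\bx)_{i_1}}{(A^{k-1}\bx)_{i_1}}=a_{k}(A,\bx)\leq a_{k-1}(A,\bx)=\frac{(A^{k-1}\bx)_{i_1}}{(A^{k-2}\bx)_{i_1}}=\frac{g_k}{g_{k-1}},
\]
where the middle inequality uses Theorem \ref{thm:known}. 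This gives $g_{k-1}g_{k+1}\leq g_k^2$ for $k\geq k_1$, i.e., $(g_k)_{k\geq k_1}$ is log-concave. Setting $k_2=K_2+1$ and using instead that $(b_k(A,\bx))_{k\geq 1}$ is increasing, the analogous chain yields $h_{k-1}h_{k+1}\geq h_k^2$ for $k\geq k_2$, so $(h_k)_{k\geq k_2}$ is log-convex.

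There is no real obstacle here, since both ingredients are already established; the only thing to be careful about is not conflating the two different indexing conventions used in Proposition \ref{Prop:logconcavity index} (starting at $k=0$) and in the theorem's statement (starting at $k=1$), but this amounts to a harmless relabeling $k\mapsto k-1$. The substantive content of the theorem is entirely carried by Proposition \ref{prop:log index}, which guarantees that the indices achieving the max/min of the ratios stabilize for large $k$; once that is in hand, the monotonicity half-theorems on $a_k$ and $b_k$ close the argument.
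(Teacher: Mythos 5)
Your proposal is correct and follows essentially the same route as the paper: the paper's proof is simply ``It follows from Proposition \ref{Prop:logconcavity index},'' and your write-up unwinds exactly that composition of Proposition \ref{prop:log index} (existence of the indices) with Proposition \ref{Prop:logconcavity index} (conversion to log-concavity/log-convexity via the monotonicity in Theorem \ref{thm:known}), with a careful handling of the index shift.
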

\begin{proof}
	It follows from \Cref{Prop:logconcavity index}.
\end{proof}

\subsection{Strict monotonicity of sequences $(a_k(A,\bx))_{k\geq 1}$, $(b_k(A,\bx))_{k\geq 1}$, and $(c_k(A,\bx))_{k\geq 1}$}

First, we shall find two conditions for sequences $(a_k(A,\bx))_{k\geq 1}$ and $(b_k(A,\bx))_{k\geq 1}$ to be strictly monotone. We begin with a simple lemma, which will be used in Propositions \ref{prop:inequality} and \ref{prop:inequality2}. 

\begin{lemma}\label{lem:SumOfRatios}
	Let $a_i$ and $b_i$ be positive numbers for $i=1,\dots,n$. Then,
    \begin{enumerate}[label=(\roman*)]
        \item\label{fact1} if $\frac{a_1}{b_1}\geq \frac{a_j}{b_j}$ for $1\leq j\leq n$, then
        $
        \frac{a_1}{b_1}\geq \frac{a_1+\cdots+a_n}{b_1+\cdots+b_n}
        $; and\\
        \item\label{fact2} if $\frac{a_1}{b_1}\leq \frac{a_j}{b_j}$ for $1\leq j\leq n$, then
        $
        \frac{a_1}{b_1}\leq \frac{a_1+\cdots+a_n}{b_1+\cdots+b_n}.
        $
    \end{enumerate}
\end{lemma}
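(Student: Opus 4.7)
The plan is to prove part (i) by cross-multiplying and summing, then deduce part (ii) by a symmetric argument. Since every $b_i > 0$, the hypothesis $\frac{a_1}{b_1} \geq \frac{a_j}{b_j}$ is equivalent to $a_1 b_j \geq a_j b_1$ for each $j \in \{1,\dots,n\}$. I would sum these $n$ inequalities to obtain
\begin{align*}
a_1 \sum_{j=1}^n b_j \;\geq\; b_1 \sum_{j=1}^n a_j.
\end{align*}
Since $b_1 > 0$ and $\sum_{j=1}^n b_j > 0$, dividing both sides by the positive quantity $b_1 \sum_{j=1}^n b_j$ immediately yields
\begin{align*}
\frac{a_1}{b_1} \;\geq\; \frac{a_1 + \cdots + a_n}{b_1 + \cdots + b_n},
\end{align*}
which is the claim in \ref{fact1}.

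For part \ref{fact2}, the cleanest route is to repeat the same argument verbatim with every inequality reversed: the hypothesis $\frac{a_1}{b_1} \leq \frac{a_j}{b_j}$ becomes $a_1 b_j \leq a_j b_1$, summing and dividing by the positive quantity $b_1 \sum_{j=1}^n b_j$ produces the reversed conclusion. Alternatively, one can apply part \ref{fact1} to the pair of sequences $(b_i)$ and $(a_i)$ (in the roles of $a_i$ and $b_i$, respectively), since $\frac{a_1}{b_1}\leq \frac{a_j}{b_j}$ translates to $\frac{b_1}{a_1}\geq \frac{b_j}{a_j}$, and then taking reciprocals of the resulting inequality.

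There is no real obstacle here: the entire argument rests on the sign-preserving properties of multiplication and division by positive numbers. The only point requiring care is to verify that all denominators in sight are strictly positive (ensured by the hypothesis $a_i, b_i > 0$) so that direction of inequality is preserved throughout.
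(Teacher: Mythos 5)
Your proof is correct. The paper states this lemma without proof (it is introduced as ``a simple lemma''), and your cross-multiply-and-sum argument is the standard way to fill in that omission; the alternative derivation of \ref{fact2} by swapping the roles of $(a_i)$ and $(b_i)$ in \ref{fact1} and taking reciprocals is also valid given that all the $a_i$ and $b_i$ are strictly positive.
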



\begin{remark}\label{lem:notperron}
	Let $A$ be an $n\times n$ irreducible nonnegative matrix and $\bx\in\mathbb{R}_{++}^n$. It can be readily seen that $\bx$ is not a Perron vector if and only if there exists $j\in\{1,\dots,n\}$ such that $\frac{\left(A\bx\right)_{j}}{\left(\bx\right)_{j}}\neq \max_{i}\frac{\left(A^{2}\bx\right)_i}{\left(A\bx\right)_i}$.
\end{remark}
\begin{proposition}\label{prop:inequality}
	Let $A$ be an $n\times n$ irreducible nonnegative matrix and $\bx\in\mathbb{R}_{++}^n$. Suppose that $\bx$ is not a Perron vector. 
	Then, we have the following:
	\begin{enumerate}[label=(\roman*)]
		\item\label{result1} If $\max_{i}\frac{\left(A^{2}\bx\right)_i}{\left(A\bx\right)_i}\leq \frac{\left(A\bx\right)_{j}}{\left(\bx\right)_{j}}$ for $j=1,\dots,n$, then $\max_{i}\frac{\left(A^{2}\bx\right)_i}{\left(A\bx\right)_i}< \max_{i}\frac{\left(A\bx\right)_i}{\left(\bx\right)_i}$.
		\item\label{result2} Suppose that there exist $i_0,j_0\in\{1,\dots,n\}$ such that $\frac{\left(A\bx\right)_{j_0}}{\left(\bx\right)_{j_0}}<\max_{i}\frac{\left(A^{2}\bx\right)_i}{\left(A\bx\right)_i}=\frac{\left(A^2\bx\right)_{i_0}}{\left(A\bx\right)_{i_0}}$. Assume $a_{i_0,j_0}\neq 0$. Then, $\max_{i}\frac{\left(A^{2}\bx\right)_i}{\left(A\bx\right)_i}< \max_{i}\frac{\left(A\bx\right)_i}{\left(\bx\right)_i}$.
	\end{enumerate}
\end{proposition}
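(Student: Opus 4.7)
The plan is to rewrite each ratio $(A^2\bx)_i/(A\bx)_i$ as a weighted average of the ratios $(A\bx)_j/(\bx)_j$ and then read off the desired strict inequalities directly. Writing $u_j=(\bx)_j$, $v_j=(A\bx)_j$, $w_i=(A^2\bx)_i$, and using $v_i=\sum_j a_{ij}u_j$, the identity
\[
\frac{w_i}{v_i}=\frac{\sum_j a_{ij}v_j}{\sum_j a_{ij}u_j}=\frac{\sum_j (a_{ij}u_j)(v_j/u_j)}{\sum_j a_{ij}u_j}
\]
expresses $w_i/v_i$ as a weighted average of the numbers $v_j/u_j$ with nonnegative weights $a_{ij}u_j$. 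This identity is essentially the bookkeeping behind Lemma \ref{lem:SumOfRatios}, which gives the nonstrict comparison $w_i/v_i\leq \max_j v_j/u_j$ and hence the nonstrict monotonicity $M_2\leq M_1$ already contained in \Cref{thm:known}, where I set $M_1:=\max_i v_i/u_i$ and $M_2:=\max_i w_i/v_i$. My task is to upgrade each $\leq$ to a $<$ under the hypotheses of \textup{(i)} and \textup{(ii)}.

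For part \textup{(i)} I would invoke Remark \ref{lem:notperron}: since $\bx$ is not a Perron vector, some index $j^\ast$ satisfies $v_{j^\ast}/u_{j^\ast}\neq M_2$. The hypothesis $M_2\leq v_j/u_j$ for all $j$ then forces $v_{j^\ast}/u_{j^\ast}>M_2$, whence $M_1\geq v_{j^\ast}/u_{j^\ast}>M_2$, as required. For part \textup{(ii)} I would specialise the identity at $i=i_0$ to write $M_2=w_{i_0}/v_{i_0}$ as a weighted average of the $v_j/u_j$ and split off the term $j=j_0$. The hypothesis $v_{j_0}/u_{j_0}<M_2\leq M_1$ is strict, and its weight $a_{i_0,j_0}u_{j_0}$ is strictly positive (this is precisely where $a_{i_0,j_0}\neq 0$ and $\bx\in\RR^n_{++}$ enter). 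Since every other weight is nonnegative and every other ratio is at most $M_1$, the weighted average is strictly smaller than $M_1$, giving $M_2<M_1$.

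The only real subtlety, which I would flag as the main obstacle, is that Lemma \ref{lem:SumOfRatios} as stated supplies only a nonstrict comparison. Consequently in part \textup{(ii)} I cannot quote it as a black box: instead I need to perform the ``separate one term and bound it'' step by hand, so that the strict drop $v_{j_0}/u_{j_0}<M_1$ at an index carrying positive weight propagates to a strict drop in the weighted average. Once that short piece of bookkeeping is in place, both \textup{(i)} and \textup{(ii)} follow from a direct computation.
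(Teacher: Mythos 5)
Your proposal is correct. For \textup{(i)} you and the paper argue in essentially the same way via Remark~\ref{lem:notperron}: the hypothesis $M_2\leq (A\bx)_j/(\bx)_j$ for all $j$, together with the existence of some $j^\ast$ at which $(A\bx)_{j^\ast}/(\bx)_{j^\ast}\neq M_2$, forces $M_2<(A\bx)_{j^\ast}/(\bx)_{j^\ast}\leq M_1$. For \textup{(ii)} your argument is a direct version of what the paper does by contradiction. Both proofs exploit that $(A^2\bx)_{i_0}/(A\bx)_{i_0}$ is a weighted mean of the ratios $(A\bx)_j/(\bx)_j$ with weights $a_{i_0,j}(\bx)_j\geq 0$. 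The paper assumes $M_2=M_1$, rewrites this as the per-index inequality \eqref{Temp:inequality}, multiplies through by $a_{i_0,l}$ and sums to get a strict inequality because $a_{i_0,j_0}>0$, and then lands on the chain \eqref{Temp:inequality2} reading $0\leq\cdots<\cdots\leq 0$, invoking Lemma~\ref{lem:SumOfRatios} only for the final nonstrict step. You instead observe directly that in a weighted average of numbers each $\leq M_1$, strict deficiency at a positively weighted index pulls the average strictly below $M_1$; since $(A\bx)_{j_0}/(\bx)_{j_0}<M_2\leq M_1$ and $a_{i_0,j_0}(\bx)_{j_0}>0$, this gives $M_2<M_1$ at once. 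You correctly flag that Lemma~\ref{lem:SumOfRatios} only delivers the nonstrict comparison and that the strictness has to be produced by separating off the $j_0$ term; the paper injects the same strictness at the summation step inside its contradiction argument. Your route is a bit more economical and avoids the contradiction scaffolding, but both rest on the same underlying weighted-average structure.
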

\begin{proof}
Under the hypothesis of \ref{result1}, since \( \bx \) is not a Perron vector, we have $\max_{i}\frac{\left(A^{2}\bx\right)_i}{\left(A\bx\right)_i}\neq\max_{i}\frac{\left(A\bx\right)_i}{\left(\bx\right)_i}$ and hence \ref{result1} follows.
	
Now we suppose that the hypotheses of \ref{result2} holds. Considering Theorem~\ref{thm:known}, we assume to the contrary that 
$$\max_{i}\frac{\left(A^{2}\bx\right)_i}{\left(A\bx\right)_i}=\max_{i}\frac{\left(A\bx\right)_i}{\left(\bx\right)_i}.$$
Let $i_0$ and $j_0$ be the indices in the hypotheses. Then, 
$$\frac{\left(A^{2}\bx\right)_{i_0}}{\left(A\bx\right)_{i_0}}=\max_{i}\frac{\left(A\bx\right)_i}{\left(\bx\right)_i}\geq \frac{\left(A\bx\right)_j}{\left(\bx\right)_j}$$ 
for all $1\leq j\leq n$. Let $\ba_i^T$ be the $i^\text{th}$ row of the matrix $A$ for $i=1,\dots, n$. For each $j=1,\dots,n$,
\begin{align}\nonumber
	&\frac{\be_{i_0}^TA^2\bx}{\be_{i_0}^TA\bx}-\frac{\be_j^TA\bx}{\be_j^T\bx}\geq 0\\\label{Temp:inequality0}
	\iff & \frac{\be_{i_0}^TA^2\bx}{\be_{i_0}^TA\bx}\be_j^T\bx-\be_j^TA\bx\geq  0\\\nonumber
	\iff & (\be_{i_0}^TA^2\bx)(\be_j^T\bx)-(\be_{i_0}^TA\bx)(\be_j^TA\bx)\geq  0\\\label{Temp:inequality1}
	\iff & (\ba_{i_0}^TA\bx)x_j-(\ba_{i_0}^T\bx)(\ba_j^T\bx)\geq 0\\\label{Temp:inequality}
	\iff & \ba_j^T\bx\leq \frac{\left(\sum_{k=1}^n a_{i_0,k}\ba_k^T\bx\right)x_j}{\ba_{i_0}^T\bx}. \qquad ( \ba_{i_0}^T\bx>0 )
\end{align}

 By the hypothesis, the equality in \eqref{Temp:inequality} does not hold for \( j=j_0 \). Since $a_{i_0,j_0}\neq 0$, taking summation both sides in \eqref{Temp:inequality}, we have
\[ 
	\sum_{l=1}^n a_{i_0,l}\ba_l^T\bx<\sum_{l=1}^n a_{i_0,l}\left(\frac{\left(\sum_{k=1}^n a_{i_0,k}\ba_k^T\bx\right)x_l}{\ba_{i_0}^T\bx}\right).
\]

Now let $\frac{\left(A\bx\right)_{k_0}}{\left(\bx\right)_{k_0}}=\max_{i}\frac{\left(A\bx\right)_i}{\left(\bx\right)_i}$ for some $k_0$. Since \( \max_{i}\frac{\left(A^{2}\bx\right)_i}{\left(A\bx\right)_i}>\frac{\left(A\bx\right)_{j_0}}{\left(\bx\right)_{j_0}} \), we must have $j_0\neq k_0$. Starting from \eqref{Temp:inequality1} for $j=k_0$, we obtain a contradiction from the following argument:
\begin{align}\nonumber
	0\leq&\left(\sum_{l=1}^n a_{i_0,l}\ba_l^T\bx\right)x_{k_0}-\left(\sum_{l=1}^na_{i_0,l}x_l\right)(\ba_{k_0}^T\bx)\\\nonumber
	<&\left(\sum_{l=1}^n a_{i_0,l}\left(\frac{\left(\sum_{k=1}^n a_{i_0,k}\ba_k^T\bx\right)x_l}{\ba_{i_0}^T\bx}\right)\right)x_{k_0}-\left(\sum_{l=1}^na_{i_0,l}x_l\right)(\ba_{k_0}^T\bx)\\\nonumber
	=&\sum_{l=1}^n a_{i_0,l}x_l\left(\frac{\sum_{k=1}^n a_{i_0,k}\ba_k^T\bx}{\sum_{k=1}^na_{i_0,k}x_k}x_{k_0}-\ba_{k_0}^T\bx\right)\\\label{Temp:inequality2}
	\leq&\sum_{l=1}^n a_{i_0,l}x_l\left(\frac{\ba_{k_0}^T\bx}{x_{k_0}}x_{k_0}-\ba_{k_0}^T\bx\right)=0,
\end{align}
The inequality in \eqref{Temp:inequality2} follows from \ref{fact1} of Lemma \ref{lem:SumOfRatios} with \( \frac{\ba_{k_0}^T\bx}{x_{k_0}}\ge \frac{\ba_k^T\bx}{x_k}=\frac{a_{i_0,k}\ba_k^T\bx}{a_{i_0,k}x_k} \) for \( 1\le k\le n \). Therefore, $\max_{i}\frac{\left(A^{2}\bx\right)_i}{\left(A\bx\right)_i} < \max_{i}\frac{\left(A\bx\right)_i}{\left(\bx\right)_i}$. 
\end{proof}

\begin{remark}
	Consider the following irreducible nonnegative matrix $A$:
	\begin{align*}
		A=\begin{bmatrix}
			2 & 2 & 2\\
			3 & 3 & 0\\
			1 & 1 & 1
		\end{bmatrix}.
	\end{align*}
	Let $\bx=\mathbf{1}$. It can be readily checked that $6=\max_i\frac{(A^2\bx)_i}{(A\bx)_i}=\frac{(A^2\bx)_2}{(A\bx)_2}>\frac{(A^2\bx)_1}{(A\bx)_1}=\frac{(A^2\bx)_3}{(A\bx)_3}$; and $6=\frac{(A\bx)_1}{(\bx)_1}=\frac{(A\bx)_2}{(\bx)_2}>\frac{(A\bx)_3}{(\bx)_3}$. Since $a_{2,3}=0$, we can see that the hypothesis of \ref{result2} in Proposition \ref{prop:inequality} is not satisfied. Moreover,
	$\max_{i}\frac{\left(A^{2}\bx\right)_i}{\left(A\bx\right)_i}=\max_{i}\frac{\left(A\bx\right)_i}{\left(\bx\right)_i}$.
\end{remark}

\begin{remark}
	In this remark, we shall provide a concrete example of \textit{non-semipositivity vectors} \cite{tsatsomeros2016geometric} for non-singular $M$-matrices.
	
	Continuing \ref{result2} of Proposition~\ref{prop:inequality} with the same notation, the left side of the inequality in \eqref{Temp:inequality0} can be written as 
	\begin{align*}
		\left(\left(\frac{\be_{i_0}^TA^2\bx}{\be_{i_0}^TA\bx}\right)I-A \right)\bx.
	\end{align*}
	Let $B=\left(\frac{\be_{i_0}^TA^2\bx}{\be_{i_0}^TA\bx}\right)I-A$. Suppose that $\frac{\be_{i_0}^TA^2\bx}{\be_{i_0}^TA\bx}>\rho(A)$. Then, $B$ is a non-singular $M$-matrix, and so $B$ is \textit{semipositive} (See \cite[Chapter 6]{berman1994nonnegative}). Let $K_B=\{ \by\in\mathbb{R}^n_+ | B\by\in\mathbb{R}^n_+\}$. The set $K_B$ is known as the so-called \textit{semipositive cone} of $B$, and $K$ is a proper polyhedral cone in $\mathbb{R}^n$ (see \cite{sivakumar2018semipositive}). Clearly, a Perron vector of $A$ is in $K_B$. Examining the proof of Proposition \ref{prop:inequality}, $B\bx$ contains at least one negative entry. Hence, $\bx\notin K_B$.
\end{remark}

As done in Proposition~\ref{prop:inequality}, one can establish the following with \ref{fact2} of Lemma \ref{lem:SumOfRatios}.

\begin{proposition}\label{prop:inequality2}
	Let $A$ be an $n\times n$ irreducible nonnegative matrix and $\bx\in\mathbb{R}_{++}^n$. Suppose that $\bx$ is not a Perron vector. Then, we have the following:
	\begin{enumerate}[label=(\arabic*)]
		\item If $\min_{i}\frac{\left(A^{2}\bx\right)_i}{\left(A\bx\right)_i}\geq \frac{\left(A\bx\right)_{j}}{\left(\bx\right)_{j}}$ for $j=1,\dots,n$, then $\min_{i}\frac{\left(A^{2}\bx\right)_i}{\left(A\bx\right)_i} > \min_{i}\frac{\left(A\bx\right)_i}{\left(\bx\right)_i}$.
		\item Suppose that there exists indices $i_0$ and $j_0$ such that $\frac{\left(A\bx\right)_{j_0}}{\left(\bx\right)_{j_0}}>\min_{i}\frac{\left(A^{2}\bx\right)_i}{\left(A\bx\right)_i}=\frac{\left(A^2\bx\right)_{i_0}}{\left(A\bx\right)_{i_0}}$ and $a_{i_0,j_0}\neq 0$. Then, $\min_{i}\frac{\left(A^{2}\bx\right)_i}{\left(A\bx\right)_i} > \min_{i}\frac{\left(A\bx\right)_i}{\left(\bx\right)_i}$.
	\end{enumerate}
\end{proposition}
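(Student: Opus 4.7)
My plan is to mirror the proof of \Cref{prop:inequality} line by line, replacing each $\max_i\frac{(A^k\bx)_i}{(A^{k-1}\bx)_i}$ by the corresponding $\min_i$, reversing each inequality that arises, and invoking part \ref{fact2} of \Cref{lem:SumOfRatios} wherever the original uses part \ref{fact1}. The two monotonicity halves of \Cref{thm:known} play symmetric roles, and the obstruction that rules out the equality case is again that $\bx$ would turn out to be a Perron vector.

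For part (1), the hypothesis $\min_i\frac{(A^2\bx)_i}{(A\bx)_i}\ge\frac{(A\bx)_j}{(\bx)_j}$ for every $j$ forces $\min_i\frac{(A^2\bx)_i}{(A\bx)_i}\ge\max_j\frac{(A\bx)_j}{(\bx)_j}$. Together with the lower monotonicity of \Cref{thm:known}, this gives the non-strict version. If equality $\min_i\frac{(A^2\bx)_i}{(A\bx)_i}=\min_j\frac{(A\bx)_j}{(\bx)_j}$ held, the squeeze would force $\max_j\frac{(A\bx)_j}{(\bx)_j}=\min_j\frac{(A\bx)_j}{(\bx)_j}$, i.e.\ $A\bx=\lambda\bx$ for some common $\lambda$, making $\bx$ a Perron vector and contradicting the hypothesis. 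Hence the inequality is strict.

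For part (2), I argue by contradiction. Assuming $\min_i\frac{(A^2\bx)_i}{(A\bx)_i}=\min_i\frac{(A\bx)_i}{(\bx)_i}$, fix $k_0$ attaining the right-hand minimum; since $\frac{(A\bx)_{j_0}}{(\bx)_{j_0}}$ is strictly larger, $k_0\neq j_0$. For each $j$, the inequality $\frac{(A^2\bx)_{i_0}}{(A\bx)_{i_0}}\le\frac{(A\bx)_j}{(\bx)_j}$ rearranges, following \eqref{Temp:inequality1}--\eqref{Temp:inequality} with signs reversed, to $\ba_j^T\bx\ge\frac{(\sum_k a_{i_0,k}\ba_k^T\bx)x_j}{\ba_{i_0}^T\bx}$, with strict inequality at $j=j_0$. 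Multiplying by $a_{i_0,j}\ge0$ and summing, the assumption $a_{i_0,j_0}>0$ gives the strict bound $\sum_l a_{i_0,l}\ba_l^T\bx>\sum_l a_{i_0,l}\frac{(\sum_k a_{i_0,k}\ba_k^T\bx)x_l}{\ba_{i_0}^T\bx}$. Plugging this into the $j=k_0$ instance $0\ge(\sum_l a_{i_0,l}\ba_l^T\bx)x_{k_0}-(\sum_l a_{i_0,l}x_l)(\ba_{k_0}^T\bx)$ and replacing the first factor by its strictly smaller lower bound (legitimate because $x_{k_0}>0$) and then factoring yields $0>\sum_l a_{i_0,l}x_l\bigl(\frac{\sum_k a_{i_0,k}\ba_k^T\bx}{\sum_k a_{i_0,k}x_k}x_{k_0}-\ba_{k_0}^T\bx\bigr)$. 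Since $k_0$ minimises $\frac{\ba_k^T\bx}{x_k}$ over $k$, \Cref{lem:SumOfRatios}\ref{fact2} delivers $\frac{\sum_k a_{i_0,k}\ba_k^T\bx}{\sum_k a_{i_0,k}x_k}\ge\frac{\ba_{k_0}^T\bx}{x_{k_0}}$, so the bracketed quantity is nonnegative and the right-hand side is $\ge0$, producing the contradiction $0>0$.

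The main pitfall is maintaining consistent sign-flip discipline: when substituting the smaller bound for $\sum_l a_{i_0,l}\ba_l^T\bx$ inside an expression multiplied by $x_{k_0}$, preservation of the strict direction leans on $x_{k_0}>0$, which is exactly the role of the hypothesis $\bx\in\RR_{++}^n$. Likewise, the final appeal to \Cref{lem:SumOfRatios}\ref{fact2} needs $k_0$ to be extremal for $\frac{\ba_k^T\bx}{x_k}$ across every $k$ with $a_{i_0,k}>0$, which is immediate from the choice of $k_0$ as a minimiser over all indices.
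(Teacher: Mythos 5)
Your proof is correct and follows essentially the same route as the paper: the paper's entire proof of \Cref{prop:inequality2} is the remark ``As done in Proposition~\ref{prop:inequality}, one can establish the following with \ref{fact2} of Lemma~\ref{lem:SumOfRatios},'' and your argument is precisely that systematic sign-flip, with the key inequality \eqref{Temp:inequality} reversed, strictness injected at $j_0$ via $a_{i_0,j_0}>0$, and \Cref{lem:SumOfRatios}\ref{fact2} supplying the final bound at the minimizer $k_0$.
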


We obtain the following corollaries from Propositions \ref{prop:inequality} and \ref{prop:inequality2}.

\begin{corollary}\label{cor:ineqality2}
	Let $A$ be an $n\times n$ irreducible nonnegative matrix and $\bx\in\mathbb{R}_{++}^n$. Suppose that $\bx$ is not a Perron vector. If there exists an index $i_1\in\{1,\dots,n\}$ such that $\frac{\left(A^2\bx\right)_{i_1}}{\left(A\bx\right)_{i_1}}=\max_{i}\frac{\left(A^{2}\bx\right)_i}{\left(A\bx\right)_i}$ and the $i_1^\text{th}$ row of $A$ consists of nonzero entries, then
	\begin{align*}
		\max_{i}\frac{\left(A^{2}\bx\right)_i}{\left(A\bx\right)_i}< \max_{i}\frac{\left(A\bx\right)_i}{\left(\bx\right)_i}.
	\end{align*}
	Similarly, if there exists an index $i_2\in\{1,\dots,n\}$ such that $\frac{\left(A^2\bx\right)_{i_2}}{\left(A\bx\right)_{i_2}}=\min_{i}\frac{\left(A^{2}\bx\right)_i}{\left(A\bx\right)_i}$ and the $i_2^\text{th}$ row of $A$ consists of nonzero entries, then
	\begin{align*}
		\min_{i}\frac{\left(A\bx\right)_i}{\left(\bx\right)_i}<\min_{i}\frac{\left(A^{2}\bx\right)_i}{\left(A\bx\right)_i}.
	\end{align*}
\end{corollary}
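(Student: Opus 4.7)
The plan is to reduce the corollary directly to \Cref{prop:inequality} (for the $\max$ statement) and to \Cref{prop:inequality2} (for the $\min$ statement) by a simple dichotomy on whether the hypothesis of part (i) or part (ii) of those propositions is satisfied. The crucial observation is that the full-support assumption on the $i_1$-th (resp.\ $i_2$-th) row of $A$ is tailor-made to supply the required $a_{i_0,j_0}\ne 0$ condition in part (ii) whenever part (i) fails.

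For the $\max$ statement, set $m:=\max_i\frac{(A^2\bx)_i}{(A\bx)_i}$ and consider two cases. First, if $m\le \frac{(A\bx)_j}{(\bx)_j}$ holds for every $j\in\{1,\dots,n\}$, then \Cref{prop:inequality}\ref{result1} applies immediately (the non-Perron hypothesis on $\bx$ is already assumed) and yields $m<\max_i\frac{(A\bx)_i}{(\bx)_i}$. Second, if this fails, then there is some $j_0$ with $\frac{(A\bx)_{j_0}}{(\bx)_{j_0}}<m$; choose $i_0:=i_1$, so that by hypothesis $\frac{(A^2\bx)_{i_0}}{(A\bx)_{i_0}}=m$ and $a_{i_0,j_0}=a_{i_1,j_0}\ne 0$ (since the $i_1$-th row of $A$ has no zero entries). \Cref{prop:inequality}\ref{result2} then delivers the desired strict inequality.

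The $\min$ statement is handled by the symmetric argument with $m':=\min_i\frac{(A^2\bx)_i}{(A\bx)_i}$: either $m'\ge \frac{(A\bx)_j}{(\bx)_j}$ for every $j$, triggering part (1) of \Cref{prop:inequality2}, or some $j_0$ violates this, in which case $i_0:=i_2$ together with $a_{i_2,j_0}\ne 0$ (from the nonzero-row assumption) triggers part (2).

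There is essentially no technical obstacle here; the whole point is to recognize that the role of the full-support row is precisely to guarantee the nonvanishing of $a_{i_0,j_0}$ no matter which violating index $j_0$ is produced by the failure of the part (i)/(1) hypothesis. The proof is therefore only a few lines, consisting of the case split and two invocations of the previous propositions.
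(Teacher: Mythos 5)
Your proof is correct and matches the route the paper intends: the paper states the corollary with only the remark that it follows from Propositions~\ref{prop:inequality} and~\ref{prop:inequality2}, and your explicit case split (either the hypothesis of part~\ref{result1} holds, or some $j_0$ violates it and the full-support row of $A$ supplies $a_{i_1,j_0}\neq 0$ so that part~\ref{result2} applies) is exactly the intended reduction.
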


\begin{corollary}\label{cor:ineqality}
	Let $A$ be a positive matrix, and $\bx\in\mathbb{R}_{++}^n$. If $\bx$ is not a Perron vector, then
	\begin{align*}
		\max_{i}\frac{\left(A^{2}\bx\right)_i}{\left(A\bx\right)_i}&< \max_{i}\frac{\left(A\bx\right)_i}{\left(\bx\right)_i},\\
		\min_{i}\frac{\left(A\bx\right)_i}{\left(\bx\right)_i}&<\min_{i}\frac{\left(A^{2}\bx\right)_i}{\left(A\bx\right)_i}.
	\end{align*}
\end{corollary}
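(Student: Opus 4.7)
The plan is to deduce Corollary~\ref{cor:ineqality} as an immediate consequence of Corollary~\ref{cor:ineqality2}. The key observation is that the only extra hypothesis required by Corollary~\ref{cor:ineqality2}, beyond $A$ being an irreducible nonnegative matrix and $\bx$ a positive non-Perron vector, is the existence of a row of $A$, indexed by some maximizer (resp. minimizer) $i_1$ (resp. $i_2$), all of whose entries are nonzero. When $A$ is a positive matrix this row condition is automatic for every row, so no separate verification is needed.

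More precisely, first I would note that a positive $n\times n$ matrix is irreducible (in fact primitive), so the hypotheses of Corollary~\ref{cor:ineqality2} on $A$ and $\bx$ are satisfied. Since the set $\{1,\dots,n\}$ is finite, the maximum $\max_i \frac{(A^2\bx)_i}{(A\bx)_i}$ is attained at some index $i_1$, and every entry of the $i_1^\text{th}$ row of $A$ is strictly positive, hence nonzero. Applying the first part of Corollary~\ref{cor:ineqality2} with this $i_1$ yields
\[
	\max_{i}\frac{(A^{2}\bx)_i}{(A\bx)_i} < \max_{i}\frac{(A\bx)_i}{(\bx)_i}.
\]

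For the second inequality I would repeat the argument with a minimizer: choose $i_2$ with $\frac{(A^2\bx)_{i_2}}{(A\bx)_{i_2}} = \min_i \frac{(A^2\bx)_i}{(A\bx)_i}$, observe that the $i_2^\text{th}$ row of $A$ is again all positive, and apply the second statement of Corollary~\ref{cor:ineqality2}. This gives
\[
	\min_{i}\frac{(A\bx)_i}{(\bx)_i} < \min_{i}\frac{(A^{2}\bx)_i}{(A\bx)_i},
\]
completing the proof. There is no real obstacle here since the corollary is essentially a specialization of the preceding result; the only thing worth emphasizing in the write-up is that positivity of $A$ automatically supplies the row-nonvanishing condition required by Corollary~\ref{cor:ineqality2}, so the result is a direct corollary rather than requiring any new computation.
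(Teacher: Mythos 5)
Your proposal is correct and matches the paper's intended route: the paper presents both corollaries as consequences of Propositions~\ref{prop:inequality} and~\ref{prop:inequality2}, and Corollary~\ref{cor:ineqality} is visibly the specialization of Corollary~\ref{cor:ineqality2} to the case where $A$ is positive (so that the row-nonvanishing hypothesis is automatic for any maximizer or minimizer). Your argument makes this reduction explicit, which is exactly what is needed.
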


Here is a condition for sequences $(a_k(A,\bx))_{k\geq 1}$ and $(b_k(A,\bx))_{k\geq 1}$ to be strictly decreasing and increasing, respectively.

\begin{theorem}\label{thm:UpperMonocity}
	Let $A$ be a positive matrix, and let $\bx$ be a positive vector. Then,
	\begin{enumerate}[label=(\roman*)]
		\item\label{case1} If $A^{r}\bx$ is not a Perron vector for $r\geq 0$, then $(a_k(A,\bx))_{k\geq 1}$ is strictly decreasing, and $(b_k(A,\bx))_{k\geq 1}$ is strictly increasing. Moreover, both converge to $\rho(A)$.
		\item\label{case2} If there exists the minimum integer $r_0\geq 0$ such that $A^{r_0}\bx,A^{r_0+1}\bx,\dots$ are Perron vectors of $A$, then
		\begin{align*}
			&b_1(A,\bx)<\cdots<b_{r_0}(A,\bx)=b_{r_0+1}(A,\bx)=\cdots=\rho(A),\\
			&a_1(A,\bx)>\cdots>a_{r_0}(A,\bx)=a_{r_0+1}(A,\bx)=\cdots=\rho(A).
		\end{align*}
	\end{enumerate}
\end{theorem}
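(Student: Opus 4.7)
The plan is to reduce both parts to a single step implication: for each $k \geq 1$, if $A^{k-1}\bx$ is not a Perron vector, then $a_{k+1}(A,\bx) < a_k(A,\bx)$ and $b_{k+1}(A,\bx) > b_k(A,\bx)$; while if $A^{k-1}\bx$ \emph{is} a Perron vector, then $a_k(A,\bx) = b_k(A,\bx) = \rho(A)$. The first assertion is exactly Corollary~\ref{cor:ineqality} applied with $\by := A^{k-1}\bx \in \RR_{++}^n$ (which is positive since $A$ is positive and $\bx$ is positive), after recognizing that its left-hand ratios become $a_{k+1},b_{k+1}$ and its right-hand ratios become $a_k,b_k$. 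The second is immediate: $A(A^{k-1}\bx) = \rho(A)\, A^{k-1}\bx$ makes every coordinate ratio $(A^k\bx)_i/(A^{k-1}\bx)_i$ equal to $\rho(A)$.

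For Part~\ref{case1}, the hypothesis is precisely that the non-Perron case applies at every $k \geq 1$, so strict monotonicity of $(a_k(A,\bx))_{k\geq 1}$ and $(b_k(A,\bx))_{k\geq 1}$ follows by a straightforward induction on $k$. For convergence to $\rho(A)$, a positive matrix is trivially primitive (take $N = 1$ in the definition), so Remark~\ref{Remark:convergence} delivers $a_k(A,\bx), b_k(A,\bx) \to \rho(A)$.

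For Part~\ref{case2}, I would split $k$ according to $r_0$. By the minimality of $r_0$, $A^{j}\bx$ fails to be a Perron vector for $0 \leq j < r_0$, so the non-Perron case of the step implication applies for each $k$ in the strictly monotone initial segment, chaining the strict inequalities together. For $k$ beyond the transition point, $A^{k-1}\bx$ is a Perron vector by hypothesis, and the Perron case of the step implication collapses the remainder of each chain to the constant value $\rho(A)$.

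The main difficulty is essentially upstream: the entire argument hinges on Corollary~\ref{cor:ineqality}, which in turn rests on Propositions~\ref{prop:inequality} and~\ref{prop:inequality2}. Once those are in hand, the theorem reduces to a clean case analysis. The only genuine care needed is tracking the transition index precisely, since the first $k$ for which $A^{k-1}\bx$ is a Perron vector is $k = r_0 + 1$, which is where the Perron clause of the step implication first applies; matching this with the indexing displayed in the statement is the only bookkeeping step that could mislead.
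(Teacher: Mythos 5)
Your proof follows the same route as the paper's: iterate Corollary~\ref{cor:ineqality} with $\by = A^{k-1}\bx$ for strict monotonicity, and appeal to primitivity of a positive matrix together with Remark~\ref{Remark:convergence} for convergence; the paper's own proof in fact only writes out case~\ref{case1} explicitly and leaves case~\ref{case2} as an evident adaptation. On the bookkeeping point you raised: since $A^{k-1}\bx$ first becomes a Perron vector at $k = r_0+1$, Collatz--Wielandt forces $a_{r_0}(A,\bx) > \rho(A)$ whenever $r_0 \geq 1$, so the chain of equalities in case~\ref{case2} should begin at $a_{r_0+1}$ rather than at $a_{r_0}$ (and likewise for the $b_k$'s) --- which is in fact what the example given immediately after the theorem exhibits with $r_0 = 1$.
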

\begin{proof}
	Let $\bx^{(r)}=A^r\bx$ for $r\geq 0$. Suppose that $\bx^{(r)}$ is not a Perron vector for $r\geq 0$. By Corollary \ref{cor:ineqality}, $\max_{i}\frac{\left(A^{2}\bx^{(r-1)}\right)_i}{\left(A\bx^{(r-1)}\right)_i}<\max_{i}\frac{\left(A\bx^{(r-1)}\right)_i}{\left(\bx^{(r-1)}\right)_i}$ and $\min_{i}\frac{\left(A\bx^{(r-1)}\right)_i}{\left(\bx^{(r-1)}\right)_i}<\min_{i}\frac{\left(A^{2}\bx^{(r-1)}\right)_i}{\left(A\bx^{(r-1)}\right)_i}$. Hence, $a_r(A,\bx)>a_{r+1}(A,\bx)$ and $b_r(A,\bx)<b_{r+1}(A,\bx)$. From Remark \ref{Remark:convergence}, the conclusion follows.
\end{proof}

\begin{remark}\label{remark:x is not a perron vector}
	Suppose that $A$ is invertible. If $A^r\by$ is an eigenvector of $A$ for some $r\geq 0$, then $\by$ is an eigenvector of $A$. Hence, if $\bx$ is not an eigenvector, then $A^r\bx$ is not an eigenvector for all $r\geq 0$.
\end{remark}

\begin{example}
	Let $A=\begin{bmatrix}
		2 & 1 & 1\\
		1 & 1 & 1\\
		1 & 1 & 1
	\end{bmatrix}$, and let $\bx=\begin{bmatrix}
		\sqrt{2}-1\\
		\frac{3}{2}-\sqrt{2}\\
		\frac{1}{2}
	\end{bmatrix}$. Then, it can be verified that $\bx$ is not a Perron vector, but $A\bx$ is a Perron vector. By \ref{case2} of Theorem \ref{thm:UpperMonocity}, $$b_1(A,\bx)<b_2(A,\bx)=b_3(A,\bx)=\cdots=\rho(A)=\cdots=a_3(A,\bx)=a_2(A,\bx)<a_1(A,\bx).$$ 
\end{example}

Here is other condition for sequences $(a_k(A,\bx))_{k\geq 1}$ and $(b_k(A,\bx))_{k\geq 1}$ to be strictly monotone and to generate strictly log-concave and log-convex sequences.

\begin{theorem}\label{thm:UpperMonocity2}
	Let $A$ be an $n\times n$ irreducible, nonnegative, positive semidefinite matrix and $\bx\in\RR_{++}^n$. From Proposition \ref{prop:log index}, we may define $i_1$ and $i_2$ to be a log-concavity index and log-convexity index, respectively. We let $g_k=(A^k\bx)_{i_1}$ and $h_k=(A^k\bx)_{i_2}$ for $k\geq 0$. Assume that $\be_{i_1}^TA$ and $\be_{i_2}^TA$ are positive, and $A^r\bx$ is not a Perron vector for $r\geq 0$. Then, the following hold:
	\begin{enumerate}[label=(\roman*)]
		\item There exists $k_1\geq 1$ such that $(a_k(A,\bx))_{k\geq k_1}$ is strictly decreasing and $(g_k)_{k\geq k_1-1}$ is strictly log-concave.
  		\item There exists $k_2\geq 1$ such that $(b_k(A,\bx))_{k\geq k_2}$ is strictly increasing and $(h_k)_{k\geq k_2-1}$ is strictly log-convex.
	\end{enumerate}
\end{theorem}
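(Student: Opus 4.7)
The plan is to combine the definition of a log-concavity (resp.\ log-convexity) index, Corollary \ref{cor:ineqality2}, and Proposition \ref{Prop:logconcavity index}. By the definition of log-concavity index, there exists an integer $K_1 \geq 1$ such that for every $k \geq K_1$,
\[
a_k(A,\bx) \;=\; \frac{(A^k\bx)_{i_1}}{(A^{k-1}\bx)_{i_1}} \;=\; \max_i \frac{(A^k\bx)_i}{(A^{k-1}\bx)_i}.
\]
Fix any $k \geq K_1$ and set $\by := A^{k-1}\bx \in \RR_{++}^n$. Then $A\by = A^k\bx$ and $A^2\by = A^{k+1}\bx$; since $k+1 \geq K_1$, the maximum of $(A^2\by)_i/(A\by)_i$ is attained at $i_1$. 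By hypothesis $\by = A^{k-1}\bx$ is not a Perron vector of $A$, and the assumption $\be_{i_1}^T A > 0$ says the $i_1$-th row of $A$ consists entirely of nonzero (in fact positive) entries. Applying Corollary \ref{cor:ineqality2} to $\by$ therefore gives
\[
a_{k+1}(A,\bx) \;=\; \max_i \frac{(A^2\by)_i}{(A\by)_i} \;<\; \max_i \frac{(A\by)_i}{(\by)_i} \;=\; a_k(A,\bx),
\]
so $(a_k(A,\bx))_{k \geq K_1}$ is strictly decreasing. Taking $k_1 := K_1$ and invoking Proposition \ref{Prop:logconcavity index}(i) then yields that $(g_k)_{k \geq k_1-1}$ is strictly log-concave, which proves (i).

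Part (ii) is entirely analogous. By the definition of log-convexity index, there is $K_2 \geq 1$ such that $b_k(A,\bx) = (A^k\bx)_{i_2}/(A^{k-1}\bx)_{i_2}$ realises the minimum for $k \geq K_2$. For $k \geq K_2$, setting $\by := A^{k-1}\bx$, the minimum of $(A^2\by)_i/(A\by)_i$ is attained at $i_2$, $\by$ is not a Perron vector, and $\be_{i_2}^T A > 0$. The second part of Corollary \ref{cor:ineqality2} then gives $b_k(A,\bx) < b_{k+1}(A,\bx)$ for all $k \geq K_2$, and Proposition \ref{Prop:logconcavity index}(ii) upgrades this to strict log-convexity of $(h_k)_{k \geq k_2-1}$ with $k_2 := K_2$.

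Overall, the proof is essentially a clean assembly of already-proved results; the only real bookkeeping issue is the one-step index shift between the definition of the log-concavity index (which controls where the max of $(A^k\bx)_i/(A^{k-1}\bx)_i$ sits) and the hypothesis of Corollary \ref{cor:ineqality2} (which needs the max of $(A^2\by)_i/(A\by)_i$ to sit at $i_1$). The hypotheses $\be_{i_j}^T A > 0$ are precisely what activates the row-positivity requirement of Corollary \ref{cor:ineqality2}, and the assumption that no iterate $A^r\bx$ is a Perron vector is exactly what makes the hypothesis of that corollary available for every $k \geq K_j$. No new estimate is required.
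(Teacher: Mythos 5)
Your proof is correct and follows essentially the same route as the paper: locate $K_1,K_2$ from the definition of log-concavity/convexity index, apply Corollary \ref{cor:ineqality2} to $\by=A^{k-1}\bx$ (with the hypotheses $\be_{i_j}^T A>0$ supplying the row-positivity and the assumption that no $A^r\bx$ is a Perron vector supplying the non-Perron condition), and finish with Proposition \ref{Prop:logconcavity index}. Your write-up is in fact slightly more careful than the paper's about the one-step index shift (verifying that the maximizer of $(A^2\by)_i/(A\by)_i$ is $i_1$ because $k+1\geq K_1$), but the content is the same.
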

\begin{proof}
	By Definition \ref{def:log index}, there exist $k_1,k_2\geq 1$ such that for $r\geq k_1$ and $s\geq k_2$, 
	$$\frac{(A^r\bx)_{i_1}}{(A^{r-1}\bx)_{i_1}}=\max_{i}\frac{(A^r\bx)_{i}}{(A^{r-1}\bx)_{i}},\;\text{and}\; \frac{(A^s\bx)_{i_2}}{(A^{s-1}\bx)_{i_2}}=\min_{i}\frac{(A^s\bx)_{i}}{(A^{s-1}\bx)_{i}}.$$
	Let $\bx^{(p)}=A^p\bx$ for $p\geq 0$. Suppose that $\bx^{(p)}$ is not a Perron vector for $p\geq 0$. By Corollary \ref{cor:ineqality2}, $\max_{i}\frac{\left(A^{2}\bx^{(p-1)}\right)_i}{\left(A\bx^{(p-1)}\right)_i}<\max_{i}\frac{\left(A\bx^{(p-1)}\right)_i}{\left(\bx^{(p-1)}\right)_i}$ for $r\geq k_1$ and $\min_{i}\frac{\left(A\bx^{(p-1)}\right)_i}{\left(\bx^{(p-1)}\right)_i}<\min_{i}\frac{\left(A^{2}\bx^{(p-1)}\right)_i}{\left(A\bx^{(p-1)}\right)_i}$ for $s\geq k_2$. 
	By \Cref{Prop:logconcavity index}, the remaining conclusion follows.
\end{proof}


Finally, we shall show the last result of this section, which is a condition for a sequence $(c_k(A,\bx))_{k\geq 1}$ to be strictly increasing and to generate a strictly log-convex sequence.

\begin{lemma}\label{lem:quotientineq}
	Let $A$ be an \( n\times n \) positive definite matrix and \( \bx\in\RR^n\backslash\{\mathbf{0}\} \). Then, for any integer \( r \),
	\begin{align}\label{eq:quotientineq}
		\frac{\bx^TA^r\bx}{\bx^TA^{r-1}\bx}\le \frac{\bx^TA^{r+1}\bx}{\bx^TA^{r}\bx},
	\end{align}
	where the equality holds if and only if $\bx$ is an eigenvector of $A$.
\end{lemma}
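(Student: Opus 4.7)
The plan is to reduce the claimed inequality to a single application of the Cauchy--Schwarz inequality after a change of variable that uses the positive-definite square root of $A$. Since $A$ is positive definite, it is invertible, and for any integer $r$ the matrix $A^{(r-1)/2}$ is well-defined via the spectral decomposition and is itself positive definite (hence invertible). Define
\begin{align*}
\by := A^{(r-1)/2}\bx.
\end{align*}
Then a direct calculation gives $\bx^T A^{r-1}\bx = \by^T\by$, $\bx^T A^r\bx = \by^T A\by$, and $\bx^T A^{r+1}\bx = \by^T A^2\by$. Consequently, \eqref{eq:quotientineq} is equivalent to
\begin{align*}
(\by^T A\by)^2 \;\le\; (\by^T\by)(\by^T A^2\by),
\end{align*}
and $\by\neq\mathbf{0}$ because $A^{(r-1)/2}$ is invertible and $\bx\neq\mathbf{0}$.

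Next I would apply Cauchy--Schwarz to the vectors $\bu=\by$ and $\bv=A\by$. Observing that $\bu^T\bv = \by^T A\by$, $\bu^T\bu = \by^T\by$, and $\bv^T\bv = \by^T A^2\by$, the Cauchy--Schwarz inequality $(\bu^T\bv)^2\le(\bu^T\bu)(\bv^T\bv)$ delivers precisely the displayed inequality. This establishes \eqref{eq:quotientineq}.

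For the equality case, Cauchy--Schwarz becomes an equality exactly when $\by$ and $A\by$ are linearly dependent, i.e.\ when $A\by=\lambda\by$ for some scalar $\lambda$. Substituting back $\by = A^{(r-1)/2}\bx$ and using the fact that $A$ commutes with its real powers, this is equivalent to $A^{(r-1)/2}(A\bx-\lambda\bx)=\mathbf{0}$. Since $A^{(r-1)/2}$ is invertible, this reduces to $A\bx=\lambda\bx$, showing equality holds in \eqref{eq:quotientineq} if and only if $\bx$ is an eigenvector of $A$.

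I do not expect any real obstacle here; the only point that warrants care is justifying that $A^{(r-1)/2}$ makes sense for every integer $r$ (including negative ones), which follows from positive-definiteness of $A$ via its spectral decomposition, and that this substitution preserves both the inequality and its equality condition. An alternative, equally direct route is to write $\bx=\sum_i c_i\bv_i$ in an orthonormal eigenbasis $\{\bv_i\}$ with eigenvalues $\lambda_i>0$, reducing \eqref{eq:quotientineq} to $\bigl(\sum_i \lambda_i^r c_i^2\bigr)^2\le\bigl(\sum_i \lambda_i^{r-1}c_i^2\bigr)\bigl(\sum_i \lambda_i^{r+1}c_i^2\bigr)$, and then applying Cauchy--Schwarz to the sequences $\lambda_i^{(r-1)/2}|c_i|$ and $\lambda_i^{(r+1)/2}|c_i|$; equality then forces $\lambda_i$ to be constant on the support of $(c_i)$, which is again the eigenvector condition.
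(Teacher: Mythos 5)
Your proof is correct, and it takes a genuinely different route from the paper's. The paper expands \(\bx=\sum_i\alpha_i\bv_i\) in an orthonormal eigenbasis of \(A\), computes \(\bx^TA^r\bx=\sum_i\alpha_i^2\lambda_i^r\), and rewrites the defect \((\bx^TA^{r-1}\bx)(\bx^TA^{r+1}\bx)-(\bx^TA^r\bx)^2\) via a Lagrange-identity-type expansion as the manifestly nonnegative sum \(\sum_{i<j}\alpha_i^2\alpha_j^2\lambda_i^{r-1}\lambda_j^{r-1}(\lambda_i-\lambda_j)^2\), reading the equality condition off directly from when each term vanishes. You instead make the substitution \(\by=A^{(r-1)/2}\bx\), reducing the claim to the single Cauchy--Schwarz inequality \((\by^TA\by)^2\le(\by^T\by)(\by^TA^2\by)\) for the pair \(\by\), \(A\by\), and then push the linear-dependence equality condition back through the invertible map \(A^{(r-1)/2}\). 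The square-root change of variable is the slicker mechanism and spares you the explicit sum-of-squares computation; the paper's approach is more elementary and exposes the exact value of the defect, which can be useful when one wants quantitative control rather than just the sign. (Your closing remark sketches a second route that is essentially the paper's, except you invoke Cauchy--Schwarz on the weighted sequences rather than the Lagrange identity --- the two are, of course, the same fact in disguise.) The only thing worth stating explicitly, which you gesture at, is that \(\bx^TA^{r-1}\bx>0\) and \(\bx^TA^r\bx>0\) for \(\bx\neq\mathbf{0}\) and \(A\) positive definite, so that cross-multiplying the displayed quotient inequality is legitimate for every integer \(r\).
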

\begin{proof}
Since \( A \) is positive definite, there exists an orthonormal basis \( \{\bv_1,\dots,\bv_n\} \) such that \(A\bv_i=\lambda_i\bv_i \) for some $\lambda_i>0$ for $i=1,\dots,n$. Let \( \bx=\sum_{i=1}^n\alpha_i\bv_i \). Then, for any integer \( r \), we have
\begin{align}\label{temp:eqn00}
 \bx^TA^r\bx=\left(\sum_{i=1}^n\alpha_i\bv_i^T\right)\left(\sum_{i=1}^n\alpha_iA^r\bv_i\right)=\left(\sum_{i=1}^n\alpha_i\bv_i^T\right)\left(\sum_{i=1}^n\alpha_i\lambda_i^r\bv_i\right)= \sum_{i=1}^n\alpha_i^2\lambda_i^r.
\end{align}
Note $\alpha_i\ge0$ and $\lambda_i>0$. Using \eqref{temp:eqn00}, one can verify that \eqref{eq:quotientineq} is equivalent to
\[ 
  \left(\sum_{i=1}^n\alpha_i^2\lambda_i^{r-1}\right)\left(\sum_{i=1}^n\alpha_i^2\lambda_i^{r+1}\right)-\left(\sum_{i=1}^n\alpha_i^2\lambda_i^r\right)^2=\sum_{1\le i<j\le n}\alpha_i^2\alpha_j^2\lambda_i^{r-1}\lambda_j^{r-1}\left(\lambda_i-\lambda_j\right)^2\ge0.
\] 

Let us consider the equality. We can find that \( \sum_{1\le i<j\le n}\alpha_i^2\alpha_j^2\lambda_i^{r-1}\lambda_j^{r-1}\left(\lambda_i-\lambda_j\right)^2=0 \) if and only if \( \lambda_s=\lambda_t \) whenever \( \alpha_s\neq0 \) and \( \alpha_t\neq0 \) for any \( s \) and \( t \) with \( s\neq t \). In other words, \( \bx=\sum_{i=1}^n\alpha_i\bv_i \) is a linear combination of eigenvectors corresponding to the same eigenvalue, that is, \( \bx \) is an eigenvector.
\end{proof}

\begin{remark}
	When we relax the condition on $A$ in Lemma \ref{lem:quotientineq} that $A$ is positive semidefinite and $A\neq \mathbf{O}$, one can obtain the same inequality \eqref{eq:quotientineq} by examining the proof, but we fail to obtain the same condition for the equality as in that lemma. For instance, given $A=\begin{bmatrix}
		1 & 0\\
		0 & 0
	\end{bmatrix}$ and $\bx=\begin{bmatrix}
		1 \\ 1
	\end{bmatrix}$, we have $\bx^TA^r\bx=1$ for $r\geq 1$.
\end{remark}

\begin{theorem}\label{Cor:LowerMonocity}
	Let $A$ be a nonnegative, positive definite matrix, and $\bx\in\RR_{+}^n$. If $\bx$ is not a Perron vector, then $(c_k(A,\bx))_{k\ge1}$ is strictly increasing and convergent to $\rho(A)$.
\end{theorem}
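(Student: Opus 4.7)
The plan is to apply Lemma~\ref{lem:quotientineq} directly with the given $\bx$, once for each integer $r\ge 1$. Since $A$ is positive definite and $\bx\neq \mathbf{0}$ (otherwise $c_k(A,\bx)$ would not even be defined), the lemma immediately yields
\[
c_r(A,\bx) \;=\; \frac{\bx^T A^r \bx}{\bx^T A^{r-1}\bx} \;\le\; \frac{\bx^T A^{r+1}\bx}{\bx^T A^r\bx} \;=\; c_{r+1}(A,\bx),
\]
with equality if and only if $\bx$ is an eigenvector of $A$. Thus $(c_k(A,\bx))_{k\ge 1}$ is automatically nondecreasing.

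To upgrade nondecreasing to strictly increasing, I would argue that under the hypotheses $\bx$ cannot be any eigenvector of $A$. The key observation is the standard Perron--Frobenius-type fact that any nonnegative eigenvector of a nonnegative matrix whose Perron eigenspace contains a positive vector must itself lie in the Perron eigenspace (concretely: distinct eigenspaces of the symmetric $A$ are orthogonal, so a nonnegative eigenvector is orthogonal to, hence supported off, a positive Perron vector). Because $\bx\in\RR^n_+$ is not a Perron vector, it follows that $\bx$ is not an eigenvector at all, so each inequality above is strict and the sequence is strictly increasing. Convergence of $c_k(A,\bx)$ to $\rho(A)$ is already recorded in Remark~\ref{Remark:convergence2} and can simply be cited; alternatively, one may read it off from the spectral expansion $c_k(A,\bx)=\bigl(\sum_i\alpha_i^2\lambda_i^k\bigr)/\bigl(\sum_i\alpha_i^2\lambda_i^{k-1}\bigr)$ derived inside the proof of Lemma~\ref{lem:quotientineq}, since the largest $\lambda_i$ with $\alpha_i\neq 0$ dominates both numerator and denominator.

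The main obstacle I anticipate is precisely the bridge from the hypothesis ``$\bx$ is not a Perron vector'' to the equality condition ``$\bx$ is not an eigenvector'' used in Lemma~\ref{lem:quotientineq}; this is the only place where the nonnegativity of $A$ and of $\bx$ really enters. Once that Perron--Frobenius observation is in place, the remainder of the argument is a direct quotation of Lemma~\ref{lem:quotientineq} together with Remark~\ref{Remark:convergence2}.
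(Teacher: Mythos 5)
Your proposal follows the paper's route: invoke Lemma~\ref{lem:quotientineq} for monotonicity and, through its equality clause, strictness, then Remark~\ref{Remark:convergence2} for the limit. You correctly identify the step the paper's one-line proof leaves silent: the lemma's equality case asks that $\bx$ not be an \emph{eigenvector}, while the theorem only hypothesizes that $\bx$ is not a \emph{Perron} vector.

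The bridge you propose, however, is stated conditionally---``whose Perron eigenspace contains a positive vector''---and you never verify that condition, which is not implied by the stated hypotheses. It holds when $A$ is irreducible, but irreducibility is not assumed here, and in its absence the theorem as written can actually fail: take $A=\mathrm{diag}(2,1)$ and $\bx=(0,1)^T$. Then $A$ is nonnegative and positive definite, $\bx\in\RR^2_+$ is an eigenvector for the eigenvalue $1$, hence not a Perron vector; yet $c_k(A,\bx)=1$ for every $k$, constant and converging to $1<\rho(A)=2$. The same example shows that Remark~\ref{Remark:convergence2} also tacitly assumes $\bx$ has a nonzero Perron component. So the gap you flag is genuine, and it is shared by the paper's own terse argument; your fix closes it exactly when $A$ is irreducible, which is what actually holds in the applications (Theorems~\ref{thm:sequence for M} and~\ref{thm:sequence for Q}). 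An alternative repair is to strengthen the hypothesis to $\bx\in\RR_{++}^n$: a strictly positive eigenvector of a nonnegative symmetric matrix is necessarily a Perron vector, since pairing it with a nonnegative Perron vector $\bv$ gives $\lambda\,\bv^T\bx=\bv^TA\bx=\rho(A)\,\bv^T\bx$ with $\bv^T\bx>0$, forcing $\lambda=\rho(A)$.
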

\begin{proof}
	The conclusion follows from Lemma \ref{lem:quotientineq} and \Cref{Remark:convergence2}.
\end{proof}

\begin{theorem}\label{Thm:log-concave from c_k}
	Let $A$ be an $n\times n$ nonnegative positive semidefinite matrix and $\bx\in\RR_{+}^n$. Let $s_k=\bx^TA^k\bx$ for $k\geq 0$. Then, $(s_k)_{k\geq 0}$ is log-convex. In particular, if $A$ is positive definite and $\bx$ is not a Perron vector, then $(s_k)_{k\geq 0}$ is strictly log-convex.
\end{theorem}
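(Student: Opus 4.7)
My plan is to deduce this theorem immediately from Proposition~\ref{prop:log-concave from c_k} together with the monotonicity facts for $(c_k(A,\bx))_{k\geq 1}$ established earlier. Recall that, by definition, $s_{k-1}s_{k+1}\geq s_k^2$ is precisely the inequality $c_k(A,\bx)\leq c_{k+1}(A,\bx)$ (whenever the ratios are defined), so log-convexity of $(s_k)$ is equivalent to monotonicity of $(c_k)$ once the ratios make sense.

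For the first claim, I would appeal to the observation already recorded in the Remark following Lemma~\ref{lem:quotientineq}: inspecting that lemma's proof shows that inequality~\eqref{eq:quotientineq}, namely $\bx^TA^{r-1}\bx\cdot\bx^TA^{r+1}\bx\geq(\bx^TA^r\bx)^2$, holds whenever $A$ is nonnegative positive semidefinite and $\bx\in\RR_+^n$. This is exactly $s_{r-1}s_{r+1}\geq s_r^2$ for every $r\geq 1$, so $(s_k)_{k\geq 0}$ is log-convex. If one prefers a self-contained derivation, one can redo the spectral decomposition of Lemma~\ref{lem:quotientineq}: writing $\bx=\sum_i\alpha_i\bv_i$ with $A\bv_i=\lambda_i\bv_i$ and $\lambda_i\geq 0$ yields $s_k=\sum_i\alpha_i^2\lambda_i^k$, and an elementary symmetrization gives
\[
 s_{k-1}s_{k+1}-s_k^2=\sum_{1\leq i<j\leq n}\alpha_i^2\alpha_j^2(\lambda_i\lambda_j)^{k-1}(\lambda_i-\lambda_j)^2\geq 0
\]
for all $k\geq 1$.

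For the strict conclusion, under the stronger hypotheses that $A$ is positive definite and $\bx$ is not a Perron vector, Theorem~\ref{Cor:LowerMonocity} guarantees that $(c_k(A,\bx))_{k\geq 1}$ is strictly increasing, and then the strict half of Proposition~\ref{prop:log-concave from c_k} yields that $(s_k)_{k\geq 0}$ is strictly log-convex.

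There is no real obstacle; the theorem is almost a packaging of earlier results. The only point deserving a sentence of care is that Proposition~\ref{prop:log-concave from c_k} implicitly assumes $s_{k-1}>0$ so that $c_k$ is defined. In the semidefinite case this can fail (e.g., when $A\bx=\mathbf{0}$), but then $s_k=0$ for every $k\geq 1$ and the log-convexity inequality degenerates to $0\geq 0$; the spectral expansion above covers this and all other cases uniformly, which is the route I would take in the write-up.
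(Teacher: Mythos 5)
Your proposal is correct and follows essentially the same route as the paper, whose proof of this theorem is the single sentence ``It follows from Proposition~\ref{prop:log-concave from c_k} and Theorem~\ref{Cor:LowerMonocity}.'' Your write-up is a little more careful in two respects that the paper glosses over: you explicitly invoke the remark following Lemma~\ref{lem:quotientineq} (or equivalently redo its spectral computation in the semidefinite case) to secure the non-strict monotonicity of $(c_k(A,\bx))_{k\geq 1}$, since Theorem~\ref{Cor:LowerMonocity} only treats the positive definite case; and you flag that Proposition~\ref{prop:log-concave from c_k} implicitly needs $s_{k-1}>0$, which may fail when $A$ is merely semidefinite, and correctly observe that the identity $s_{k-1}s_{k+1}-s_k^2=\sum_{i<j}\alpha_i^2\alpha_j^2(\lambda_i\lambda_j)^{k-1}(\lambda_i-\lambda_j)^2\geq 0$ sidesteps this degeneracy uniformly.
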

\begin{proof}
	It follows from \Cref{prop:log-concave from c_k} and \Cref{Cor:LowerMonocity}.
\end{proof}

\section{Combinatorial applications}\label{Sec3:applications}

As seen in Theorems \ref{thm:UpperMonocity}, \ref{thm:UpperMonocity2} and \ref{Cor:LowerMonocity}, upon extra conditions on a nonnegative matrix $A$, we can find (strictly) monotone sequences of lower and upper bounds on the Perron value of $A$ that may induce log-concave or log-convex sequences. With those sequences, we consider two combinatorial applications in this section.

\subsection{Lower and upper bounds on Perron values of rooted trees}\label{Subsec:3.1}

As discussed in \Cref{subsec 1.1}, we shall give lower and upper bounds on Perron values of rooted trees, which can be used for estimating characteristic sets of trees. In \cite{andrade2017combinatorial}, the authors explore such lower and upper bounds with combinatorial interpretation---that is, one can attain those bounds by considering combinatorial objects. Here, we provide sharper bounds in combinatorial settings that one may regard as a generalized version, though it still needs to understand combinatorial interpretation. Besides, we shall verify that bounds obtained from computation of small powers of bottleneck matrices or ``neckbottle matrices'' are sharper than known bounds.

We first consider a weighted connected graph $G$ on $n$ vertices as a general case, and then focus on unweighted rooted trees. Let $M$ be the bottleneck matrix at a vertex $v$ in $G$. It is well known that each principal submatrix of the Laplacian matrix $L(G)$ is non-singular and its inverse is positive definite (see \cite[Chapter 6]{berman1994nonnegative}). Since the Perron value of $M$ is determined by the Perron value of Perron components at $v$, we shall assume that $v$ is not a cut-vertex; otherwise, $M$ would be a block diagonal matrix, which is not irreducible.

\begin{theorem}\label{thm:sequence for M}
	Let $M$ be the bottleneck matrix at a vertex $v$ of a weighted, connected graph $G$ on $n$ vertices. Let $\bx\in\RR_{++}^n$. Suppose that $v$ is not a cut-vertex and $\bx$ is not a Perron vector. Then, $(a_k(M,\bx))_{k\ge1}$ is a strictly decreasing sequence convergent to $\rho(M)$, whereas $(b_k(M,\bx))_{k\ge1}$ and $(c_k(M,\bx))_{k\ge1}$ are strictly increasing sequences convergent to $\rho(M)$.
\end{theorem}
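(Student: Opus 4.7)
The plan is to reduce this theorem to a direct application of the main results of Section 2 (specifically \Cref{thm:UpperMonocity}\ref{case1} and \Cref{Cor:LowerMonocity}). The only preparatory work is to verify that $M$ satisfies the hypotheses of those theorems, namely that $M$ is entrywise positive, positive definite, and that $M^r\bx$ is never a Perron vector.

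First I would establish the structural properties of $M$. Let $L_v$ denote the principal submatrix of $L(G)$ obtained by deleting the row and column indexed by $v$; then $M = L_v^{-1}$. Since $G$ is connected and $v$ is not a cut-vertex, the graph $G - v$ is connected, and so $L_v$ is a symmetric, irreducible, non-singular $M$-matrix. The authors already remark (citing \cite{berman1994nonnegative}) that $L_v^{-1}$ is positive definite. Moreover, a standard fact for irreducible non-singular $M$-matrices is that their inverses are entrywise positive; thus $M$ is a positive, symmetric, positive definite matrix, hence in particular irreducible and invertible.

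Next I would handle the ``not a Perron vector'' condition at every iterate. Since $M$ is invertible and $\bx$ is not a Perron vector of $M$, \Cref{remark:x is not a perron vector} gives that $M^r\bx$ is not a Perron vector of $M$ for any $r \geq 0$. With all the hypotheses of \Cref{thm:UpperMonocity}\ref{case1} verified (namely $M$ positive, $\bx$ positive, and $M^r\bx$ not a Perron vector for $r\geq 0$), that theorem immediately yields that $(a_k(M,\bx))_{k\geq 1}$ is strictly decreasing and $(b_k(M,\bx))_{k\geq 1}$ is strictly increasing, both converging to $\rho(M)$.

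For the remaining sequence, since $M$ is nonnegative and positive definite, $\bx \in \RR^n_{++} \subseteq \RR^n_+$, and $\bx$ is not a Perron vector of $M$, \Cref{Cor:LowerMonocity} applies directly and gives that $(c_k(M,\bx))_{k\geq 1}$ is strictly increasing and convergent to $\rho(M)$. There is no genuine obstacle in the proof; the only mild subtlety is citing the correct $M$-matrix fact to conclude that $M$ itself (not merely $L_v$) is entrywise positive, which is needed to apply \Cref{thm:UpperMonocity} rather than only \Cref{thm:UpperMonocity2}.
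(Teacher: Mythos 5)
Your proposal is correct and follows essentially the same approach as the paper: show that $M$ is a positive, symmetric, positive definite matrix, invoke \Cref{remark:x is not a perron vector} to propagate the non-Perron-vector condition to all iterates, and then apply \Cref{thm:UpperMonocity}\ref{case1} and \Cref{Cor:LowerMonocity} together with the convergence remarks. Your version merely spells out the $M$-matrix argument for the positivity of $M$ (which the paper states more tersely), and the small leap from ``not a Perron vector'' to ``not an eigenvector'' needed to apply \Cref{remark:x is not a perron vector} is handled implicitly in both write-ups and is immediate for a positive vector and an irreducible nonnegative matrix.
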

\begin{proof}
	Note that $M$ is a positive matrix. By \Cref{remark:x is not a perron vector}, $M^r\bx$ is not a Perron vector for $r\geq 0$. It is straightforward from Theorems \ref{thm:UpperMonocity} and \ref{Cor:LowerMonocity} with Remarks \ref{Remark:convergence} and \ref{Remark:convergence2}.
\end{proof}

\begin{figure}
	\begin{center}
			\begin{tikzpicture}
				\tikzset{enclosed/.style={draw, circle, inner sep=0pt, minimum size=.10cm, fill=black}}

				\node[enclosed, label={left, yshift=0cm: $1$}] (v_1) at (0,0.9) {};
				\node[enclosed, label={left, xshift=0.1cm: $2$}] (v_2) at (-.6,1.6) {};
				\node[enclosed, label={right, xshift=-.1cm: $4$}] (v_3) at (.6,1.6) {};
				\node[enclosed, label={left, xshift=.1cm: $3$}] (v_4) at (-.6,2.4) {};
				\node[label={left: $\cT$}] (T) at (-1.5,1.7) {};
				\node[label={right: \( N=\begin{bmatrix}
					1 & 1 & 1 & 1 \\
					0 & 1 & 1 & 0 \\
					0 & 0 & 1 & 0 \\
					0 & 0 & 0 & 1 
				\end{bmatrix}\)}] (N) at (1.5,1.7) {};
		
				\draw (v_1) -- (v_2);
				\draw (v_1) -- (v_3);
				\draw (v_2) -- (v_4);
		
				\path[draw=black] (v_1) circle[radius=0.1];
				\end{tikzpicture}
	\end{center}
	\caption{An example for path matrix of $\cT$ with root $1$.}\label{Fig:path matrix}
\end{figure}
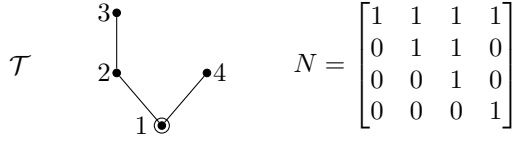


Let $M$ be the bottleneck matrix of a rooted tree $\cT$ with vertex set $\{1,\dots,n\}$ and root $x$. We shall introduce other object having the same Perron value as $\rho(M)$ that we can also use for approximating $\rho(M)$. The \textit{path matrix} of $\cT$, denoted by \( N \), is the matrix whose $j^\text{th}$ column is the $(0,1)$ vector where the $i^\text{th}$ component is $1$, if $i$ lies on the path from $j$ to $x$, and $0$ otherwise. See Figure \ref{Fig:path matrix} for an example. Then, $M$ can be written as $M=N^TN$ (see \cite{andrade2017combinatorial}). Appropriately labelling the vertices, we can find that the matrix $N$ is upper triangular with ones on the main diagonal and the row of $N$ corresponding to $x$ is the all ones vector. The matrix $Q=NN^T$ is called the \textit{neckbottle matrix} of $\cT$, which was introduced in \cite{ciardo2021perron}. Then, $Q$ is positive definite, but is not a positive matrix.

\begin{theorem}\label{thm:sequence for Q}
	Let $\cT$ be a rooted tree on $n$ vertices and $Q$ be its neckbottle matrix. Let $\bx\in\RR_{++}^n$. Suppose that $\bx$ is not a Perron vector. Then, \( (c_k(Q,\bx))_{k\ge1} \) is a strictly increasing sequence convergent to $\rho(Q)$.
\end{theorem}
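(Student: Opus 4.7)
The plan is to derive this as a direct application of \Cref{Cor:LowerMonocity} to the matrix $Q$. That corollary requires three hypotheses: $Q$ must be nonnegative, $Q$ must be positive definite, and the starting vector $\bx\in\RR_+^n$ must fail to be a Perron vector of $Q$. The hypothesis $\bx\in\RR_{++}^n\subset\RR_+^n$, together with the explicit assumption that $\bx$ is not a Perron vector, takes care of the last condition.

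To verify the first two conditions I would unpack the definition $Q=NN^T$. The path matrix $N$ has entries in $\{0,1\}$, so entrywise $Q=NN^T$ is manifestly nonnegative. For positive definiteness, recall that after an appropriate relabelling of the vertices (placing them in a topological order rooted at $x$) the matrix $N$ is upper triangular with ones on the diagonal, hence $\det N=1$; consequently $Q=NN^T$ is positive definite. These observations are already noted in the paragraph preceding the statement, so no new work is needed. Invoking \Cref{Cor:LowerMonocity} then gives that $(c_k(Q,\bx))_{k\ge1}$ is strictly increasing, and \Cref{Remark:convergence2} (applicable since $Q$ is nonnegative and positive semidefinite) delivers convergence to $\rho(Q)$, which is exactly the claim.

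There is essentially no obstacle here; the theorem is a direct corollary of results already proved in \Cref{Sec2:sequences}. Its value lies in the observation that even though $Q$ is not a positive matrix (so \Cref{thm:sequence for M} does not apply to $Q$), the positive definiteness of $Q$ alone suffices to run the Rayleigh-quotient-based sequence $c_k$. This yields a computationally useful lower-bound scheme for $\rho(Q)=\rho(M)$ based on the neckbottle matrix, complementing the bottleneck-matrix approach in \Cref{thm:sequence for M}.
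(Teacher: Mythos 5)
Your proof is correct and matches the paper's own argument exactly: the paper also derives \Cref{thm:sequence for Q} as an immediate consequence of \Cref{Cor:LowerMonocity}, having already recorded (in the paragraph introducing $Q$) that $Q=NN^T$ is nonnegative and positive definite. The only difference is that you spell out the verification of those hypotheses, which the paper takes as read.
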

\begin{proof}
	 It follows from Theorem \ref{Cor:LowerMonocity}.
\end{proof}


\begin{remark}
	Let $M$ be the bottleneck matrix of a rooted tree. In \cite{andrade2017combinatorial}, $\|M\|_1$ appears as an upper bound on $\rho(M)$, where $\|\cdot\|_1$ is the $\ell_1$ matrix norm. Note that $\|M\|_1=\max_i\frac{(M\mathbf{1})_i}{(\mathbf{1})_i}=a_1(M,\mathbf{1})$. We can see from \Cref{thm:sequence for M} that $a_k(M,\mathbf{1})<\|M\|_1$ for $k\geq 2$. 
\end{remark}

\begin{remark}
	Let $N$ and $M$ be the path matrix and the bottleneck matrix of a rooted tree, respectively. In \cite{andrade2017combinatorial}, two lower bounds on $\rho(M)$ appear: one is the \textit{combinatorial Perron value} $\rho_c(N)$ given by $\rho_c(N)=\frac{\mathbf{1}^T(NN^T)^2\mathbf{1}}{\mathbf{1}^TNN^T\mathbf{1}}$, and the other is given by $\pi(N)=\left(\frac{\mathbf{1}^T(NN^T)^3\mathbf{1}}{\mathbf{1}^TNN^T\mathbf{1}}\right)^\frac{1}{2}$. Let $Q$ be the neckbottle matrix. Then, $\rho_c(N)=\frac{\mathbf{1}^TQ^2\mathbf{1}}{\mathbf{1}^TQ\mathbf{1}}=c_2(Q,\mathbf{1})$ and $\pi(N)=\left(\frac{\mathbf{1}^TQ^3\mathbf{1}}{\mathbf{1}^TQ\mathbf{1}}\right)^{1/2}$. By \Cref{thm:sequence for Q}, we have $\frac{\mathbf{1}^TQ^2\mathbf{1}}{\mathbf{1}^TQ\mathbf{1}}<\frac{\mathbf{1}^TQ^3\mathbf{1}}{\mathbf{1}^TQ^2\mathbf{1}}$. Multiplying both sides by $\frac{\mathbf{1}^TQ^2\mathbf{1}}{\mathbf{1}^TQ\mathbf{1}}$ and taking the square root of both sides yield $\rho_c(N)<\pi(N)$. Hence, 
	$$\frac{\mathbf{1}^TQ^2\mathbf{1}}{\mathbf{1}^TQ\mathbf{1}}< \left(\frac{\mathbf{1}^TQ^3\mathbf{1}}{\mathbf{1}^TQ\mathbf{1}}\right)^{1/2}=\left(\frac{\mathbf{1}^TQ^2\mathbf{1}}{\mathbf{1}^TQ\mathbf{1}}\frac{\mathbf{1}^TQ^3\mathbf{1}}{\mathbf{1}^TQ^2\mathbf{1}}\right)^{1/2}< \frac{\mathbf{1}^TQ^3\mathbf{1}}{\mathbf{1}^TQ^2\mathbf{1}}.$$
	Therefore, $\rho_c(N)<\pi(N)<c_k(Q,\mathbf{1})$ for $k\geq 3$.
\end{remark}

In addition to the bounds in the above remarks, the author of \cite{molitierno2018tight} investigated a tight upper bound on Perron values of ``rooted brooms'' with some particular root, by virtue of the fact that for any unweighted and connected graph, Perron values of bottleneck matrices at vertices with the same eccentricity as that of the root in the broom are bounded below. Here we provide a tighter upper bound than the one in \cite{molitierno2018tight} except for a few cases. Moreover, we give lower bounds on Perron values of rooted trees, provided their roots have the same eccentricity. 

Let $B(d,r)$ denote the tree, called a \textit{broom}, formed from a path on $d$ vertices by adding $r$ pendent vertices to an end-vertex $v$ of the path. If $v$ is designated as the root, then we use $B_1(d,r)$ to denote the corresponding rooted tree (see \Cref{Figure:Broom2}); and if the other end-vertex of the path chosen as the root, then $B_2(d,r)$ denotes the resulting rooted tree (see \Cref{Figure:Broom1}). 

For two nonnegative matrices $A$ and $B$, we use the notation $A\geq B$ to mean that $A$ is entry-wise greater than or equal to $B$, that is, $A-B$ is nonnegative. If $A\geq B$ then $\rho(A)\geq\rho(B)$ (see \cite{horn2012matrix}). Let $M$ be the bottleneck matrix of a rooted tree on $n$ vertices such that the eccentricity of its root is $d$. Let $r=n-d$. Let $M_1$ and $M_2$ be the bottleneck matrices of $B_1(d,r)$ and $B_2(d,r)$, respectively. As mentioned in \cite{kirkland1997algebraic}, one can prove $M\leq M_2$ by using induction on $r$; similarly, it can be also verified that $M_1\leq M$. 

We now provide a sharper upper bound on $\rho(M_2)$, and lower bounds on $\rho(M_1)$.



\begin{theorem}\label{thm:upperForPerron}
	Let $G$ be an unweighted, connected graph on $n+1$ vertices with a vertex $v$, and let $M$ be the bottleneck matrix at $v$. Suppose that $d+1$ is the eccentricity of $v$. Let $M_2$ be the bottleneck matrix of $B_2(d,r)$ where $r=n-d$. Then, an upper bound $a_3(M_2,\mathbf{1})$ on $\rho(M)$ is given by
	\begin{align}\label{UpperBound for rho T}
		a_3(M_2,\mathbf{1})=\frac{S_1(d,r)}{S_2(d,r)},
	\end{align}
	where
	\begin{align*}
		S_1(d,r)=&d^3r^3 +\frac{1}{6}d^2(7d^2 + 9d + 20)r^2+\frac{1}{120}d(61d^4 + 140d^3 + 315d^2 + 280d + 404)r\\
		&+\frac{1}{720}(61d^6 + 183d^5 + 385d^4 + 465d^3 + 634d^2 + 432d + 720),\\
		S_2(d,r)=&d^2r^2 +\frac{1}{6}d(5d^2 + 6d + 13)r+\frac{1}{24}(5d^4 + 10d^3 + 19d^2 + 14d + 24).
	\end{align*}
\end{theorem}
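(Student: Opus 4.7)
The argument divides into three parts: (a) the chain of inequalities $\rho(M)\le\rho(M_2)\le a_3(M_2,\mathbf{1})$; (b) an explicit computation of $M_2^k\mathbf{1}$ for $k\le3$ exploiting the pendant symmetry of $B_2(d,r)$; and (c) identifying which index realises $\max_i (M_2^3\mathbf{1})_i/(M_2^2\mathbf{1})_i$. The first is immediate: $M\le M_2$ entrywise (noted just before the theorem), so Perron monotonicity yields $\rho(M)\le\rho(M_2)$, and applying \Cref{CollatzWielandt} to the positive matrix $M_2$ at the positive vector $M_2^2\mathbf{1}$ gives $\rho(M_2)\le a_3(M_2,\mathbf{1})$. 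So the substantive content of the theorem is the explicit evaluation $a_3(M_2,\mathbf{1})=S_1(d,r)/S_2(d,r)$.

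For the computation, label the vertices of $B_2(d,r)$ so that $1,\ldots,d$ form the path (with root $1$ and bushy end $d$) and $d+1,\ldots,d+r$ are the pendants attached to $d$. Then $(M_2)_{i,j}=\min(i,j)$ for $i,j\le d$, $(M_2)_{i,j}=i$ for $i\le d$ with $j$ a pendant, $(M_2)_{i,j}=d$ between distinct pendants, and $(M_2)_{j,j}=d+1$ on the pendant diagonal. Since the $r$ pendants are interchangeable, $(M_2^k\mathbf{1})_i$ takes a common value $q^{(k)}$ on pendants; set $p_i^{(k)}=(M_2^k\mathbf{1})_i$ for $1\le i\le d$, with $p_i^{(0)}=q^{(0)}=1$. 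The structure of $M_2$ gives the recurrences
\begin{align*}
	p_i^{(k+1)}=\sum_{j=1}^{d}\min(i,j)\,p_j^{(k)}+ir\,q^{(k)},\qquad q^{(k+1)}=\sum_{j=1}^{d}j\,p_j^{(k)}+(dr+1)\,q^{(k)}.
\end{align*}
Iterating these to $k=3$ and evaluating the resulting polynomial sums in $j$ (via the standard formulas for $\sum_j j$, $\sum_j j^2$, $\sum_j j^3$, etc.) yields closed forms for $p_i^{(k)}$ and $q^{(k)}$; one then verifies directly that $q^{(2)}=S_2(d,r)$ and $q^{(3)}=S_1(d,r)$. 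This step is elementary but lengthy, exactly the kind of computation the authors defer to \Cref{appendix}.

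It remains to show the maximum is achieved on the pendants. The decisive identity is that for any vector $\bx$ and any pendant $j$, the row structure of $M_2$ gives $(M_2\bx)_j-(M_2\bx)_d=x_j$; applied to $\bx=M_2^{k-1}\mathbf{1}$ this says $q^{(k)}=p_d^{(k)}+q^{(k-1)}$, so in particular $q^{(k)}>p_d^{(k)}$ and $q^{(k)}$ is a telescoping sum of the $p_d^{(j)}$'s. Using this together with the closed forms from the previous step, one verifies by direct algebra that $q^{(3)}/q^{(2)}\ge p_i^{(3)}/p_i^{(2)}$ for every $i\in\{1,\ldots,d\}$; the base case $p_i^{(1)}=\tfrac{1}{2}i(2d+1+2r-i)$ is increasing in $i$ on $\{1,\ldots,d\}$ and this monotonicity of the ratios propagates through the first recurrence. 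Substituting then gives $a_3(M_2,\mathbf{1})=q^{(3)}/q^{(2)}=S_1(d,r)/S_2(d,r)$. I expect the main obstacle to be this last step: while each ratio comparison reduces to a polynomial inequality in $d$ and $r$, verifying these uniformly in $d,r\ge 1$ is delicate, and it is this requirement that forces the explicit formulas of Step~2 to be computed in full rather than merely to leading order.
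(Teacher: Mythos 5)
Your outline matches the paper's Appendix A.1 almost exactly: same labelling of $B_2(d,r)$, same block structure of $M_2$, the same recurrences (the paper writes $\sum_j\min(l,j)m_j^{(k-1)}$ as the double sum $\sum_{i=1}^l\sum_{j=i}^d m_j^{(k-1)}$ and the pendant case as $m_d^{(k)}+m_{d+1}^{(k-1)}$, which is identically your $q^{(k)}=p_d^{(k)}+q^{(k-1)}$), and the same reduction to showing that the pendant index realizes $\max_i (M_2^3\mathbf{1})_i/(M_2^2\mathbf{1})_i$. One genuine improvement in your version: invoking Collatz--Wielandt directly at $\by=M_2^2\mathbf{1}$ gives $\rho(M_2)\le a_3(M_2,\mathbf{1})$ unconditionally, whereas the paper's appeal to \Cref{thm:sequence for M} technically requires knowing $\mathbf{1}$ is not a Perron vector of $M_2$ (and delivers a strict inequality the theorem statement does not need).

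The one place your sketch is not a proof is the phrase ``this monotonicity of the ratios propagates through the first recurrence.'' Increasingness of $p_i^{(1)}$ in $i$ does not, by itself, yield increasingness of $p_i^{(3)}/p_i^{(2)}$ in $i$; ratio monotonicity is not preserved by a positive-kernel recurrence in any automatic way. What the paper actually does is compute $m_{l+1}^{(3)}m_l^{(2)}-m_l^{(3)}m_{l+1}^{(2)}$ in closed form (via MATLAB), substitute $d=l+k$, and observe that every coefficient of the resulting polynomial in $r$ is manifestly positive for $k,l\ge1$, with the boundary case $l=d$ handled separately. So the closing step really is brute polynomial positivity, exactly the ``delicate'' verification you anticipate in your last sentence, not a structural propagation argument.
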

\begin{proof}
	It is known in \cite{molitierno2018tight} that $\rho(M)\leq \rho(M_2)$. It is straightforward from Theorem \ref{thm:sequence for M} that $\rho(M_2)< a_3(M_2,\mathbf{1})$. See \Cref{subappen:1} for the completion of the proof.
\end{proof}

\begin{remark}\label{Remark:sharper}
	Continuing \Cref{thm:upperForPerron}, if $d\geq 5$ and $r>\frac{2d^5+37d^4+5d^3-395d^2-376d-2}{16d^4-260d^2-360d-116}\approx\frac{d}{8}$, then the following upper bound for $\rho(M)$ appears in \cite{molitierno2018tight}:
	\begin{align*}
		f(d,r)=dr+\frac{4d^4+20d^3+25d^2+40d+1}{10d^2+45d+5}\geq \rho(M).
	\end{align*}
	We shall show that $a_3(M,\mathbf{1})$ is a shaper bound than $f(d,r)$ except for a few values of $d$ and $r$ with $r\geq \frac{d}{8}$. Using MATLAB\textsuperscript{\textregistered}, we can find that
	\begin{align*}
		f(d,r)-a_3(M,\mathbf{1})=\frac{N(d,r)}{D(d,r)},
	\end{align*}
	where
	\begin{align*}
		N(d,r)=&24d^2(2d + 1)(d + 1)(2d^2 - 3d - 29)r^2\\
		&+12d(2d + 1)(d + 1)(d^2 - d + 4)(2d^2 - 3d - 29)r\\
		&-(d + 3)(d + 1)(2d^6 + 67d^5 - 202d^4 - 131d^3 - 568d^2 + 100d + 192),\\		
		D(d,r)=&30(2d^2 + 9d + 1)(5d^4 + 20d^3r + 10d^3 + 24d^2r^2 + 24d^2r + 19d^2 + 52dr + 14d + 24).
	\end{align*}
	Clearly, $D(d,r)>0$ and $N(d,r)$ is a quadratic polynomial in $r$. One can verify that the coefficients of $r$ and $r^2$ in $N$ are positive, and the constant is negative for $d\geq 5$. This implies that if \( d\ge5 \) and $N(d,r_0)>0$ for some $r_0$, then $N(d,r)>N(d,r_0)$ for $r\geq r_0$. In terms of $d$, one can verify that $N(d,d/8)>0$ for $d\geq 17$. Hence, $a_3(M,\mathbf{1})$ is a sharper upper bound on \( \rho(M) \) for $d\geq 17$. Furthermore, one can check that given $5\leq d \leq 16$, there exists $2\le r_0\le 4$ such that $a_3(M,\mathbf{1})<f(d,r)$ for $r\geq r_0$.
\end{remark}


\begin{theorem}\label{thm:lowerForPerron}
	Let $\cT$ be a rooted tree on $n$ vertices, and $M$ be the bottleneck matrix of $\cT$. Suppose that $d$ is the eccentricity of the root. Let $r=n-d$. Suppose that $Q_1$ and $M_1$ are the neckbottle matrix and the bottleneck matrix of $B_1(d,r)$, respectively. (Note $\rho(M_1)=\rho(Q_1)$.) Then, two lower bounds $c_3(Q_1,\mathbf{1})$ and $c_3(M_1,\mathbf{1})$ on $\rho(M)$ are given by $c_3(Q_1,\mathbf{1})=\frac{U_1(d,r)}{U_2(d,r)}$ and $c_3(M_1,\mathbf{1})=\frac{V_1(d,r)}{V_2(d,r)}$ where
	\begin{align*}
		U_1(d,r)=&4r^3 + 2(d^2 + 3d + 4)r^2 +\frac{1}{12}(13d^4 + 26d^3 + 41d^2 + 28d + 48)r\\
		&+\frac{1}{2520}d(d+1)(2d+1)(68d^4 + 136d^3 + 133d^2 + 65d + 18),\\
		U_2(d,3)=&4r^2 + 2(d^2 + d + 2)r +\frac{1}{30}d(d+1)(2d+1)(2d^2 + 2d + 1),\\
		V_1(d,r)=&r^4 + (4d + 3)r^3 +\frac{1}{2}(2d+3)(d+2)(d+1)r^2\\
		&+\frac{1}{15}(d+1)(4d^4 + 16d^3 + 19d^2 + 21d + 15)r\\
		&+\frac{1}{2520}d(2d+1)(d+1)(68d^4 + 136d^3 + 133d^2 + 65d + 18),\\
		V_2(d,r)=&r^3 + (3d + 2)r^2 +\frac{1}{3}(d+1)(2d^2+4d+3)r\\
		&+\frac{1}{30}d(2d+1)(d+1)(2d^2+2d+1).
	\end{align*}
	Moreover, for each $d\geq 3$, the polynomial \( c_3(M_1,\mathbf{1})-c_3(Q_1,\mathbf{1}) \) in \( r \) has a root $r_0$ in the open interval \( (0.4d^2-1,0.42d^2+2) \); further, $c_3(M_1,\mathbf{1})<c_3(Q_1,\mathbf{1})$ for $r<r_0$ and $c_3(M_1,\mathbf{1})>c_3(Q_1,\mathbf{1})$ for $r>r_0$.
\end{theorem}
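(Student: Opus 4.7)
The proof splits into three parts: verifying the lower-bound claims, deriving the closed forms for $U_1, U_2, V_1, V_2$, and analyzing the sign of the difference. For the first part, the induction-on-$r$ argument preceding the statement gives $M_1 \le M$ entry-wise and hence $\rho(M_1) \le \rho(M)$; since $M_1 = N^T N$ and $Q_1 = N N^T$ share their non-zero spectrum, $\rho(Q_1) = \rho(M_1)$. Because $\mathbf{1}$ is not a Perron vector of either matrix when $d \ge 2$, \Cref{thm:sequence for M} applied to the positive matrix $M_1$ and \Cref{thm:sequence for Q} applied to $Q_1$ yield the strict inequalities $c_3(M_1,\mathbf{1}) < \rho(M_1) \le \rho(M)$ and $c_3(Q_1,\mathbf{1}) < \rho(Q_1) = \rho(M_1) \le \rho(M)$.

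For the explicit formulas, I would label the root of $B_1(d,r)$ as vertex $1$, the remaining path vertices as $2,\dots,d$, and the pendents as $d+1,\dots,d+r$. The path matrix then takes the block form
\[
N \;=\; \begin{bmatrix} T_d & \be_1 \mathbf{1}_r^T \\ \mathbf{O} & I_r \end{bmatrix},
\]
where $T_d$ is the $d\times d$ upper-triangular matrix of $1$'s on and above the diagonal. Direct block multiplication gives closed expressions for $Q_1$, $M_1$, and $N\mathbf{1}$. Exploiting the identity $\mathbf{1}^T M_1^k \mathbf{1} = (N\mathbf{1})^T Q_1^{k-1} (N\mathbf{1})$, the numerators and denominators of $c_3(Q_1,\mathbf{1})$ and $c_3(M_1,\mathbf{1})$ reduce to sums of the form $\sum_{i=1}^d i^p$ together with separate pendant contributions; the standard power-sum formulas then produce the stated $U_1,U_2,V_1,V_2$ after collection. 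This manipulation is routine but lengthy, and is naturally deferred to an appendix, following the pattern of \Cref{subappen:1}.

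For the comparison, write
\[
c_3(M_1,\mathbf{1}) - c_3(Q_1,\mathbf{1}) \;=\; \frac{P(d,r)}{V_2(d,r)\,U_2(d,r)}, \qquad P := V_1 U_2 - V_2 U_1.
\]
All coefficients of $V_2$ and $U_2$ are positive, so the denominator is strictly positive for $d,r \ge 1$, and the sign of the difference equals the sign of $P$. A priori $P$ has degree $6$ in $r$, but a short check shows that the $r^6$ and $r^5$ coefficients of $P$ cancel, leaving a polynomial of degree at most $4$ in $r$ whose coefficients are polynomials in $d$. Substituting $r = 0.4d^2 - 1$ and $r = 0.42d^2 + 2$ into $P$ yields two polynomials in $d$ of degree roughly $8$; verifying that for every $d \ge 3$ these take strictly opposite signs produces the claimed root $r_0$ via the intermediate value theorem. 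The asserted sign pattern (negative for $r < r_0$, positive for $r > r_0$) then follows once we know $P(d,\cdot)$ is strictly monotone in $r$ on $[0,\infty)$ for each such $d$, a polynomial inequality handled by the same substitution-and-sign technique applied to $\partial P/\partial r$.

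The principal obstacle is this final step: both substitutions yield dense polynomials in $d$ of rather high degree, so establishing strict sign uniformly for all integer $d \ge 3$ requires either a careful grouping of positive and negative contributions, or a small-$d$ base case combined with asymptotic dominance of the leading $d$-coefficient. The cancellations that reduce the degree of $P$ in $r$ are the chief source of fragility, since they must be tracked exactly for the endpoint estimates to close; the Stage~2 bookkeeping, while tedious, is mechanical and is well-served by computer-algebra verification of the block-matrix arithmetic.
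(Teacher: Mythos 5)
Your structure is right, and Stage~1 (the lower-bound chain $c_3(M_1,\mathbf 1)<\rho(M_1)\le\rho(M)$, $c_3(Q_1,\mathbf 1)<\rho(Q_1)=\rho(M_1)$ via \Cref{thm:sequence for M} and \Cref{thm:sequence for Q}) is exactly what the paper does, with the welcome addition of noting explicitly that $\mathbf 1$ is not a Perron vector. Your Stage~2 route is genuinely different from the paper's: you propose computing $\mathbf 1^T M_1^k\mathbf 1=(N\mathbf 1)^T Q_1^{k-1}(N\mathbf 1)$ from a single block form of $N$ and collapsing everything to power sums $\sum_{i\le d} i^p$, whereas the paper sets up coordinate recurrences $m_l^{(k)}$ and $q_l^{(k)}$ for $M^k\mathbf 1$ and $Q^k\mathbf 1$ separately (with two different vertex labelings, Figures \ref{Figure:Broom2} and \ref{Figure:Broom3}) and pushes them through MATLAB. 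Both reduce to computer-algebra bookkeeping; your identity has the advantage of unifying the two computations, though the paper's recursion is closer in spirit to the $B_2(d,r)$ argument in \Cref{subappen:1}, so it streamlines the appendix.

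The genuine flaw is in your final step. You want to conclude the sign pattern by showing $P(d,\cdot)$ is strictly monotone on $[0,\infty)$ via $\partial P/\partial r>0$, but that monotonicity claim is false. First, $P=V_1U_2-V_2U_1$ is not merely degree $\le 4$ in $r$: its constant term also vanishes (the $r$-constant coefficients of $V_1U_2$ and $V_2U_1$ are identical), and in fact $P$ is divisible by $dr$; the paper works with $F(d,r)=P/(dr)$, a cubic in $r$. For $d\ge3$ this cubic has leading coefficient $\tfrac{1}{60}(d-1)(d-2)(8d^2-21d+11)>0$ while its $r^2$, $r^1$, and $r^0$ coefficients are all negative. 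In particular $F'(d,0)$ equals the (negative) linear coefficient, so $F$, and hence $P=drF$, is strictly \emph{decreasing} near $r=0^+$ and cannot be monotone on $[0,\infty)$; the substitution-and-sign technique applied to $\partial P/\partial r$ would simply fail. What actually carries the conclusion is the sign pattern $(+,-,-,-)$ of the coefficients: a single sign change gives at most one positive root (Descartes), the two endpoint evaluations plus IVT give at least one, so there is exactly one root $r_0$, and since $F(d,0)<0$ and $F\to+\infty$ the sign is negative to the left of $r_0$ and positive to the right. Replace the monotonicity claim with this coefficient-sign argument and your proof closes.
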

\begin{proof}
	By Theorems \ref{thm:sequence for M} and \ref{thm:sequence for Q}, we obtain $c_3(Q_1,\mathbf{1})< \rho(M)$ and $c_3(M_1,\mathbf{1})<\rho(M)$. See \Cref{subappen:2} for the completion of the proof.
\end{proof}

\begin{remark}\label{remark:Comparison for lowerbounds}
	As another lower bound on the Perron value of a rooted tree with root $x$, one may consider $b_k(M,\mathbf{1})=\min\limits_{i}{\frac{(M^k\mathbf{1})_i}{(M^{k-1}\mathbf{1})_i}}$ for $k\geq 1$, where $M$ is the bottleneck matrix of the rooted tree. Since the row and column of $M$ corresponding to $x$ are the all ones vector, we can see from Theorem \ref{thm:sequence for M} that
	\begin{align*}
		b_k(M,\mathbf{1})=\min\limits_{i}{\frac{(M^k\mathbf{1})_i}{(M^{k-1}\mathbf{1})_i}}\leq \frac{(M^k\mathbf{1})_x}{(M^{k-1}\mathbf{1})_x}=\frac{\be_x^T M^k\mathbf{1}}{\be_x^T M^{k-1}\mathbf{1}}=\frac{\mathbf{1}^T M^{k-1}\mathbf{1}}{\mathbf{1}^T M^{k-2}\mathbf{1}}<\frac{\mathbf{1}^T M^k\mathbf{1}}{\mathbf{1}^T M^{k-1}\mathbf{1}}=c_k(M,\mathbf{1}).
	\end{align*}
	Hence, $c_k(M,\mathbf{1})$ is a sharper lower bound on $\rho(M)$ than $b_k(M,\mathbf{1})$.
\end{remark}

\begin{example}\label{Observation:comparison}
	Let $M$ be the bottleneck matrix of $B_1(16,r)$. As seen in Figure \ref{fig:comparison}, the sharpness of $c_3(Q,\mathbf{1})$ and $c_3(M,\mathbf{1})$, as lower bounds, is inverted at a particular value of $r$. By Theorem \ref{thm:lowerForPerron}, $c_3(M,\mathbf{1})-c_3(Q,\mathbf{1})$ has exactly one positive root $r_0$ in the open interval $(101.4,109.52)$. Indeed, using MATLAB\textsuperscript\textregistered, we have $r_0\approx 108.1708$.
	\begin{figure}[h!]
		\begin{center}
			\includegraphics[width=0.8\textwidth]{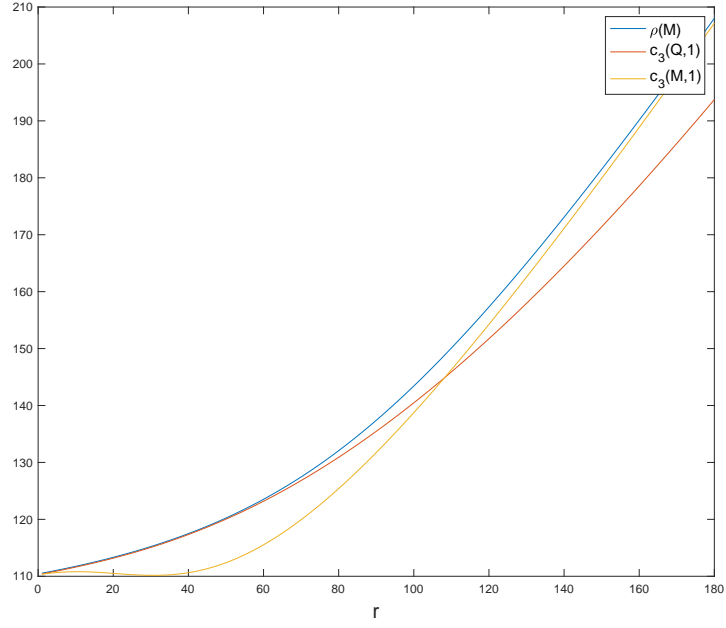}
		\end{center}
		\caption{Comparison of lower bounds $c_3(M,\mathbf{1})$ and $c_3(Q,\mathbf{1})$ for $\rho(M)$.}\label{fig:comparison}
	\end{figure}
\end{example}

\subsection{Log-convexity and log-concavity of recurrence relations}\label{Subsec:3.2}

As seen in \Cref{thm:logconcave seq} and \Cref{Thm:log-concave from c_k}, we can construct
log-convex and log-concave sequences. In this subsection, we examine various examples. 

\begin{example}\label{ex:path bottleneck}
	Let $n\geq 2$. Let $P_n$ be the rooted path on vertices $1,\dots,n$, where $i$ is adjacent to $i+1$ for $i=1,\dots,n-1$ and vertex $1$ is the root. We use \( M_{P_n} \) to denote the bottleneck matrix of the rooted path. Then,
	\[
		M_{P_n}=\begin{bmatrix}
		 1 &  &  & \cdots &  &  1 \\
		  & 2 &  & \cdots &  &  2 \\
		  &  & 3 & \cdots &  &  3 \\
		  \vdots & \vdots & \vdots & \ddots &  & \vdots  \\
		  &  &  &  & n-1 & n-1  \\
		1  & 2 & 3 &  & n-1 & n \\
	 \end{bmatrix}.
	\]
For the path matrix $N$, we have
\begin{align*}
	M_{P_n}=N^TN=\begin{bmatrix}
		1 &  &  \mathbf{O}   \\
		\vdots &  \ddots & \\
		1 &\cdots  &  1  \\
   \end{bmatrix}\begin{bmatrix}
		 1 & \cdots &  1 \\
		  & \ddots &  \vdots \\
		  \mathbf{O} &  &  1 \\
	\end{bmatrix}.
\end{align*}
Let \( s_k^{(n,i,j)}=\be_i^TM_{P_n}^k\be_j \) for some \( 1\le i,j\le n \). Regarding $N$ as the adjacency matrix of a directed graph, interpreting $\be_i^T(N^TN)^k\be_j$ as the number of walks of length $2k$ from $i$ to $j$ on the directed graph with reversing all the directions after each step of walk, one can find that \( s_k^{(n,i,j)} \) is the number of sequences \( (a_0,\dots,a_{2k-2}) \), where \( 1\le a_t\le n \) for \( t=0,\dots,2k-2 \), satisfying \( i\ge a_0\le a_1\ge\dots\le a_{2k-3}\ge a_{2k-2}\le j \). Note by symmetry that \( s_k^{(n,i,j)}=s_k^{(n,j,i)} \) for any \( i,j,k,n \). By \Cref{Thm:log-concave from c_k}, the combinatorial sequence \( \left(s_k^{(n,j,j)}\right)_{k\ge0} \) is log-convex for any \( 1\le j\le n \). (In particular, \( s_k^{(2,1,1)}=F_{2k-1} \), where \( F_{k} \) is the \( k^\text{th} \) Fibonacci number with \( F_0=0,F_1=1 \).) 
\end{example} 

Given an $n\times n$ irreducible nonnegative matrix $A$ with $\bx\in\RR_{++}^n$, the sequences $(a_k(A,\bx))_{k\geq 1}$ and $(b_k(A,\bx))_{k\geq 1}$ may produce log-concave and log-convex sequences, respectively.


\begin{example}
	Let \( A=\begin{bmatrix}
		a &  b \\
		c &  d \\
	\end{bmatrix}
	 \) be irreducible and nonnegative, and \( \bx=\begin{bmatrix}
		x_1   \\
		x_2   \\
	 \end{bmatrix}
	  \) be positive. Let $k\geq 1$. Using induction on $k$, we can find that 
	  \begin{align*}
		  D(k)=\left(A^k\bx\right)_{2}\left(A^{k-1}\bx\right)_{1}-\left(A^k\bx\right)_{1}\left(A^{k-1}\bx\right)_{2}=(x_1x_2(d-a)-bx_2^2+cx_1^2)(ad-bc)^{k-1}.
	  \end{align*}
	  Then, $\det (A)\geq 0$ if and only if $D(k)$ is either nonpositive or nonnegative for all $k\geq 1$. Let $g_k=(A^k\bx)_1$ and $h_k=(A^k\bx)_2$. It follows from \Cref{Prop:logconcavity index} that $\det (A)\geq 0$ if and only if one of two sequences $(g_k)_{k\geq 1}$ and $(h_k)_{k\geq 1}$ is log-concave and the other is log-convex, upon the sign of $x_1x_2(d-a)+bx_2^2-cx_1^2$. This shows the existence of log-concavity and convexity indices for \( 2\times2 \) irreducible nonnegative matrix \( A \) even if \( A \) is not symmetric. (We note that \Cref{prop:log index} only provides the existence of log-concavity and convexity indices when $A$ is positive semidefinite.) As a concrete example, consider $A=\begin{bmatrix}
		2 &  1 \\
		2 &  2 
	\end{bmatrix}$ and $\bx=\begin{bmatrix}
		1 \\ 1
	\end{bmatrix}$. Then, $\det(A)>0$ and \( D(k)>0 \) for all \( k \). Moreover, $(g_k)_{k\geq 1}$ is log-convex and $(h_k)_{k\geq 1}$ is log-concave.
\end{example}

\begin{remark}
	Given a \( k\times k \) matrix \( A \),
	 let \( p(x) \) be the characteristic polynomial of \( A \). Since \( p(A)=\mathbf{O} \), multiplying \( \bx \) on the right of both sides, we obtain a recurrence relation for \( r_n^{(i)}=(A^n\bx)_{i} \). If \( A \) is a \( 2\times 2 \) matrix where \( {\rm tr}(A)>0 \) and \( \det(A)>0 \), then \( r_n^{(i)} \) satisfies a recurrence relation \( r_{n+1}^{(i)}=c_1r_n^{(i)}+c_2r_{n-1}^{(i)} \) for some positive \( c_1 \) and \( c_2 \). Then we can determine whether \( r_n^{(i)} \) is log-convex or log-concave by \cite{liu2007log}. Indeed, if $\left( r_0^{(i)},r_1^{(i)},r_2^{(i)} \right)$ is log-convex (resp. log-concave), then $\left(r_n^{(i)} \right)_{n\ge0}$ is log-convex (resp. log-concave) for each \( i \).
\end{remark}

\begin{example}
	Let \( A=\begin{bmatrix}
		2 & 1 &  1 \\
		1 & 2 &  0 \\
		1 & 0 & 2 \\
	\end{bmatrix} \) and \( \bx=\begin{bmatrix}
		1\\1\\1
	\end{bmatrix} \). Then, \( A \) is irreducible, nonnegative, positive semidefinite matrix. Its orthonormal eigenvectors are given by
	\[ 
	 \bu_1=\begin{bmatrix}
		1/\sqrt{2}\\1/2\\1/2
	 \end{bmatrix},
	 \bu_2=\begin{bmatrix}
		0\\-1/\sqrt{2}\\1/\sqrt{2}
	 \end{bmatrix},
	 \bu_3=\begin{bmatrix}
		-1/\sqrt{2}\\1/2\\1/2
	 \end{bmatrix}
	\]
	and hence 
	\[ 
		\by_1=\bu_1\bu_1^T\bx=\begin{bmatrix}
			\frac{1+\sqrt{2}}{2}\\
			\frac{2+\sqrt{2}}{4}\\
			\frac{2+\sqrt{2}}{4}
		\end{bmatrix},
		\by_2=\bu_2\bu_2^T\bx=\begin{bmatrix}
			0\\0\\0
		\end{bmatrix},
		\by_3=\bu_3\bu_3^T\bx\begin{bmatrix}
			\frac{1-\sqrt{2}}{2}\\
			\frac{2-\sqrt{2}}{4}\\
			\frac{2-\sqrt{2}}{4}
		\end{bmatrix}.
	\]
	Following the notation in the proof of Proposition~\ref{prop:log index}, we have \( j_0=3 \) since \( \by_2=\mathbf{0}\). One can directly have that \( p_0=1 \) and \( X_1=\{p_0\} \). Furthermore, 
	\[ 
		F(p,q,k)=(\mu_1\mu_3)^{k-1}(\mu_1-\mu_3)\det\begin{bmatrix}
		y_{p,1} & y_{p,3}\\
		y_{q,1} & y_{q,3}
	\end{bmatrix}, 
	\]
	 where \( \mu_1=2+\sqrt{2} \) and \( \mu_3=2-\sqrt{2} \). Since \( F(1,q,k)>0 \) for \( q=2,3 \) and any \( k \), the log-concavity index is \( 1 \). The produced log concave sequence is 
	\[ 
	1, 4, 14, 48, 164, 560, 1912,\cdots. 
	\]
	For more details, See A007070 in OEIS~\cite{OEIS}. In the same way, one can find that \( F(2,1,k)<0, F(2,3,k)=0 \) for any \( k \) and hence the log-convexity index is \( 2 \) and \( 3 \). (They produce the same sequence.) The produced log convex sequence is 
	\[ 
	1, 3, 10, 34, 116, 396, 1352, \cdots,
	\]
	see A007052 in OEIS.
\end{example}
\begin{example}
	In this example, we consider a family of particular symmetric tridiagonal matrices. Let $A=aI+bP$ for some $a,b>0$ where $P$ is the adjacency matrix of the path graph on $n$ vertices.  Let a Motzkin path be a lattice path using the step set \( \{up=(1,1),level=(1,0),down=(1,-1)\} \) that never goes below the \( x \)-axis. Then, the $i^\text{th}$ entry of $A^k\mathbf{1}$ is the sum of weights of weighted Motzkin paths of length \( k \) starting at \( (0,i) \)
	, where up and down steps are of weight \( a \) and the level step has the weight \( b \). 
	
	It can be found in \cite{brouwer2011spectra} that eigenvalue $\lambda_l$ of $P$ for $l=1,\dots,n$ is given by $\lambda_l=2\cos\left(\frac{l\pi}{n+1}\right)$, and the corresponding eigenvector $\bu_l$ is given by $(\bu_l)_j=\sin\left(\frac{lj\pi}{n+1}\right)$. So, the eigenvalues of $A$ are given by $a+b\lambda_l$ for $l=1,\dots,n$. Hence, if $a\geq 2b$, then $A$ is irreducible and positive semidefinite. Therefore, by \Cref{thm:logconcave seq}, $(a_k(A,\mathbf{1}))_{k\geq 1}$ and $(b_k(A,\mathbf{1}))_{k\geq 1}$ produce log-concave and log-convex sequences.

	We now determine log-concavity and log-convexity indices, considering the case \ref{case b} in \Cref{Remark:finding index}. All eigenvalues of $A$ are distinct and $\lambda_1>\dots>\lambda_n$. It can be checked that $\bu_{l}^T\mathbf{1}\geq 0$ with the equality if $l$ is even. Let $\by_l = (\bu_{1}^T\mathbf{1})\bu_l$ for $l=1,\dots,n$. We can find from $\sin(3\theta) = -4\sin^3(\theta)+3\sin(\theta)$ that for $j=2,\dots,n-1$,
	\begin{align*}
		\det\begin{bmatrix}
			(\by_1)_1 & (\by_3)_1 \\
			(\by_1)_j & (\by_3)_j \\
		\end{bmatrix} = (\bu_{1}^T\mathbf{1})(\bu_{3}^T\mathbf{1})\det\begin{bmatrix}
			\sin\left(\frac{\pi}{n+1}\right) & \sin\left(\frac{3\pi}{n+1}\right) \\
			\sin\left(\frac{j\pi}{n+1}\right) & \sin\left(\frac{3j\pi}{n+1}\right)
		\end{bmatrix}<0.
	\end{align*}
	Furthermore, $\det\begin{bmatrix}
		(\by_1)_1 & (\by_j)_1 \\
		(\by_1)_n & (\by_j)_n \\
	\end{bmatrix}=0$ for $j=2,\dots,n$. Therefore, $1$ and $n$ are the log-convexity indices of $A$ associated with $\mathbf{1}$. Similarly, one can verify that $\lfloor\frac{n}{2}\rfloor$ and $\lceil\frac{n}{2}\rceil$ are the log-concavity indices of $A$ associated with $\mathbf{1}$.
\end{example}

\begin{remark}
	To find the log-convexity and concavity indices, we have to calculate eigenvalues and eigenvectors. It seems necessary to develop a tool to obtain log convexity and concavity indices with simple hand calculations.
   \end{remark}

\section{Appendices}\label{appendix}

\appendix

\section{Proofs for some results in Subsection \ref{Subsec:3.1}}
\label{appendix:1}

We remark that tedious calculations based on recurrence relations in this appendix will not be displayed in detail, and they are performed by MATLAB\textsuperscript{\textregistered}. We denote by \( M_{P_d} \) the bottleneck matrix of the rooted path in Example~\ref{ex:path bottleneck}. 


\subsection{Proof pertaining to the upper bound in Theorem \ref{thm:upperForPerron}}\label{subappen:1}

Here, we complete the proof of the remaining argument in Theorem \ref{thm:upperForPerron}.

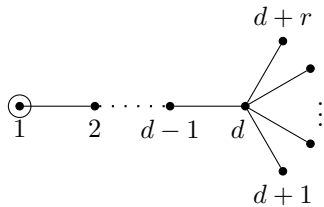
\begin{figure}[h!]
	\begin{center}
		\begin{tikzpicture}
		\tikzset{enclosed/.style={draw, circle, inner sep=0pt, minimum size=.10cm, fill=black}}
		\node[enclosed, label={below, yshift=0cm: $1$}] (v_1) at (0,1) {};
		\node[enclosed, label={below, yshift=0cm: $2$}] (v_2) at (1,1) {};
		\node[enclosed, label={below, yshift=0cm: $d-1$}] (v_3) at (2,1) {};
		\node[enclosed, label={below, xshift=-.1cm: $d$}] (v_4) at (3,1) {};
		\node[enclosed,  label={below, yshift=0cm: $d+1$}] (v_6) at (3.5,0.134) {};
		\node[enclosed] (v_7) at (3.866,0.5) {};
		\node[enclosed,label={above, yshift=0cm: $d+r$}] (v_8) at (3.5,1.8660) {};
		\node[enclosed] (v_9) at (3.8660,1.5) {};
		\node[label={center, yshift=0cm: $\vdots$}] (v_10) at (4,1) {};
		
		\draw (v_1) -- (v_2);
		\draw[thick, loosely dotted] (v_2) -- (v_3);
		\draw (v_3) -- (v_4);

		\draw (v_4) -- (v_6);
		\draw (v_4) -- (v_7);
		\draw (v_4) -- (v_8);
		\draw (v_4) -- (v_9);

		\path[draw=black] (v_1) circle[radius=0.15];
		\end{tikzpicture}
	\end{center}
	\caption{A visualization of $B_2(d,r)$ with root \( 1 \).}\label{Figure:Broom1}
\end{figure}

Let $d$ and $r$ be positive integers, and let $n=d+r$. Let $M$ be the bottleneck matrix of $B_2(d,r)$ on $n$ vertices in Figure \ref{Figure:Broom1}. Then $M$ is given by
\begin{align*}
	M=\begin{bmatrix}
		M_{P_d} & M_{21}^T\\
		M_{21} & M_{22}
	\end{bmatrix},
\end{align*}
where $M_{21}=\mathbf{1}_r\begin{bmatrix}
	1 & 2 & \cdots & d
\end{bmatrix}$, and $M_{22}=dJ_r+I_r$. Let $m_i^{(0)}=1$ for $i=1,\dots,n$. Suppose that for $k\geq 1$, 
$$M\begin{bmatrix}
	m_1^{(k-1)}\\ \vdots \\ m_n^{(k-1)}
\end{bmatrix}=\begin{bmatrix}
	m_1^{(k)}\\ \vdots \\ m_n^{(k)}
\end{bmatrix}.$$ 
From the structure of $M$, it can be readily checked that $m_{d+1}^{(k)}=\cdots=m_{n}^{(k)}$ for $k\geq 0$. Furthermore, we can see that
\begin{align}\label{recurrence1}
	m_{l}^{(k)}=\begin{cases*}
		lrm_{d+1}^{(k-1)}+\sum\limits_{i=1}^l\sum\limits_{j=i}^d m_{j}^{(k-1)}& \text{if $1\leq l\leq d$,}\\
		m_{d}^{(k)}+m_{d+1}^{(k-1)}& \text{if $d+1\leq l\leq n$.} 
	\end{cases*}
\end{align}


We claim that \begin{align*}
	\frac{\left(M^3\mathbf{1}\right)_{n}}{\left(M^{2}\mathbf{1}\right)_{n}}=\max_{i}{\frac{\left(M^3\mathbf{1}\right)_{i}}{\left(M^{2}\mathbf{1}\right)_{i}}}.
\end{align*}
To verify \( \left(M^3\mathbf{1}\right)_{l+1}\left(M^{2}\mathbf{1}\right)_{l}-\left(M^3\mathbf{1}\right)_{l}\left(M^{2}\mathbf{1}\right)_{l+1}>0 \), we first consider the case \( 1\le l\le d-1 \). Set \( d=l+k \) for $k\ge1$. With the aid of MATLAB\textsuperscript\textregistered, we can find from the recurrence relations \eqref{recurrence1} that 
\begin{align*}
	&\left(M^3\mathbf{1}\right)_{l+1}\left(M^{2}\mathbf{1}\right)_{l}-\left(M^3\mathbf{1}\right)_{l}\left(M^{2}\mathbf{1}\right)_{l+1}\\
	=&m_{l+1}^{(3)}m_{l}^{(2)}-m_{l}^{(3)}m_{l+1}^{(2)}\\
	=&\frac{1}{360}l(l+1)(l+k)(7l^3+(35k-7)l^2+(40k^2-5k+2)l+20k^2-2)r^3\\
	&+\frac{1}{720}l(l+1)(6l^5+(56k+8)l^4+(168k^2+70k+76)l^3+(200k^3+177k^2+305k-26)l^2\\
	&+(80k^4+160k^3+313k^2+55k-40)l+40k^4+60k^3+122k^2-6k-24)r^2\\
	&+\frac{1}{2880}l(l+1)(3l^6+(48k+9)l^5+(224k^2+120k+61)l^4+(448k^3+448k^2+440k+107)l^3\\
	&+(400k^4+672k^3+972k^2+540k+560)l^2+(128k^{5}+400k^{4}+768k^{3}+748k^{2}+1084k\\
	&+508)l+64k^{5}+160k^{4}+272k^{3}+248k^{2}+456k+192)r\\
	&+\frac{1}{8640}kl\left( 2k+l\right)
	\left( l+1\right) \left( 2k+l+1\right)(9l^{4}+\left( 36k+18\right) l^{3}+\left( 44k^{2}+54k+25\right)l^{2}\\
	&+\left( 16k^{3}+44k^{2}+50k+16\right) l+8k^{3}+20k^{2}+16k+4).
\end{align*}
We can view $m_{l+1}^{(3)}m_{l}^{(2)}-m_{l}^{(3)}m_{l+1}^{(2)}$ as a polynomial in $r$. Let $C(i)$ be the coefficient of $r^i$ for $0\leq i\leq 3$. One can easily check that for \( k\ge1\) and \( l\ge1 \), $C(i)>0$ for each $i=0,\dots,3$. Thus, for $1\leq l\leq d-1$, we have  
$$
\left(M^3\mathbf{1}\right)_{l+1}\left(M^{2}\mathbf{1}\right)_{l}-\left(M^3\mathbf{1}\right)_{l}\left(M^{2}\mathbf{1}\right)_{l+1}>0.
$$
Moreover, if \( l=d \), then
\begin{align*}
	&\left(M^3\mathbf{1}\right)_{d+1}\left(M^{2}\mathbf{1}\right)_{d}-\left(M^3\mathbf{1}\right)_{d}\left(M^{2}\mathbf{1}\right)_{d+1}\\
	=&\frac{1}{360}d^2(d-1)(d+1)(7d^2+2)r^2\\
	&+\frac{1}{360}d(d-1)(d+1)(3d^4 + 7d^3 + 45d^2 + 32d + 12)r\\
	&+\frac{1}{2880}d(d+1)(3d^6 + 9d^5 + 61d^4 + 107d^3 + 560d^2 + 508d + 192)>0.
\end{align*}
Recall $m_{d+1}^{(k)}=\cdots=m_{n}^{(k)}=\left(M^k\mathbf{1}\right)_{n}$ for $k\geq 0$. Therefore,
\begin{align*}
	\frac{\left(M^3\mathbf{1}\right)_{n}}{\left(M^{2}\mathbf{1}\right)_{n}}=\max_{i}{\frac{\left(M^3\mathbf{1}\right)_{i}}{\left(M^{2}\mathbf{1}\right)_{i}}},
\end{align*}
where
\begin{align*}
	(M^3\mathbf{1})_n
	=&d^3r^3 +\frac{1}{6}d^2(7d^2 + 9d + 20)r^2+\frac{1}{120}d(61d^4 + 140d^3 + 315d^2 + 280d + 404)r\\
	&+\frac{1}{720}(61d^6 + 183d^5 + 385d^4 + 465d^3 + 634d^2 + 432d + 720),
\end{align*}
and 
\begin{align*}
	(M^2\mathbf{1})_n=&d^2r^2 +\frac{1}{6}d(5d^2 + 6d + 13)r+\frac{1}{24}(5d^4 + 10d^3 + 19d^2 + 14d + 24).
\end{align*}

\subsection{Proofs pertaining to the lower bounds in Theorem \ref{thm:lowerForPerron}}\label{subappen:2}
We now complete the remaining argument in Theorem \ref{thm:lowerForPerron}.

\begin{figure}[h!]
	\begin{center}
		\begin{tikzpicture}
		\tikzset{enclosed/.style={draw, circle, inner sep=0pt, minimum size=.10cm, fill=black}}
		\node[enclosed, label={below, yshift=0cm: $d$}] (v_1) at (0,1) {};
		\node[enclosed, label={below, yshift=0cm: $d-1$}] (v_2) at (1,1) {};
		\node[enclosed, label={below, yshift=0cm: $2$}] (v_3) at (2,1) {};
		\node[enclosed, label={below, xshift=-.1cm: $1$}] (v_4) at (3,1) {};
		\node[enclosed,  label={below, yshift=0cm: $d+1$}] (v_6) at (3.5,0.134) {};
		\node[enclosed] (v_7) at (3.866,0.5) {};
		\node[enclosed,label={above, yshift=0cm: $d+r$}] (v_8) at (3.5,1.8660) {};
		\node[enclosed] (v_9) at (3.8660,1.5) {};
		\node[label={center, yshift=0cm: $\vdots$}] (v_10) at (4,1) {};
		
		\draw (v_1) -- (v_2);
		\draw[thick, loosely dotted] (v_2) -- (v_3);
		\draw (v_3) -- (v_4);

		\draw (v_4) -- (v_6);
		\draw (v_4) -- (v_7);
		\draw (v_4) -- (v_8);
		\draw (v_4) -- (v_9);

		\path[draw=black] (v_4) circle[radius=0.15];
		\end{tikzpicture}
	\end{center}
	\caption{A visualization of $B_1(d,r)$ with root \( 1 \).}\label{Figure:Broom2}
\end{figure}
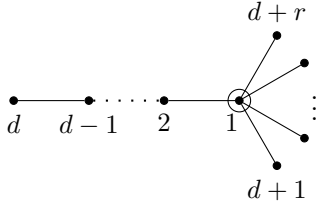

Let $d$ and $r$ be positive integers, and let $n=d+r$. We consider the bottleneck matrix $M$ of the broom $B_1(d,r)$ as in Figure \ref{Figure:Broom2}. Then $M$ is given by
\begin{align*}
	M=\begin{bmatrix}
		M_{P_d} & J_{d,r}\\  
		J_{r,d} & I_r+J_r
	\end{bmatrix}.
\end{align*}
 Let $m_i^{(0)}=1$ for $i=1,\dots,n$. Suppose that for $k\geq 1$, 
$$M\begin{bmatrix}
	m_1^{(k-1)}\\ \vdots \\ m_n^{(k-1)}
\end{bmatrix}=\begin{bmatrix}
	m_1^{(k)}\\ \vdots \\ m_n^{(k)}
\end{bmatrix}.$$
 Then we have
\begin{align*}
	m_{l}^{(k)}=\begin{cases*}
		rm_{d+1}^{(k-1)}+\sum\limits_{i=1}^l\sum\limits_{j=i}^d m_{j}^{(k-1)}& \text{if $1\leq l\leq d$,}\\
		m_{1}^{(k)}+m_{d+1}^{(k-1)}& \text{if $d+1\leq l\leq n$.} 
	\end{cases*}
\end{align*}
Using MATLAB\textsuperscript\textregistered, we obtain
\begin{align*}
	\mathbf{1}^TM^3\mathbf{1}=&r^4 + (4d + 3)r^3 +\frac{1}{2}(2d+3)(d+2)(d+1)r^2\\
	&+\frac{1}{15}(d+1)(4d^4 + 16d^3 + 19d^2 + 21d + 15)r\\
	&+\frac{1}{2520}d(2d+1)(d+1)(68d^4 + 136d^3 + 133d^2 + 65d + 18),\\
	\mathbf{1}^TM^2\mathbf{1}=&r^3 + (3d + 2)r^2 +\frac{1}{3}(d+1)(2d^2+4d+3)r+\frac{1}{30}d(2d+1)(d+1)(2d^2+2d+1).
\end{align*}

\begin{figure}[h!]
	\begin{center}
		\begin{tikzpicture}
		\tikzset{enclosed/.style={draw, circle, inner sep=0pt, minimum size=.10cm, fill=black}}
		\node[enclosed, label={below, yshift=0cm: $1$}] (v_1) at (0,1) {};
		\node[enclosed, label={below, yshift=0cm: $2$}] (v_2) at (1,1) {};
		\node[enclosed, label={below, yshift=0cm: $d-1$}] (v_3) at (2,1) {};
		\node[enclosed, label={below, xshift=-.1cm: $d$}] (v_4) at (3,1) {};
		\node[enclosed,  label={below, yshift=0cm: $d+1$}] (v_6) at (3.5,0.134) {};
		\node[enclosed] (v_7) at (3.866,0.5) {};
		\node[enclosed,label={above, yshift=0cm: $d+r$}] (v_8) at (3.5,1.8660) {};
		\node[enclosed] (v_9) at (3.8660,1.5) {};
		\node[label={center, yshift=0cm: $\vdots$}] (v_10) at (4,1) {};
		
		\draw (v_1) -- (v_2);
		\draw[thick, loosely dotted] (v_2) -- (v_3);
		\draw (v_3) -- (v_4);

		\draw (v_4) -- (v_6);
		\draw (v_4) -- (v_7);
		\draw (v_4) -- (v_8);
		\draw (v_4) -- (v_9);

		\path[draw=black] (v_4) circle[radius=0.15];
		\end{tikzpicture}
	\end{center}
	\caption{A visualization of $B_1(d,r)$ with root \( d \).}\label{Figure:Broom3}
\end{figure}
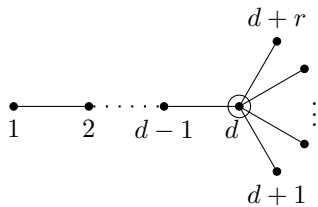

We now consider the neckbottle matrix $Q$ of $B_1(d,r)$ with the labeling of vertices as in Figure \ref{Figure:Broom3}. The path matrix $N$ of $B_1(d,r)$ is given by
\begin{align*}
	N=\begin{bmatrix}
		N_{11} & \be_d\mathbf{1}_r^T\\
		\mathbf{O} & I_r
	\end{bmatrix},
\end{align*}
where $\be_d$ is the column vector of size $d$ with a single $1$ in the $d^\text{th}$ position and zeros elsewhere, and 
$$N_{11}=\begin{bmatrix}
	 1 &  &  \mathbf{O}   \\
	 \vdots &  \ddots & \\
	 1 &\cdots  &  1  \\
\end{bmatrix}.$$
Then $Q$ is given by
\begin{align*}
	Q=\begin{bmatrix}
		M_{P_d}+r\be_d\be_d^T & \be_d\mathbf{1}_r^T\\
		\mathbf{1}_r\be_d^T & I_r
	\end{bmatrix}.
\end{align*}
Let $q_i^{(0)}=1$ for $i=1,\dots,n$. Suppose that for $k\geq 1$, $$Q\begin{bmatrix}
	q_1^{(k-1)}\\ \vdots \\ q_n^{(k-1)}
\end{bmatrix}=\begin{bmatrix}
	q_1^{(k)}\\ \vdots \\ q_n^{(k)}
\end{bmatrix}.$$
From the structure of $Q$, the following recurrence relations can be found:
\begin{align*}
	q_{l}^{(k)}=\begin{cases*}
		\sum\limits_{i=1}^l\sum\limits_{j=i}^d q_{j}^{(k-1)}& \text{if $1\leq l\leq d-1$,}\\
		r\left(q_{d}^{(k-1)}+q_{d+1}^{(k-1)}\right)+\sum\limits_{i=1}^d\sum\limits_{j=i}^d q_{j}^{(k-1)}& \text{if $l=d$,}\\
		q_{d}^{(k-1)}+q_{d+1}^{(k-1)}& \text{if $d+1\leq l\leq n$.}
	\end{cases*}
\end{align*}
Using MATLAB\textsuperscript\textregistered, we obtain
\begin{align*}
	\mathbf{1}^TQ^3\mathbf{1}=&4r^3 + 2(d^2 + 3d + 4)r^2 +\frac{1}{12}(13d^4 + 26d^3 + 41d^2 + 28d + 48)r\\
	&+\frac{1}{2520}d(d+1)(2d+1)(68d^4 + 136d^3 + 133d^2 + 65d + 18),\\
	\mathbf{1}^TQ^2\mathbf{1}=&4r^2 + 2(d^2 + d + 2)r +\frac{1}{30}d(d+1)(2d+1)(2d^2 + 2d + 1).
\end{align*}

We shall show that given $d\geq 3$, $c_3(M,\mathbf{1})-c_3(Q,\mathbf{1})$ has exactly one root in $(0,\infty)$. Using MATLAB\textsuperscript\textregistered, we have
\begin{align*}
	F(d,r)=&\frac{1}{dr}(\mathbf{1}^TM^2\mathbf{1})(\mathbf{1}^TQ^2\mathbf{1})\left(c_3(M,\mathbf{1})-c_3(Q,\mathbf{1})\right) \\
	=&\frac{1}{dr}\left((\mathbf{1}^TM^3\mathbf{1})(\mathbf{1}^TQ^2\mathbf{1})-(\mathbf{1}^TQ^3\mathbf{1})(\mathbf{1}^TM^2\mathbf{1})\right)\\
	=&\frac{1}{60}(d-1)(d-2)(8d^2 - 21d + 11)r^3-\frac{1}{2520}(136d^6 - 868d^5 + 1540d^4 - 1015d^3\\
	& + 2674d^2 - 4417d - 570)r^2-\frac{1}{840}(d-1)(d+1)(24d^5 - 16d^4 - 242d^3 + 355d^2 \\
	&- 116d - 184)r-\frac{1}{37800}(d+1)(2d+1)(8d^7 + 58d^6 + 284d^5 - 575d^4 - 3763d^3\\
	& + 2677d^2 + 4641d + 2970).
\end{align*}
(Note that $F(1,r)=r^2-1$ and $F(2,r)=\frac{1}{2}r^2+\frac{3}{2}r+1$.) Let $d\geq 3$. It can be checked that the coefficients of $r$ and $r^2$ and the constant in $F(d,r)$ are negative. It follows that $F(d,r)$ has only one positive root.
Note that \( F(d,r)=0 \) if and only if \( c_3(M,\mathbf{1})=c_3(Q,\mathbf{1}) \) since \( \frac{1}{dr}(\mathbf{1}^TM^2\mathbf{1})(\mathbf{1}^TQ^2\mathbf{1})>0 \) for \( d,r\ge1 \). Plugging \( r=0.4d^2-1 \) and \( r=0.42d^2+2 \) into \( F(d,r) \), we obtain
\begin{align*}
	F(d,0.4d^2-1)
	=&-\frac{1}{945000}d(96d^9 + 4480d^8 + 17660d^7 - 193050d^6 + 435414d^5 +\\
	& 129255d^4 - 2490255d^3 + 1902425d^2 + 3428125d - 2629350),
\end{align*}
and
\begin{align*}
	F(d,0.42d^2+2)
	=&\frac{1}{472500000}(169344d^{10} - 3415835d^9 + 27443570d^8 - 85370100d^7\\& + 338305446d^6 - 912780225d^5 + 2072349300d^4 - 3997675000d^3 \\&+ 4998385000d^2 - 1712137500d + 1569375000).
\end{align*}
It is easy to see that $F(d,0.4d^2-1)<0$ and $F(d,0.42d^2+2)>0$. By the intermediate value theorem, there exists a real number $r_0$ in the open interval \( (0.4d^2-1,0.42d^2+2) \) such that \( c_3(M,\mathbf{1})=c_3(Q,\mathbf{1}) \). Furthermore, $c_3(M,\mathbf{1})<c_3(Q,\mathbf{1})$ for $r<r_0$ and $c_3(M,\mathbf{1})>c_3(Q,\mathbf{1})$ for $r>r_0$.

\section*{Acknowledgements}

The authors are grateful to Steve Kirkland at the University of Manitoba and Donghyun Kim at Sungkyunkwan University for their valuable comments.


\begin{thebibliography}{10}

\bibitem{abreu2017characteristic}
N.~Abreu, E.~Fritscher, C.~Justel, and S.~Kirkland.
\newblock On the characteristic set, centroid, and centre for a tree.
\newblock {\em Linear and Multilinear Algebra}, 65(10):2046--2063, 2017.

\bibitem{andrade2017combinatorial}
E.~Andrade and G.~Dahl.
\newblock Combinatorial perron values of trees and bottleneck matrices.
\newblock {\em Linear and Multilinear Algebra}, 65(12):2387--2405, 2017.

\bibitem{berman1994nonnegative}
A.~Berman and R.~J. Plemmons.
\newblock {\em Nonnegative Matrices in the Mathematical Sciences}.
\newblock SIAM, 1994.

\bibitem{brenti1989unimodal}
F.~Brenti.
\newblock {\em Unimodal Log-concave and P{\'o}lya Frequency Sequences in
  Combinatorics}.
\newblock Number 413. American Mathematical Soc., 1989.

\bibitem{brouwer2011spectra}
A.~E. Brouwer and W.~H. Haemers.
\newblock {\em Spectra of graphs}.
\newblock Springer Science \& Business Media, 2011.


\bibitem{chartrand1996graphs}
G.~Chartrand, L.~Lesniak, and P.~Zhang.
\newblock {\em Graphs \& Digraphs}, volume~22.

\bibitem{ciardo2021perron}
L.~Ciardo.
\newblock Perron value and moment of rooted trees.
\newblock {\em Linear Algebra and its Applications}, 635:69--94, 2022.

\bibitem{fallat1998extremizing}
S.~Fallat and S.~Kirkland.
\newblock Extremizing algebraic connectivity subject to graph theoretic
  constraints.
\newblock {\em The Electronic Journal of Linear Algebra}, 3:48--74, 1998.

\bibitem{fiedler1973algebraic}
M.~Fiedler.
\newblock Algebraic connectivity of graphs.
\newblock {\em Czechoslovak Mathematical Journal}, 23(2):298--305, 1973.

\bibitem{fiedler1975property}
M.~Fiedler.
\newblock A property of eigenvectors of nonnegative symmetric matrices and its
  application to graph theory.
\newblock {\em Czechoslovak Mathematical Journal}, 25(4):619--633, 1975.

\bibitem{gross2015log}
J.~L. Gross, T.~Mansour, T.~W. Tucker, and D.~G. Wang.
\newblock Log-concavity of combinations of sequences and applications to genus
  distributions.
\newblock {\em SIAM Journal on Discrete Mathematics}, 29(2):1002--1029, 2015.

\bibitem{hogben2013handbook}
L.~Hogben.
\newblock {\em Handbook of Linear Algebra}.
\newblock CRC press, 2013.

\bibitem{horn2012matrix}
R.~A. Horn and C.~R. Johnson.
\newblock {\em Matrix Analysis}.
\newblock Cambridge university press, 2012.

\bibitem{huh2018combinatorial}
J.~Huh.
\newblock Combinatorial applications of the hodge--riemann relations.
\newblock In {\em Proceedings of the International Congress of Mathematicians:
  Rio de Janeiro 2018}, pages 3093--3111. World Scientific, 2018.

\bibitem{kirkland1998perron}
S.~Kirkland and S.~Fallat.
\newblock Perron components and algebraic connectivity for weighted graphs.
\newblock {\em Linear and Multilinear Algebra}, 44(2):131--148, 1998.

\bibitem{kirkland1997algebraic}
S.~Kirkland and M.~Neumann.
\newblock Algebraic connectivity of weighted trees under perturbation.
\newblock {\em Linear and Multilinear Algebra}, 42(3):187--203, 1997.

\bibitem{kirkland1996characteristic}
S.~Kirkland, M.~Neumann, and B.~L. Shader.
\newblock Characteristic vertices of weighted trees via perron values.
\newblock {\em Linear and Multilinear Algebra}, 40(4):311--325, 1996.

\bibitem{kirkland1997distances}
S.~J. Kirkland, M.~Neumann, and B.~L. Shader.
\newblock Distances in weighted trees and group inverse of laplacian matrices.
\newblock {\em SIAM Journal on Matrix Analysis and Applications},
  18(4):827--841, 1997.

\bibitem{liu2007log}
L.~L. Liu and Y.~Wang.
\newblock On the log-convexity of combinatorial sequences.
\newblock {\em Advances in Applied Mathematics}, 39(4):453--476, 2007.

\bibitem{merris1987characteristic}
R.~Merris.
\newblock Characteristic vertices of trees.
\newblock {\em Linear and Multilinear Algebra}, 22(2):115--131, 1987.


\bibitem{molitierno2016applications}
J.~J. Molitierno.
\newblock {\em Applications of Combinatorial Matrix Theory to Laplacian
  Matrices of Graphs}.
\newblock CRC Press, 2016.

\bibitem{molitierno2018tight}
J.~J. Molitierno.
\newblock A tight upper bound on the spectral radius of bottleneck matrices for
  graphs.
\newblock {\em Linear Algebra and its Applications}, 551:1--17, 2018.

\bibitem{sagan1992log}
B.~E. Sagan.
\newblock Log concave sequences of symmetric functions and analogs of the
  jacobi-trudi determinants.
\newblock {\em Transactions of the American Mathematical Society},
  329(2):795--811, 1992.

\bibitem{seneta2006non}
E.~Seneta.
\newblock {\em Non-negative Matrices and Markov Chains}.
\newblock Springer Science \& Business Media, 2006.

\bibitem{sewell2014computational}
G.~Sewell.
\newblock {\em Computational Methods of Linear Algebra}.
\newblock World Scientific Publishing Company, 2014.

\bibitem{sivakumar2018semipositive}
K.~Sivakumar and M.~Tsatsomeros.
\newblock Semipositive matrices and their semipositive cones.
\newblock {\em Positivity}, 22(1):379--398, 2018.

\bibitem{OEIS}
N.~J.~A. Sloane, editor.
\newblock The On-Line Encyclopedia of Integer Sequences.
\newblock published electronically at https://oeis.org.

\bibitem{stanley1989log}
R.~P. Stanley.
\newblock Log-concave and unimodal sequences in algebra, combinatorics, and
  geometry.
\newblock {\em Ann. New York Acad. Sci}, 576(1):500--535, 1989.

\bibitem{szyld1992sequence}
D.~B. Szyld.
\newblock A sequence of lower bounds for the spectral radius of nonnegative
  matrices.
\newblock {\em Linear Algebra and its Applications}, 174:239--242, 1992.

\bibitem{tacscci1998sequence}
D.~Ta{\c{s}}{\c{c}}i and S.~Kirkland.
\newblock A sequence of upper bounds for the perron root of a nonnegative
  matrix.
\newblock {\em Linear Algebra and its Applications}, 273(1-3):23--28, 1998.

\bibitem{tsatsomeros2016geometric}
M.~Tsatsomeros.
\newblock Geometric mapping properties of semipositive matrices.
\newblock {\em Linear Algebra and its Applications}, 498:349--359, 2016.

\bibitem{varga1962iterative}
R.~S. Varga.
\newblock {\em Iterative Analysis}.
\newblock Springer, 1962.

\end{thebibliography}
\end{document}